\pdfoutput=1
\documentclass[a4paper, reqno]{amsart}
\usepackage[utf8]{inputenc}
\usepackage{graphicx}

\usepackage[UKenglish]{babel}

\DeclareSymbolFont{symbolsC}{U}{pxsyc}{m}{n}
\DeclareMathSymbol{\coloneqq}{\mathrel}{symbolsC}{"42}

\usepackage{comment}

\usepackage{pythonhighlight}

\usepackage{csquotes} 
 
\usepackage{float}

\usepackage[
backend=bibtex,
style=alphabetic,
sorting=nyt
]{biblatex}
\addbibresource{bib2.bib}
 
\title{On the number of Enriques quotients for supersingular K3 surfaces}
\author{Kai Behrens}

\setlength{\hoffset}{-1cm}
\setlength{\voffset}{-0.5cm}
\addtolength{\textwidth}{1,5cm}
\addtolength{\textheight}{0.5cm}

\usepackage{amsthm,amsfonts,amssymb,amsmath}
\usepackage[all]{xy}
\usepackage{xr-hyper}
\usepackage{verbatim}
\usepackage{pdfsync}
\usepackage{pigpen}
\usepackage{stmaryrd}
%power series brackets
\usepackage{mathtools}
%because some things just look better

\newcommand{\CA}{{\mathcal {A}}}

\newcommand{\CC}{{\mathcal {C}}}

\newcommand{\CE}{{\mathcal {E}}}

\newcommand{\CM}{{\mathcal {M}}}

\newcommand{\CO}{{\mathcal {O}}}

\newcommand{\CX}{{\mathcal {X}}}
\newcommand{\CY}{{\mathcal {Y}}}

\newcommand{\Aut}{{\mathrm{Aut}}}

\newcommand{\disc}{{\mathrm{disc}}}

\newcommand{\lra}{\longrightarrow}

\newcommand{\po}{\ar@{}[dr]|{\text{\pigpenfont R}}}
\newcommand{\pb}{\ar@{}[dr]|{\text{\pigpenfont J}}}

%\newtheorem{thm}{Theorem}[section]
%\newtheorem{cor}[thm]{Corollary}
%\newtheorem{lem}[thm]{Lemma}
%\newtheorem{prop}[thm]{Proposition}
%\newtheorem {conj}[thm]{Conjecture}
%\newtheorem{defn}[thm]{Definition}

% Equation  \AMSname
% Theorem   \theoremname

% Theorem environments.
%
\newtheorem{theorem}{Theorem}[section]
\newtheorem{proposition}[theorem]{Proposition}
\newtheorem{lemma}[theorem]{Lemma}

\newtheorem{corollary}[theorem]{Corollary}

\newtheorem*{theorem*}{Theorem}
\newtheorem*{proposition*}{Proposition}

\theoremstyle{definition}
\newtheorem{definition}[theorem]{Definition}

\newtheorem{remark}[theorem]{Remark}

\numberwithin{equation}{section}

%\tikz
\usepackage{tikz}
%%%<
\usepackage{verbatim}
%%%>
\begin{comment}
:Title: Chains with labeled edges
:Tags: Matrices, Chains,Styles
:Author: Stefan Kottwitz
:Slug: labeled-chain
http://texblog.net/latex-archive/maths/pgf-tikz-commutative-diagram/

The chains library is very useful for writing exact sequences. Yet there's no feature
for labeling the edges of a chain. Besides arrows, we might need to write symbols for
maps over, under, or just next to it.

This example shows a way by modifying the join method of the chains library.
Its original syntax is join=with<node> by <options>, here the syntax is changed
to join={node[options] {label}}.

Furthermore, we define a style for the labels, so that all are in math mode
and typeset in scriptstyle.
\end{comment}
\usetikzlibrary{arrows,chains,matrix,positioning,scopes}
\makeatletter
\tikzset{join/.code=\tikzset{after node path={%
\ifx\tikzchainprevious\pgfutil@empty\else(\tikzchainprevious)%
edge[every join]#1(\tikzchaincurrent)\fi}}}
\makeatother
\tikzset{>=stealth',every on chain/.append style={join},
         every join/.style={->}}
\tikzstyle{labeled}=[execute at begin node=$\scriptstyle,
   execute at end node=$]
%\end{tikz}

% \widebar command
\makeatletter
\let\save@mathaccent\mathaccent
\newcommand*\if@single[3]{%
  \setbox0\hbox{${\mathaccent"0362{#1}}^H$}%
  \setbox2\hbox{${\mathaccent"0362{\kern0pt#1}}^H$}%
  \ifdim\ht0=\ht2 #3\else #2\fi
  }
%The bar will be moved to the right by a half of \macc@kerna, which is computed by amsmath:
\newcommand*\rel@kern[1]{\kern#1\dimexpr\macc@kerna}
%If there's a superscript following the bar, then no negative kern may follow the bar;
%an additional {} makes sure that the superscript is high enough in this case:
\newcommand*\widebar[1]{\@ifnextchar^{{\wide@bar{#1}{0}}}{\wide@bar{#1}{1}}}
%Use a separate algorithm for single symbols:
\newcommand*\wide@bar[2]{\if@single{#1}{\wide@bar@{#1}{#2}{1}}{\wide@bar@{#1}{#2}{2}}}
\newcommand*\wide@bar@[3]{%
  \begingroup
  \def\mathaccent##1##2{%
%Enable nesting of accents:
    \let\mathaccent\save@mathaccent
%If there's more than a single symbol, use the first character instead (see below):
    \if#32 \let\macc@nucleus\first@char \fi
%Determine the italic correction:
    \setbox\z@\hbox{$\macc@style{\macc@nucleus}_{}$}%
    \setbox\tw@\hbox{$\macc@style{\macc@nucleus}{}_{}$}%
    \dimen@\wd\tw@
    \advance\dimen@-\wd\z@
%Now \dimen@ is the italic correction of the symbol.
    \divide\dimen@ 3
    \@tempdima\wd\tw@
    \advance\@tempdima-\scriptspace
%Now \@tempdima is the width of the symbol.
    \divide\@tempdima 10
    \advance\dimen@-\@tempdima
%Now \dimen@ = (italic correction / 3) - (Breite / 10)
    \ifdim\dimen@>\z@ \dimen@0pt\fi
%The bar will be shortened in the case \dimen@<0 !
    \rel@kern{0.6}\kern-\dimen@
    \if#31
      \overline{\rel@kern{-0.6}\kern\dimen@\macc@nucleus\rel@kern{0.4}\kern\dimen@}%
      \advance\dimen@0.4\dimexpr\macc@kerna
%Place the combined final kern (-\dimen@) if it is >0 or if a superscript follows:
      \let\final@kern#2%
      \ifdim\dimen@<\z@ \let\final@kern1\fi
      \if\final@kern1 \kern-\dimen@\fi
    \else
      \overline{\rel@kern{-0.6}\kern\dimen@#1}%
    \fi
  }%
  \macc@depth\@ne
  \let\math@bgroup\@empty \let\math@egroup\macc@set@skewchar
  \mathsurround\z@ \frozen@everymath{\mathgroup\macc@group\relax}%
  \macc@set@skewchar\relax
  \let\mathaccentV\macc@nested@a
%The following initialises \macc@kerna and calls \mathaccent:
  \if#31
    \macc@nested@a\relax111{#1}%
  \else
%If the argument consists of more than one symbol, and if the first token is
%a letter, use that letter for the computations:
    \def\gobble@till@marker##1\endmarker{}%
    \futurelet\first@char\gobble@till@marker#1\endmarker
    \ifcat\noexpand\first@char A\else
      \def\first@char{}%
    \fi
    \macc@nested@a\relax111{\first@char}%
  \fi
  \endgroup
}
\makeatother

%end of \widebar command

\setcounter{tocdepth}{1}
\setcounter{MaxMatrixCols}{20}
\setcounter{section}{0}
\setcounter{biburllcpenalty}{7000}
\setcounter{biburlucpenalty}{8000}
\begin{document}
\begin{abstract} We show that most classes of K3 surfaces have only finitely many Enriques quotients up to isomorphism. For supersingular K3 surfaces over fields of characteristic $p \geq 3$, we give a formula which generically yields the number of their Enriques quotients. We reprove via a lattice theoretic argument that supersingular K3 surfaces always have an Enriques quotient over fields of small characteristic. For some small characteristics and some Artin invariants, we explicitly compute lower bounds for the number of Enriques quotients of a supersingular K3 surface. We show that the supersingular K3 surface of Artin invariant $1$ over an algebraically closed field of characteristic $3$ has exactly two Enriques quotients.
\end{abstract}
\maketitle
\section*{Introduction}
If $X$ is a K3 surface over an arbitrary field $k$ and $\iota \colon X \lra X$ is an involution without fixed points, then the quotient variety $X/ \langle \iota \rangle$ is an Enriques surface. For any Enriques surface $Y$ over a field of characteristic $p \neq 2$ there exists (up to isomorphism) a unique K3 surface $X$ such that $Y$ is isomorphic to such a quotient $X / \langle \iota \rangle$. In other words, any Enriques surface has a unique K3 cover. We may now ask, given a K3 surface $X$, how many isomorphism classes of Enriques surfaces $Y$ there are, such that there exists a fixed point free involution $\iota \colon X \rightarrow X$ and an isomorphism $Y \cong X/ \langle \iota \rangle$.

For complex K3 surfaces, there exists a Torelli theorem in terms of Hodge cohomology \cite{MR0284440}, \cite{MR0447635}. If $Y$ is an Enriques surface, then its Neron-Severi group $\mathrm{NS}(Y)$ is isomorphic to the quadratic form $\Gamma'= \Gamma \oplus \mathbb{Z}/2\mathbb{Z}$ with $\Gamma = U_2 \oplus E_8(-1)$.  By the Torelli theorem for complex K3 surfaces, fixed point free involutions of a K3 surface $X$ can then be characterized in terms of primitive embeddings $\Gamma(2) \hookrightarrow \mathrm{NS}(X)$ without vectors of self-intersection $-2$ in the complement of $\Gamma(2)$. Denoting the set of all such embeddings by $\mathfrak{M}$, Ohashi \cite{MR2319542} used this connection to prove the following formula, which yields an upper bound for the number of isomorphism classes of Enriques quotients of any complex K3 surface and is an equality for generic K3 surfaces. 
\begin{theorem*}\cite[Theorem 2.3]{MR2319542} Let $X$ be a complex K3 surface. By $q_{\mathrm{NS}(X)}$ we denote the discriminant form of the Neron-Severi group of $X$. Let $M_1, \ldots, M_k \in \mathfrak{M}$ be a complete set of representatives for the action of $O(\mathrm{NS}(X))$ on $\mathfrak{M}$. For each $j \in \{1, \ldots, k\}$, we let
\begin{align*} K^{(j)}=\{\psi \in O(\mathrm{NS}(X)) \mid \psi(M_j)=M_j \}\end{align*}
be the stabilizer of $M_j$ and $\mathrm{pr}(K^{(j)})$ be its canonical image in $O(q_{\mathrm{NS}(X)})$. Then we have an inequality
\begin{align*} \#\left\{\text{Enriques quotients of } X \right\} \leq \sum_{j=1}^k \#\left(O\left(q_{\mathrm{NS}(X)}\right)/\mathrm{pr}\left(K^{(j)}\right)\right). \end{align*}
If $X$ is such that the canonical morphism $\psi \colon O(\mathrm{NS}(X)) \rightarrow O\left(q_{\mathrm{NS}(X)}\right)$ is surjective and for each automorphism $\theta \in \mathrm{Aut}(X)$ the induced automorphism on the quotient $\mathrm{NS}(X)^{\vee}/\mathrm{NS}(X)$ is either the identity or multiplication by $-1$, then the inequality above becomes an equality.\end{theorem*}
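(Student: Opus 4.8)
The plan is to recast everything through the global Torelli theorem and reduce the count to an orbit problem for the discriminant form. First I would recall the standard dictionary: a fixed point free involution $\iota$ of $X$ satisfies $\iota^*\omega_X=-\omega_X$, so its invariant lattice $M:=H^2(X,\BZ)^{\iota}\cong\Gamma(2)$ lies inside $\NS(X)$, and conversely -- this is the Horikawa--Namikawa correspondence underlying Ohashi's $\mathfrak{M}$ -- every $M\in\mathfrak{M}$ determines the lattice involution that is $+1$ on $M$ and $-1$ on its orthogonal complement in $\Lambda:=H^2(X,\BZ)$. The fact that the discriminant group of $\Gamma(2)$ is $2$-torsion guarantees that this is an isometry of $\Lambda$, and the absence of $(-2)$-classes in the complement guarantees it is realised by a genuine fixed point free involution. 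Under this dictionary two involutions give isomorphic Enriques quotients precisely when they are conjugate in $\Aut(X)$, i.e.\ when the corresponding members of $\mathfrak{M}$ lie in one orbit of $\rho(\Aut(X))\subseteq O(\NS(X))$, where $\rho$ is the representation on $\NS(X)$. Hence
\begin{align*}\#\{\text{Enriques quotients of }X\}=\sum_{j=1}^{k}\#\bigl(\rho(\Aut(X))\backslash O(\NS(X))/K^{(j)}\bigr),\end{align*}
since the $\rho(\Aut(X))$-orbits inside the $j$-th $O(\NS(X))$-orbit $\cong O(\NS(X))/K^{(j)}$ are exactly the indicated double cosets.

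Next I would pass to the discriminant form via $\mathrm{pr}\colon O(\NS(X))\to O(q_{\NS(X)})$ and attach to each quotient of type $j$ the class $\mathrm{pr}(g)\,\mathrm{pr}(K^{(j)})$, where $g(M_j)$ is the invariant lattice in question. This datum is well defined modulo $K^{(j)}$ on the right and modulo $\rho(\Aut(X))$ on the left, so it produces a map from the quotients of type $j$ into $\mathrm{pr}(\rho(\Aut(X)))\backslash O(q_{\NS(X)})/\mathrm{pr}(K^{(j)})$, whose cardinality is at most $\#(O(q_{\NS(X)})/\mathrm{pr}(K^{(j)}))$. Summing over $j$ would then yield the desired inequality, provided this map is injective.

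The key lemma, and the step I expect to be the main obstacle, is precisely this injectivity: I must show that isometries in $G_0:=\Ker(\mathrm{pr})$ do not separate Enriques quotients, i.e.\ that for $h\in G_0$ the lattices $M$ and $h(M)$ always lie in one $\rho(\Aut(X))$-orbit. By Nikulin's gluing criterion and $q_{\NS(X)}\cong -q_{T(X)}$, any $h\in G_0$ extends to a Hodge isometry of $\Lambda$ acting trivially on $T(X)$, hence fixing the period and conjugating $\iota_M$ to $\iota_{h(M)}$. The difficulty is that such an extension need not preserve the ample cone, so the real work is to correct it by an element of the Weyl group $W(\NS(X))$ while remaining inside the class of automorphism-induced isometries. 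This ample-cone/Weyl-group bookkeeping -- controlling exactly which lattice isometries come from automorphisms, given that $W(\NS(X))\subseteq G_0$ acts trivially on $q_{\NS(X)}$ but not on the chamber structure -- is the crux, and it is where the Torelli theorem is genuinely used.

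Finally I would upgrade the inequality to an equality under the two stated hypotheses. Surjectivity of $O(\NS(X))\to O(q_{\NS(X)})$ makes the image of $\mathrm{pr}$ all of $O(q_{\NS(X)})$, so every coset is realised by an honest isometry of $\NS(X)$ and hence, via the correspondence, by an Enriques quotient, forcing the map above to be surjective. For the left action, note that $-\mathrm{id}_{\NS(X)}$ stabilises each $M_j$, so $-\mathrm{id}\in\mathrm{pr}(K^{(j)})$; the hypothesis that every $\theta\in\Aut(X)$ acts on $\NS(X)^{\vee}/\NS(X)$ as $\pm\mathrm{id}$ then gives $\mathrm{pr}(\rho(\Aut(X)))\subseteq\{\pm\mathrm{id}\}\subseteq\mathrm{pr}(K^{(j)})$, and since $\pm\mathrm{id}$ is central in $O(q_{\NS(X)})$ this left action is trivial. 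Consequently the target is exactly $O(q_{\NS(X)})/\mathrm{pr}(K^{(j)})$ and the map is a bijection, yielding the claimed equality.
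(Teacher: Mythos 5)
Your proposal is not compared here against a proof in this paper, because the paper never proves this statement: it is quoted from Ohashi \cite[Theorem 2.3]{MR2319542}, and the paper's own Theorem \ref{form} is later proved by declaring that Ohashi's argument carries over word for word. Measured against that argument, your skeleton (Torelli dictionary, orbit count, passage to $O(q_{\mathrm{NS}(X)})$, and the $\pm\mathrm{id}$ discussion in the equality case) is the right one, but there is a genuine error at the centre, not just the gap you flag. The Torelli correspondence is not between free involutions and $\mathfrak{M}$ but between free involutions and the subset $\mathfrak{M}^{\ast}\subseteq\mathfrak{M}$ of sublattices containing an ample class: the invariant lattice of an involution $\iota$ contains the ample class $D+\iota^{\ast}D$, and conversely the lattice involution attached to $M\in\mathfrak{M}$ is a Hodge isometry but is induced by an automorphism only if it preserves the ample cone, i.e.\ only if $M$ contains an ample class. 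Consequently your displayed identity $\#\{\text{Enriques quotients}\}=\sum_{j}\#\bigl(\rho(\mathrm{Aut}(X))\backslash O(\mathrm{NS}(X))/K^{(j)}\bigr)$ is false: its right-hand side counts $\rho(\mathrm{Aut}(X))$-orbits on all of $\mathfrak{M}$, which overcounts. For the same reason your key lemma is not merely the hard step, it is false as stated: $\rho(\mathrm{Aut}(X))$ preserves $\mathfrak{M}^{\ast}$, and two elements of $\mathfrak{M}^{\ast}$ related by an element of $W$ are in fact equal (write $O^{+}(\mathrm{NS}(X))=W\rtimes A$ with $A$ the stabilizer of the ample chamber; the two associated involutions lie in $A$, are $W$-conjugate, hence differ by an element of $W\cap A=\{1\}$). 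So whenever $w\in W\subseteq G_0$ moves some $M\in\mathfrak{M}^{\ast}$, the lattices $M$ and $w(M)$ lie in \emph{distinct} $\rho(\mathrm{Aut}(X))$-orbits, and no amount of ample-cone bookkeeping can prove the lemma.

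The correct repair, which is exactly what Ohashi does, is to enlarge the group on the lattice side: set $\Gamma=\{\pm 1\}\cdot W\cdot\rho(\mathrm{Aut}(X))$, a group since $W$ is normal in $O(\mathrm{NS}(X))$ and $-\mathrm{id}$ is central. Then (i) every element of $\mathfrak{M}$ is $W$-equivalent to an element of $\mathfrak{M}^{\ast}$, because condition (B) forces $M\otimes\mathbb{R}$ to meet the interior of some chamber; (ii) $\Gamma$-equivalent elements of $\mathfrak{M}^{\ast}$ are already $\rho(\mathrm{Aut}(X))$-equivalent by the rigidity statement above, so Enriques quotients correspond to $\mathfrak{M}/\Gamma$; and (iii) $G_0\subseteq\Gamma$: writing $h\in G_0$ as $\pm wa$ with $w\in W$, $a\in A$, one gets $\mathrm{pr}(a)=\pm\mathrm{id}$, so $a$ glues with $\pm\mathrm{id}_{T(X)}$ (a Hodge isometry) to a Hodge isometry of $H^2(X,\mathbb{Z})$ preserving the ample chamber, whence $a\in\rho(\mathrm{Aut}(X))$ by Torelli. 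Point (iii) is the true content of your ``key lemma'': it makes the map $\mathfrak{M}/\Gamma\rightarrow\bigsqcup_{j}\mathrm{pr}(\Gamma)\backslash O(q_{\mathrm{NS}(X)})/\mathrm{pr}(K^{(j)})$ injective, which gives the inequality; your equality-case reasoning then goes through essentially verbatim, since $\mathrm{pr}(\Gamma)=\{\pm\mathrm{id}\}\cdot\mathrm{pr}(\rho(\mathrm{Aut}(X)))$ and $-\mathrm{id}\in\mathrm{pr}(K^{(j)})$. In short: your ingredients are the right ones, but the group you quotient by must contain $W$ and $-\mathrm{id}$, not just $\rho(\mathrm{Aut}(X))$; as written, the central step of your proposal cannot be completed.
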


In particular, it follows from the theorem above that the number of Enriques quotients of a complex K3 surface is finite.

We now want to understand the situation for K3 surfaces over fields of positive characteristic. Some of our results might already be known to the experts, but we could not find them in the literature. 

In Section \ref{lift} of this article we observe that the following statement follows directly from results of Lieblich and Maulik \cite{2011arXiv1102.3377L}.

\begin{theorem*}[see Theorem \ref{finht}] Let $X$ be a K3 surface over an algebraically closed field $k$. If $X$ is of finite height, then the number of isomorphism classes of Enriques quotients of $X$ is finite. \end{theorem*}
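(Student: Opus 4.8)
The plan is to convert the problem into a question about conjugacy classes of involutions in $\Aut(X)$ and then to harvest finiteness from the cone conjecture of Lieblich and Maulik \cite{2011arXiv1102.3377L}. First I would record the elementary observation that if $\iota_1, \iota_2 \in \Aut(X)$ are fixed-point-free involutions that are conjugate, say $\iota_2 = \phi \iota_1 \phi^{-1}$ with $\phi \in \Aut(X)$, then $\phi$ carries $\iota_1$-orbits to $\iota_2$-orbits and hence descends to an isomorphism $X/\langle \iota_1 \rangle \xrightarrow{\sim} X/\langle \iota_2 \rangle$. Consequently the number of isomorphism classes of Enriques quotients of $X$ is bounded above by the number of $\Aut(X)$-conjugacy classes of fixed-point-free involutions in $\Aut(X)$, and it suffices to prove that this number is finite.

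Next I would pass to the action on the Néron–Severi lattice via the natural representation $\rho \colon \Aut(X) \to O(\NS(X))$. Its kernel acts trivially on $\NS(X)$ and in particular fixes an ample class $H$, so $\ker \rho \subseteq \Aut(X,H)$; since the automorphism group of a polarised K3 surface is finite, $\ker \rho$ is finite. This reduces the problem to bounding the number of conjugacy classes of involutions in the image $G := \rho(\Aut(X))$: if two involutions of $\Aut(X)$ have $G$-conjugate images, I can conjugate one of them so that the images coincide exactly, and then the two involutions differ by an element of the finite set $\ker \rho$, so each $G$-conjugacy class of images lifts to only finitely many conjugacy classes of involutions in $\Aut(X)$.

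The heart of the argument is the finiteness of conjugacy classes of involutions in $G$. Every automorphism preserves ampleness, so $G$ stabilises the nef cone and acts on the effective nef cone $\mathrm{Nef}^+(X)$, the convex hull of the rational nef classes. Because $X$ has finite height, the cone conjecture of Lieblich–Maulik \cite{2011arXiv1102.3377L} furnishes a rational polyhedral fundamental domain $\Pi$ for this action. Given an involution $g \in G$, averaging an interior point of the positive cone over $\langle g \rangle$ produces a $g$-fixed class in the interior of $\mathrm{Nef}^+(X)$; moving it into $\Pi$ by a suitable $h \in G$ shows that $hgh^{-1}$ fixes a point of $\Pi$. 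The finite-order isometries in $G$ fixing a point of the rational polyhedral set $\Pi$ permute the finitely many facets and neighbouring chambers of $\Pi$ and are thereby determined by finite combinatorial data, so there are only finitely many conjugacy classes of such elements. Combining this with the two reductions above yields the theorem.

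The hard part will be this last finiteness statement, namely turning the rational polyhedral fundamental domain into an honest bound on conjugacy classes of finite-order elements. Care is needed because $G$ is in general \emph{not} of finite index in $O(\NS(X))$ — the Weyl group generated by reflections in $(-2)$-classes may be infinite — so one cannot simply invoke the finiteness of torsion conjugacy classes in an arithmetic group. Instead the argument must be carried out geometrically on the cone, using the $G$-translates of $\Pi$ to tile $\mathrm{Nef}^+(X)$ and pinning down an involution fixing an interior point of $\Pi$ by its action on the finitely many adjacent tiles.
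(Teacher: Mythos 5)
Your reduction steps are sound and match the paper's starting point: Enriques quotients are counted by $\Aut(X)$-conjugacy classes of fixed-point-free involutions (this is Proposition \ref{conj}), and passing to the image $G$ of $\Aut(X)$ in $O(\NS(X))$ costs only a finite factor since the kernel fixes an ample class and is therefore finite. The averaging step is also correct: every involution in $G$ is conjugate to one fixing an ample point of the fundamental domain $\Pi$. The genuine gap is exactly at the step you yourself flag as ``the hard part''. What you need is a Siegel property: the set $\left\{g \in G \mid g\Pi \cap \Pi \cap \CA_X \neq \emptyset\right\}$ is finite (granted this, any involution fixing an ample $x \in \Pi$ satisfies $x \in g\Pi \cap \Pi$, hence lies in this finite set, and you are done). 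This statement is true, but it does not follow formally from the existence of a rational polyhedral fundamental domain, and your justification --- that such an involution ``permutes the finitely many facets and neighbouring chambers of $\Pi$'' --- assumes precisely what must be proved, namely that only finitely many translates of $\Pi$ meet it in the relevant region. Note that without intersecting with the ample cone the claim is in general false: $\Pi$ contains rational isotropic boundary classes (elliptic fibrations), whose stabilizers in $G$ can be infinite, and every element $g$ of such a stabilizer satisfies $g\Pi \cap \Pi \neq \emptyset$. Even after restricting to interior points, the finiteness is a global statement --- $\Pi \cap \CA_X$ is non-compact even projectively, so local finiteness of the tiling at each point does not suffice. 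This Siegel-type finiteness is a theorem in its own right (for cones of this kind it is due to Looijenga, ``Discrete automorphism groups of convex cones of finite type''; for arithmetic groups it is classical reduction theory), and your proof is incomplete without proving or citing it.

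For comparison, the paper's proof sidesteps this geometric difficulty by routing the problem through the arithmetic group $O(\NS(X))$ rather than through the cone. It combines Proposition \ref{conj} with Ohashi's lattice-theoretic dictionary between Enriques involutions and primitive $\Gamma(2)$-sublattices \cite[Lemma 1.4(a),(c)]{MR2319542}, the classical theorem that an arithmetic group has only finitely many conjugacy classes of finite subgroups \cite[Theorem 4.3]{MR1278263} applied to $O(\NS(X))$, and the input from \cite[Theorem 6.1(1)]{2011arXiv1102.3377L} (which, as observed in the paper, also holds in characteristic $2$) relating $\Aut(X)$ to $O(\NS(X))$ up to finite error. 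There the hard finiteness is absorbed into reduction theory for arithmetic groups, where the Siegel property is already built into the cited results. Your approach can certainly be completed, but only by importing the analogous foundational result for the action on the nef cone; as written, the decisive step is a heuristic rather than a proof.
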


For many K3 surfaces of finite height, there exist special lifts to characteristic zero, which allow to compare their Enriques involutions. In particular, the situation for K3 surfaces of finite height should be very similar to the situation in characteristic zero and we refer to Remark \ref{near0} for details.

In view of these results, we then turn our focus towards Enriques quotients of (Shioda-) supersingular K3 surfaces over fields of characteristic $p \geq 3$. Ogus proved a Torelli-type theorem for supersingular K3 surfaces in terms of Crystalline cohomology \cite{MR717616} over fields of characteristic $p \geq 5$ and in light of recent results by Bragg and Lieblich \cite[Section 5.1]{2018arXiv180407282B} his proof also works over characteristic $p=3$, and we can therefore prove a formula for an upper bound of Enriques quotients of a supersingular K3 surface analogously to the results by Ohashi in the complex case. 
\begin{theorem*}[see Theorem \ref{form}]  Let $k$ be an algebraically closed field of characteristic $p \geq 3$ and let $X$ be a supersingular K3 surface over $k$. By $q_{\mathrm{NS}(X)}$ we denote the discriminant form of the Neron-Severi group of $X$. Let $M_1, \ldots, M_k \in \mathfrak{M}$ be a complete set of representatives for the action of $O(\mathrm{NS}(X))$ on $\mathfrak{M}$. For each $j \in \{1, \ldots, k\}$, we let
\begin{align*} K^{(j)}=\{\psi \in O(\mathrm{NS}(X)) \mid \psi(M_j)=M_j \}\end{align*}
be the stabilizer of $M_j$ and $\mathrm{pr}(K^{(j)})$ be its canonical image in $O(q_{\mathrm{NS}(X)})$. Then we have inequalities
\begin{align*} k \leq \#\left\{\text{Enriques quotients of } X \right\} \leq \sum_{j=1}^k \#\left(O\left(q_{\mathrm{NS}(X)}\right)/\mathrm{pr}\left(K^{(j)}\right)\right). \end{align*}
If $X$ is such that for each automorphism $\theta \in \mathrm{Aut}(X)$ the induced automorphism on the quotient $\mathrm{NS}(X)^{\vee}/\mathrm{NS}(X)$ is either the identity or multiplication by $-1$, then the inequality above becomes an equality on the right side. \end{theorem*}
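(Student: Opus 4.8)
The plan is to transport Ohashi's complex argument to the supersingular setting, using Ogus's crystalline Torelli theorem (extended to $p=3$ via Bragg--Lieblich) in place of the Hodge-theoretic Torelli theorem. Throughout I write $S = \mathrm{NS}(X)$ and $q = q_{\mathrm{NS}(X)}$, and I let $K$ denote the period, i.e. the characteristic subspace inside $\bar A \otimes_{\mathbb{F}_p} k$ where $\bar A = S^\vee/S$. First I would record the two lattice-theoretic inputs that make the supersingular case cleaner than the complex one. First, every fixed-point-free involution $\iota$ of $X$ restricts to an isometry of $S$ whose invariant sublattice is a primitive copy of $\Gamma(2)$ with no $(-2)$-classes in its orthogonal complement, hence an element $M_\iota \in \mathfrak{M}$; conversely each $M \in \mathfrak{M}$ determines the involution $\iota_M$ acting as $+1$ on $M$ and as $-1$ on $M^\perp$. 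Second, the projection $\mathrm{pr}\colon O(S) \to O(q)$ is surjective: since $S$ is indefinite of signature $(1,21)$ with $\mathrm{rank}(S) = 22 \geq 2\sigma_0 + 2 = \ell(q) + 2$, this is immediate from Nikulin's criterion, and it is precisely what lets us drop the surjectivity hypothesis that Ohashi had to impose.

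Next I would set up the counting map. Because $S$ has full rank $22$ in the K3 crystal, an isometry of $S$ determines its action on the crystal, so by Torelli an involution of $X$ is determined by $\iota|_S$; thus $\iota \mapsto M_\iota$ identifies Enriques involutions with the subset $\mathfrak{M}_0 \subseteq \mathfrak{M}$ of those $M$ for which $\iota_M$ preserves both the ample cone (up to the Weyl group $W$ generated by $(-2)$-reflections) and the period $K$. Two involutions give isomorphic quotients exactly when they are conjugate under $\mathrm{Aut}(X)$, and under Torelli $\mathrm{Aut}(X)$ maps onto $\{g \in O(S) : g \text{ effective}, \ g_* K = K\}$. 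Consequently
\begin{align*} \#\{\text{Enriques quotients of } X\} &= \#\bigl(\mathfrak{M}_0/\mathrm{Aut}(X)\bigr) \\ &= \sum_{j=1}^{k} \#\bigl(\{M \in O(S)\cdot M_j \cap \mathfrak{M}_0\}/\mathrm{Aut}(X)\bigr), \end{align*}
so it suffices to bound, orbit by orbit, the number of $\mathrm{Aut}(X)$-classes of period-preserving embeddings in each $O(S)\cdot M_j$.

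For the upper bound I would use the bijection $O(S)\cdot M_j \cong O(S)/K^{(j)}$ together with the surjection $O(S)/K^{(j)} \twoheadrightarrow O(q)/\mathrm{pr}(K^{(j)})$ sending $gM_j$ to $\mathrm{pr}(g)\,\mathrm{pr}(K^{(j)})$. The key point is that this class is a complete invariant of the $\mathrm{Aut}(X)$-orbit: if $M = gM_j$ and $M' = g'M_j$ induce the same coset, then $M' = hM$ for some $h \in \ker(\mathrm{pr})$, and since $(-2)$-reflections act trivially on $q$ one has $W \subseteq \ker(\mathrm{pr})$, so $h$ automatically fixes $K$ and, after correction by an element of $W$, is realized by an automorphism of $X$. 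Hence the fibre over $[M_j]$ injects into $O(q)/\mathrm{pr}(K^{(j)})$, giving $\#\{\text{quotients}\} \leq \sum_j \#(O(q)/\mathrm{pr}(K^{(j)}))$. When $\mathrm{Aut}(X)$ acts on $q$ only through $\pm 1$, no obstruction to the correction can occur and this map becomes a bijection, yielding equality on the right.

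For the lower bound $k \leq \#\{\text{quotients}\}$ I would show that the well-defined map $\{\text{quotients}\} \to \{O(S)\text{-orbits on }\mathfrak{M}\}$, $[\iota] \mapsto [M_\iota]$, is surjective; well-definedness is immediate because $\mathrm{Aut}(X)$ acts through $O(S)$, and surjectivity of a map onto a $k$-element set then forces at least $k$ quotients. The content is \emph{realizability}: using surjectivity of $\mathrm{pr}$, realizing the orbit of $M_j$ by a genuine Enriques involution reduces to finding some $O(q)$-translate of $K$ that is invariant under the involution $(\iota_{M_j})_*$ of $\bar A \otimes k$, i.e. bringing the period into standard position relative to the $\pm 1$-eigenspace decomposition, and then lifting by Torelli. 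I expect this existence step to be the main obstacle: it requires controlling the interaction of the period subspace with the involution induced on the discriminant, and it is exactly where the geometric input (that $X$ does admit a fixed-point-free involution, reproved in the paper by lattice theory) enters rather than formal group theory. The residual technical point is ensuring the lifted isometry can be chosen effective using only $W$, without recourse to $-1$, which for $p \geq 3$ does not act trivially on $q$.
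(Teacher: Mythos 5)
Your overall strategy is the same as the paper's: combine the bijection of Proposition \ref{bij} between free involutions and sublattices in $\mathfrak{M}$ containing an ample class (via Ogus's crystalline Torelli theorem, extended to $p=3$), the conjugacy criterion of Proposition \ref{conj}, and the surjectivity of $\mathrm{pr}$ from Lemma \ref{surNS}, and then run Ohashi's orbit count --- which is exactly what the paper does, citing Ohashi's proof verbatim for the last step. The genuine gap in your write-up sits at the one place where that counting argument does real work, namely your claim that $h \in \ker(\mathrm{pr})$ with $h(M)=M'$, ``after correction by an element of $W$, is realized by an automorphism of $X$,'' so that the fibre injects. Write the correction as $g = w h$ (possibly after composing with $-\mathrm{id}$) with $w \in W$ chosen so that $g$ preserves the ample cone; then $g$ is indeed induced by an automorphism, since its image in $O(q)$ is $\pm\mathrm{id}$ and hence preserves the period $K$, but $g(M) = w(M')$, \emph{not} $M'$. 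So you only learn that $M$ is $\mathrm{Aut}(X)$-equivalent to a Weyl translate of $M'$, and you still must prove: if $N$ and $w(N)$ both lie in $\mathfrak{M}^{\ast}$ for some $w \in W$, then they are $\mathrm{Aut}(X)$-equivalent. This is precisely the content of Ohashi's Lemma 1.4 (the paper invokes parts (a),(c) of it in the proof of Theorem \ref{finht}), and it is not formal: one shows that every $(-2)$-vector $l$ whose wall meets the positive cone of $N \otimes \mathbb{R}$ satisfies $\langle l, \iota_N(l) \rangle = 0$ --- here \emph{both} defining conditions of $\mathfrak{M}$ enter, since $\langle l, \iota_N(l)\rangle = 1$ would make $l + \iota_N(l)$ a $(-2)$-vector in $N \cong \Gamma(2)$, which has none, and $\langle l, \iota_N(l)\rangle = -1$ would make $l - \iota_N(l)$ a $(-2)$-vector in $N^{\perp}$, violating condition (B) --- whence $s_l s_{\iota_N(l)}$ lies in $W \cap \mathrm{Stab}(N)$ and acts on $N \otimes \mathbb{R}$ as the reflection in that wall; transitivity of the group these generate on the chambers cut out on $N \otimes \mathbb{R}$ then produces the required automorphism. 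Nothing in your proposal addresses this step, and without it neither the upper bound nor the equality statement is established.

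A secondary but symptomatic problem is that your emphasis is inverted. The ``main obstacle'' you identify --- bringing the period $K$ into a position invariant under $(\iota_{M_j})_{\ast}$ --- is vacuous: since $p \geq 3$, the discriminant group is $p$-elementary and is glued to the $p$-part of $A_{M^{\perp}}$, so every $\iota_M$ acts on $\mathrm{NS}(X)^{\vee}/\mathrm{NS}(X)$ as $-\mathrm{id}$, and $\pm\mathrm{id}$ (like every element of $\ker(\mathrm{pr})$) preserves \emph{every} subspace of $A_{\mathrm{NS}(X)} \otimes k$, in particular $K$. The actual content of the lower bound is what you dismiss as a ``residual technical point'': moving $M_j$ by the Weyl group until it contains an ample class, which uses condition (B) to ensure $M_j \otimes \mathbb{R}$ is not contained in the union of walls, so that Proposition \ref{bij} applies to the translate. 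Relatedly, your set $\mathfrak{M}_0$ (``$\iota_M$ preserves the ample cone up to $W$'') is not the right condition: read literally it holds for every $M \in \mathfrak{M}$, because $\iota_M$ always preserves the positive cone and hence maps the ample chamber to some $W$-translate; the correct condition is membership in $\mathfrak{M}^{\ast}$, i.e.\ that $M$ itself contains an ample class. Your worry about ``recourse to $-1$'' likewise dissolves, since $-\mathrm{id}$ fixes every sublattice setwise and preserves $K$.
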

It essentially follows from results of Nygaard \cite{Nygaard1980} that the formula yields an equality on the right side in the generic case.

We then turn towards applications. The following result is due to Jang \cite{MR3350105}.
\begin{theorem*}\cite[Corollary 2.4]{MR3350105} Let $X$ be a supersingular K3 surface over an algebraically closed field $k$ of characteristic $p \geq 3$. Then $X$ has an Enriques quotient if and only if the Artin invariant $\sigma$ of $X$ is at most $5$. \end{theorem*}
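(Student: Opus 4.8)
The plan is to reduce the existence of an Enriques quotient to a lattice-theoretic existence question, using the framework already set up for Theorem~\ref{form}. By that framework (which rests on the crystalline Torelli theorem, valid for $p \geq 3$), the surface $X$ admits an Enriques quotient precisely when $\mathfrak{M} \neq \emptyset$, i.e.\ when there is a primitive embedding $\Gamma(2) = U(2) \oplus E_8(-2) \hookrightarrow \NS(X)$ whose orthogonal complement $T$ carries no vector of self-intersection $-2$. Since $\Gamma(2)$ has signature $(1,9)$ and $\NS(X)$ has signature $(1,21)$, the complement $T$ is negative definite of rank $12$. Thus the whole statement becomes: such a root-free $T$ exists if and only if $\sigma \leq 5$.

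First I would determine the genus of $T$. The lattice $\Gamma(2)$ has discriminant group $(\BZ/2)^{10}$, whereas $\NS(X)$ is $p$-elementary with discriminant group $(\BZ/p)^{2\sigma}$ and $p$ odd. Hence in the gluing $\Gamma(2) \oplus T \hookrightarrow \NS(X)$ the entire $2$-torsion is absorbed and the full $p$-torsion originates from $T$; a direct computation with Nikulin's discriminant-form calculus gives $q_T \cong (-q_{\Gamma(2)}) \oplus q_{\NS(X)}$, so $T$ is a negative definite lattice of rank $12$ with discriminant group $(\BZ/2)^{10} \oplus (\BZ/p)^{2\sigma}$ and a completely prescribed discriminant form.

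For the implication ``Enriques quotient $\Rightarrow \sigma \leq 5$'' I would feed this genus into Nikulin's existence theorem for even lattices with given signature and discriminant form. Applied at the prime $p$, the length bound $\ell(q_T) \leq \rank T = 12$ forces $2\sigma \leq 12$, hence $\sigma \leq 6$. The borderline $\sigma = 6$ is where the argument is actually decided: here the $p$-length equals the rank, so Nikulin's criterion imposes the equality-case condition at $p$, a constraint on the square class of $\disc q_{\NS(X)}$ at $p$. I expect this condition to fail for the specific determinant character of the supersingular discriminant form $q_{\NS(X)}$ (as computed by Ogus and Nygaard), so that no negative definite rank-$12$ lattice with the prescribed form exists. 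Consequently $\mathfrak{M} = \emptyset$ for $\sigma \geq 6$, and $X$ has no Enriques quotient.

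For the converse, with $\sigma \leq 5$ the inequality $2\sigma < 12$ is strict, the equality-case condition disappears, and Nikulin's theorem yields a lattice in the required genus; gluing it to $\Gamma(2)$ produces the primitive embedding, and the Torelli theorem turns the resulting lattice involution into a fixed-point-free automorphism provided the complement has no roots. The main obstacle is therefore concentrated at the two ends of the range. On the non-existence side it is the precise evaluation of the determinant (Hasse) invariant of $q_{\NS(X)}$ at $p$, uniformly in $p$, which is exactly what separates $\sigma = 5$ from $\sigma = 6$. On the existence side it is ensuring, for every $\sigma \in \{1,\dots,5\}$ and every $p \geq 3$, that the genus of $T$ actually contains a root-free representative: the lattice handed back by the existence theorem may well carry $(-2)$-vectors, and eliminating them --- by moving to a neighbour in the genus or by an explicit construction assembled from known root-free definite lattices --- is the delicate part rather than any routine computation.
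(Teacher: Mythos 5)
Your opening reduction is exactly the paper's: by Proposition \ref{bij} together with Nikulin's Proposition 1.15.1 (where coprimality of $2^{10}$ and $p^{2\sigma}$ forces the glue $\gamma$ to vanish), $X$ has an Enriques quotient if and only if the genus $(0,12,\delta_{p,\sigma})$ of negative definite even rank-$12$ lattices with discriminant form $-q_{\Gamma(2)}\oplus q_{\NS(X)}$ contains a lattice with no vector of square $-2$. From that point on, however, your proposal names the two substantive steps without carrying out either one, and they are precisely where the content of the theorem lies. For the direction $\sigma\leq 5$, Nikulin's existence theorem only produces \emph{some} lattice in the genus, which in general does contain $(-2)$-vectors; the claim that a \emph{root-free} representative exists for every pair $(p,\sigma)$ with $p\geq 3$, $\sigma\leq 5$ does not follow from any general theorem you invoke, and ``moving to a neighbour in the genus or an explicit construction'' is a description of the task, not a proof. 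This is exactly the gap that Jang's lattice-theoretic argument leaves open: his construction via transcendental lattices of complex K3 surfaces works only for large $p$ ($p=11$ or $p\geq 19$ for $\sigma\in\{3,5\}$; $p=19$ or $p\geq 29$ for $\sigma\in\{2,4\}$), and the paper closes the remaining $24$ cases by explicit computation, using Kneser's neighbour method in \textsc{magma} to exhibit root-free lattices of genus $(0,12,\delta_{p,\sigma})$, recorded as Gram matrices. Without these constructions (or a substitute), your existence direction is simply not established.

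For the direction $\sigma\geq 6$, your length bound correctly rules out $\sigma\geq 7$, but the decisive case $\sigma=6$ is left as ``I expect this condition to fail,'' which is not an argument. It can in fact be completed along the lines you suggest: for $\sigma=6$ the complement would be a definite even lattice $M$ of rank $12$ whose $p$-adic Jordan decomposition is entirely $p$-scaled, so $\det M = 2^{10}p^{12}$ forces the associated $12$-dimensional quadratic space over $\BF_p$ to have square discriminant, hence to be neutral; this contradicts the non-neutrality of the form on $pN^{\vee}/pN$ for a supersingular K3 lattice $N$ (Ogus, Rudakov--Shafarevich). The paper does not rederive this either --- it cites Jang's Proposition 3.5 --- but your version, as written, asserts the key computation rather than performing it. In summary: the framework and the first reduction coincide with the paper's, but both halves of the actual proof are missing, and in this theorem the proof \emph{is} those two halves.
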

The proof of the above proposition uses lifting to characteristic zero. In an earlier article \cite{2013arXiv1301.1118J} Jang proved the following weaker version of the proposition via a lattice theoretic argument.
\begin{proposition*}\cite[Theorem 4.5, Proposition 3.5]{2013arXiv1301.1118J} Let $k$ be an algebraically closed field of characteristic $p$ and let $X$ be a supersingular K3 surface of Artin invariant $\sigma$. If $\sigma=1$, then $X$ has an Enriques involution. If $\sigma \in \{3,5\}$, and $p=11$ or $p \geq 19$, then $X$ has an Enriques involution. If $\sigma \in \{2,4\}$, and $p = 19$ or $p \geq 29$, then $X$ has an Enriques involution. If $\sigma \geq 6$, then $X$ has no Enriques involution. \end{proposition*}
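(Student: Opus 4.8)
The plan is to convert the existence of an Enriques involution into a purely lattice-theoretic question about the supersingular Néron--Severi lattice $N := \mathrm{NS}(X)$ and then to settle that question by a case analysis in $\sigma$ and $p$.

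\emph{Reduction via Torelli.} First I would invoke the supersingular Torelli theorem of Ogus (extended to $p=3$ as noted above), exactly as in the proof of Theorem \ref{form}, together with the lattice criterion for Enriques involutions: $X$ admits an Enriques involution if and only if there is a primitive embedding $\Gamma(2) = U(2) \oplus E_8(-2) \hookrightarrow N$ whose orthogonal complement contains no vector of self-intersection $-2$. The relevant involution is the isometry acting as $+1$ on $\Gamma(2)$ and as $-1$ on its complement; the invariant lattice being isometric to $\Gamma(2)$ (the $2$-elementary type $(r,a,\delta)=(10,10,0)$) is what forces the geometric involution to be fixed-point free, while the absence of $(-2)$-classes in the complement guarantees that the quotient is smooth. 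Everything thus reduces to the complement $T := \Gamma(2)^{\perp}\subseteq N$.

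\emph{Identifying the genus of $T$.} Since $N$ has signature $(1,21)$, rank $22$ and discriminant group $(\mathbb{Z}/p\mathbb{Z})^{2\sigma}$, the lattice $T$ is negative definite of rank $12$. Because $\Gamma(2)$ is $2$-elementary while $N$ has odd discriminant, the gluing between the invariant and anti-invariant lattices is entirely $2$-adic, and a short discriminant-form computation in Nikulin's framework pins down the genus of $T$: its discriminant form is $-q_{\Gamma(2)} \oplus q_{p,\sigma}$, so $T$ carries a $2$-part $(\mathbb{Z}/2\mathbb{Z})^{10}$ anti-isometric to that of $\Gamma(2)$ and a $p$-part $(\mathbb{Z}/p\mathbb{Z})^{2\sigma}$ isometric to that of $N$. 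The question becomes: for which $\sigma$ and $p$ does this genus of negative-definite even rank-$12$ lattices contain a representative with no $(-2)$-vectors?

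\emph{The non-existence range $\sigma\ge 6$.} The $p$-length of the discriminant group of $T$ equals $2\sigma$ and is bounded by $\rank T = 12$, so $\sigma\le 6$; for $\sigma\ge 7$ no such $T$, hence no Enriques involution, exists. The borderline $\sigma=6$ is the critical case $\ell\!\left(D(T)_p\right)=12=\rank T$: here the length condition forces $T\otimes\mathbb{Z}_p \cong p\cdot(\text{unimodular})$, so every value of $T$ is divisible by $p$ and a $(-2)$-vector is automatically excluded. Consequently non-existence can only come from emptiness of the genus, and I would deduce it from the extra $p$-adic determinant (square-class) condition that Nikulin's existence theorem imposes precisely when the length equals the rank, checking that $q_{p,6}$ together with the prescribed $2$-adic part and the signature constraint $\equiv -12 \pmod 8$ cannot be realized simultaneously. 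I expect this borderline bookkeeping to be the most delicate point of the non-existence statement.

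\emph{The existence range and the conditions on $p$.} For $\sigma\in\{1,\dots,5\}$ the lattice $T$ cannot split as a $2$-part orthogonal to a $p$-part once $\sigma\ge 2$ (the lengths $10$ and $2\sigma$ must overlap inside rank $12$), so $T$ necessarily has $2\sigma-2$ directions scaled by $2p$, two scaled by $p$, and $12-2\sigma$ scaled by $2$; the genuinely ``light'' directions that can carry roots are exactly the latter. I would exhibit a root-free representative by assembling root-free blocks ($E_8(-2)$ and $U(2)$, of minimum $4$, for the $2$-elementary part, and suitably $p$-scaled pieces of minimum $>2$ for the rest) and verifying that the assembled lattice realizes $q_{p,\sigma}$ exactly. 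Matching the \emph{form} $q_{p,\sigma}$ (not just the group) by a root-free lattice forces congruence and size conditions on $p$ coming from the precise shape of the supersingular discriminant form; this is the source of the thresholds ``$p=11$ or $p\ge 19$'' for $\sigma\in\{3,5\}$ and ``$p=19$ or $p\ge 29$'' for $\sigma\in\{2,4\}$, with the parity of $\sigma$ entering through the type of $q_{p,\sigma}$, the sporadic small primes verified by direct genus computation, and $\sigma=1$ succeeding for every $p$ because its $p$-part has the minimal length $2$ and leaves the most room. The main obstacle throughout is this control of $(-2)$-vectors in the definite complement: deciding when the prescribed genus is non-empty and contains a root-free member is the arithmetic heart of the statement, and it is here---rather than in the geometric reduction---that all the case distinctions and the number-theoretic thresholds are generated.
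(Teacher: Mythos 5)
Your reduction via the supersingular Torelli theorem (an Enriques involution exists if and only if there is a primitive embedding $\Gamma(2)\hookrightarrow\mathrm{NS}(X)$ whose orthogonal complement $T$ contains no $(-2)$-vector) and your identification of the genus of $T$ coincide with the route the paper itself uses (Proposition \ref{bij} and the remark following Theorem \ref{exist}). Your non-existence argument is also sound in outline: for $\sigma\geq7$ the length of the discriminant group exceeds $\mathrm{rank}\,T=12$, and for $\sigma=6$ the genus is indeed empty for the reason you anticipate --- $T\otimes\mathbb{Z}_p$ would be $p$ times a unimodular lattice whose determinant is forced globally into the square class of $2^{10}$, while non-neutrality of the supersingular discriminant form $q_{p,6}$ forces a non-square; you only promise this check rather than carry it out, but it does close.

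The genuine gap is in the existence part, and it is fatal to the construction as you describe it. You propose to assemble a root-free representative of the genus from orthogonal blocks: $E_8(-2)$ and $U(2)$ for the $2$-elementary part, plus $p$-scaled pieces. This cannot work. First, $U(2)$ has signature $(1,1)$, so it cannot be an orthogonal summand of the negative definite lattice $T$; you are importing the characteristic-zero picture, where the complement of $\Gamma(2)$ in the unimodular K3 lattice is the \emph{indefinite} lattice $U\oplus U(2)\oplus E_8(-2)$, and definiteness of $T$ is precisely what makes the supersingular problem hard. Second, any negative definite lattice of rank $10$ with discriminant group $(\mathbb{Z}/2\mathbb{Z})^{10}$ equals $M(2)$ for a unimodular $M$ of rank $10$, and the only such $M$ are $\langle-1\rangle^{10}$ and $E_8(-1)\oplus\langle-1\rangle^{2}$, both of minimum $1$; hence \emph{every} admissible $2$-elementary block of the required rank and length contains $(-2)$-vectors, so even the $\sigma=1$ case cannot be settled by such a splitting. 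Third, for $\sigma\geq2$ you yourself note that no splitting of $T$ into a $2$-part orthogonal to a $p$-part exists, contradicting the shape of your assembly. Consequently your claim that the thresholds ($p=11$ or $p\geq19$, resp.\ $p=19$ or $p\geq29$) would drop out of this construction is unsupported; those thresholds are in fact an artifact of Jang's method, not of the genus (the result holds for all $p\geq3$, $\sigma\leq5$, by Theorem \ref{exist}). Jang's actual proof, sketched in the paper, proceeds quite differently: one builds auxiliary complex K3 surfaces $X_{\sigma,d}$ with transcendental lattice $U(2)\oplus M_{\sigma,d}$ and, for $p$ in the stated range, chooses $d$ so as to obtain a chain of primitive embeddings $\Gamma(2)\hookrightarrow\mathrm{NS}(X_{\sigma,d})\hookrightarrow\mathrm{NS}(X)$ whose final complement is root-free; and the paper's own extension to small $p$ has to produce root-free lattices in the genus by Kneser's neighbour method on a computer --- further evidence that no naive orthogonal-block construction exists.
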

The proof boils down to the following: if $X$ is a supersingular K3 surface of Artin invariant $\sigma \leq 5$, we need to show that there exists a primitive embedding of lattices $\Gamma(2) \hookrightarrow \mathrm{NS}(X)$ without any vector of self-intersection $-2$ in the complement of $\Gamma(2)$. Jang proved that such embeddings exist when the characteristic of the base field is large enough, but the same argument does not work over fields of small characteristic. With the help of the algebra software \textsc{magma} we explicitly show that such embeddings exist in the remaining cases. Hence, our results combined with Jang's yield a new proof for \cite[Corollary 2.4]{MR3350105} which does not rely on previous results over fields of characteristic zero.

Having established that the set of isomorphism classes of Enriques quotients of a supersingular K3 surface $X$ of Artin invariant $\sigma \leq 5$ is always nonempty, we are now interested in calculating some explicit numbers. In practice it turns out that this is a hard problem, however when the characteristic $p$ of the ground field is small, we found the following lower bounds with the help of \textsc{magma}.
\begin{proposition*}[see Proposition \ref{lowb}] For the number of isomorphism classes of Enriques quotients of a supersingular K3 surface of Artin invariant $\sigma$ over an algebraically closed ground field $k$ of characteristic $p$ we found the following numbers of equivalence classes under the action of $O(\mathrm{NS}(X))$ on $\mathfrak{M}$ denoted by $\mathrm{Rep}(p, \sigma)$: 
\begin{table}[H]
\caption{Some results for the lower bounds $\mathrm{Rep}(p, \sigma)$}
% title of Table
\centering 
% used for centering table
\begin{tabular}{c c c c c c}
% centered columns (6 columns)
\hline
\hline                        %inserts double horizontal lines
$p$ & $\sigma=1$ & $\sigma=2$ & $\sigma=3$ & $\sigma=4$ & $\sigma=5$ \\
 [0.5ex]
% inserts table 
%heading
\hline
                  % inserts single horizontal line
$3$ & $2$ & $12$ & $30$ & $20$ & $7$ \\
$5$ & $10$ & $222$ & $875$ & $302$ & $24$  \\
$7$ & $42$ & $3565$  & $?$ & $4313$ & $81$ \\
$11$ & $256$ & $?$ & $?$ & $?$ & $438$ \\
$13$ & $537$ & $?$ & $?$ & $?$ & $866$\\
$17$ & $2298$ & $?$ & $?$ & $?$ & $2974$ \\
 [1ex]
      % [1ex] adds vertical space
\hline
%inserts single line
\end{tabular}
\end{table} \end{proposition*}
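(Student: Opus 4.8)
The plan is to reduce the entire statement to the computation, for each pair $(p,\sigma)$ in the table, of the single integer $k$ from Theorem~\ref{form}: the number of orbits of $O(\mathrm{NS}(X))$ acting on the set $\mathfrak{M}$ of primitive embeddings $\Gamma(2)\hookrightarrow \mathrm{NS}(X)$ whose orthogonal complement contains no vector of self-intersection $-2$. Indeed, the left-hand inequality $k\le \#\{\text{Enriques quotients of }X\}$ of Theorem~\ref{form} holds unconditionally, so once $\mathrm{Rep}(p,\sigma):=k$ has been determined it is automatically a lower bound, and nothing about $\mathrm{Aut}(X)$ or the surjectivity of $O(\mathrm{NS}(X))\to O(q_{\mathrm{NS}(X)})$ is needed. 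The first step is therefore to fix an explicit model of $\mathrm{NS}(X)$: by the work of Rudakov--Shafarevich and Ogus the Neron--Severi lattice of a supersingular K3 surface of Artin invariant $\sigma$ in characteristic $p$ is the unique even lattice $\Lambda_{p,\sigma}$ of signature $(1,21)$ with discriminant group $(\BZ/p\BZ)^{2\sigma}$ and the prescribed $p$-adic discriminant form, and I would write down a concrete Gram matrix for it to feed into \textsc{magma}.

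The main step is to enumerate $\mathfrak{M}/O(\Lambda_{p,\sigma})$. Since $O(\Lambda_{p,\sigma})$ is infinite a direct orbit computation is impossible, so instead I would appeal to Nikulin's theory of discriminant forms, which converts primitive embeddings of $\Gamma(2)$ into $\Lambda_{p,\sigma}$, taken up to the $O(\Lambda_{p,\sigma})$-action, into finite data attached to the orthogonal complement $T$. Here $T$ is an even negative-definite lattice of rank $12$ whose genus is pinned down by the requirement that the overlattice of $\Gamma(2)\oplus T$, glued along the isotropic graph of an anti-isometry between subgroups of the discriminant groups, be isometric to $\Lambda_{p,\sigma}$; crucially, the $-2$-condition on $\mathfrak{M}$ translates into the condition that $T$ contain no roots. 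Concretely the recipe is: (i) determine the admissible genus, equivalently the admissible discriminant forms, of $T$ from the gluing relation; (ii) enumerate the isometry classes in that genus, for instance by Kneser's neighbouring method, and discard every class whose positive-definite form $T(-1)$ has a vector of norm $2$, by a short-vector search; and (iii) for each surviving $T$, count the admissible gluings up to the double-coset action of the finite groups $O(q_{\Gamma(2)})$ and the image of $O(T)$ in $O(q_T)$. Summing these finite contributions produces $k=\mathrm{Rep}(p,\sigma)$, and I would carry out each case as a \textsc{magma} computation.

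The hard part will be computational feasibility rather than any conceptual obstruction. The size of the discriminant group $(\BZ/p\BZ)^{2\sigma}$, the order of $O(q_{\mathrm{NS}(X)})$ as an orthogonal group over $\BF_p$, the number of isometry classes in the genus of $T$, and the number of rootless complements together with their gluings all grow rapidly and rather irregularly with $p$ and $\sigma$; it is the explosion in the number of rootless complements $T$ and the size of the associated orbit count --- not the mere size of a single lattice --- that makes the enumeration in steps (ii)--(iii) intractable in the intermediate cases, forcing the ``$?$'' entries for $\sigma\in\{2,3,4\}$ at large $p$, while the near-boundary column $\sigma=5$ and the minimal-discriminant column $\sigma=1$ remain within reach. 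A secondary point I would have to check carefully is that \textsc{magma}'s genus enumeration is provably exhaustive, so that no admissible rootless $T$ is overlooked, and that the double-coset bookkeeping counts $O(\Lambda_{p,\sigma})$-orbits exactly rather than over- or under-counting; I would validate the whole method on the $\sigma=1$ column against an independent hand computation before trusting the larger entries.
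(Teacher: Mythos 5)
Your proposal matches the paper's method essentially step for step: both rest on the unconditional left-hand inequality of Theorem \ref{form}, and both convert the count of $O(\mathrm{NS}(X))$-orbits on $\mathfrak{M}$, via Nikulin's classification of primitive embeddings (Proposition 1.15.1), into a \textsc{magma} enumeration (Kneser's neighbour method applied to the negative-definite complement) of isometry classes of even lattices in the genus $(0,12,\delta_{p,\sigma})$ that contain no $(-2)$-vectors. The only divergence is that the paper collapses your step (iii) immediately: because $\#A_{\Gamma(2)}=2^{10}$ and $\#A_{\mathrm{NS}(X)}=p^{2\sigma}$ are coprime the gluing subgroup must be zero, and surjectivity of $O(\Gamma(2))\to O(q_{\Gamma(2)})$ (and of $O(\mathrm{NS}(X))\to O(q_{\mathrm{NS}(X)})$, Lemma \ref{surNS}) makes the double-coset count equal to one per complement, so $\mathrm{Rep}(p,\sigma)$ is exactly the number of rootless isometry classes in the genus.
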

Using the computer algebra program \textsc{sage} we then computed the cardinalities of the group quotients $O\left(q_{\mathrm{NS}(X)}\right)/\mathrm{pr}\left(K^{(j)}\right)$ in these cases and found the following results for the upper bounds in Theorem \ref{form}. For $\sigma > 1$ these yield the number of isomorphism classes of Enriques quotients for a generic supersingular K3 surface of Artin invariant $\sigma$.
\begin{proposition*}
For the number of isomorphism classes of Enriques quotients of a supersingular K3 surface of Artin invariant $\sigma$ over an algebraically closed ground field $k$ of characteristic $p$ we found the following upper bounds. When $\sigma > 2$, then these are the numbers of isomorphism classes of Enriques quotients of a general supersingular K3 surface of Artin invariant $\sigma$.
\begin{table}[H]
\caption{Some results for the upper bounds}
% title of Table
\centering 
% used for centering table
\begin{tabular}{c c c c c }
% centered columns (6 columns)
\hline
\hline                        %inserts double horizontal lines
$p$ & $\sigma=1$ & $\sigma=2$ & $\sigma=3$ & $\sigma=4$ \\
 [0.5ex]
% inserts table 
%heading
\hline
                  % inserts single horizontal line
$3$ & $2$ & $490$ & $1278585$ & $24325222428$ \\
$5$ & $33$ & $635765$ & $1614527971875$ & $37184780652626927616$  \\
$7$ & $175$ & $191470125$  & $?$ & $88339146755283817573908480$  \\
$11$ & $2130$ & $?$ & $?$ & $?$ \\
$13$ & $5985$ & $?$ & $?$ & $?$ \\
$17$ & $36000$ & $?$ & $?$ & $?$  \\
 [1ex]
      % [1ex] adds vertical space
\hline
%inserts single line
\end{tabular}
\end{table}

\begin{table}[H]
\caption{Some results for the upper bounds}
% title of Table
\centering 
% used for centering table
\begin{tabular}{c c}
% centered columns (6 columns)
\hline
\hline                        %inserts double horizontal lines
$p$ & $\sigma=5$ \\
 [0.5ex]
% inserts table 
%heading
\hline
                  % inserts single horizontal line
$3$  & $1286212218643287$ \\
$5$  & $1300418157436546004702724096
$  \\
$7$ &  $146385612443146033546182607153135616$ \\
$11$ & $9360899237983480445308665427637667976947171328$ \\
$13$ & $86881471802459725997082069598436845809167673327616$\\
$17$ & $117559509833496435964143968964217511931559374134458712064$ \\
 [1ex]
      % [1ex] adds vertical space
\hline
%inserts single line
\end{tabular}
\end{table}

\end{proposition*}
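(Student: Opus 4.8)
The plan is to feed the upper bound of Theorem \ref{form} with the orbit data already produced in Proposition \ref{lowb} and then carry out the resulting finite group computations in \textsc{sage}. Concretely, for each fixed pair $(p,\sigma)$ the supersingular Neron-Severi lattice $\mathrm{NS}(X)$ is determined up to isometry by $p$ and $\sigma$, and the enumeration behind Proposition \ref{lowb} supplies a complete set of representatives $M_1,\dots,M_k$ for the action of $O(\mathrm{NS}(X))$ on $\mathfrak{M}$, with $k=\mathrm{Rep}(p,\sigma)$. The upper bound to be evaluated is $\sum_{j=1}^k \#\bigl(O(q_{\mathrm{NS}(X)})/\mathrm{pr}(K^{(j)})\bigr)$, so it suffices to compute each summand.

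First I would, for each representative, determine the stabilizer $K^{(j)}\subset O(\mathrm{NS}(X))$, that is, the setwise stabilizer of the primitive sublattice $M_j\cong\Gamma(2)$ inside the orthogonal group of $\mathrm{NS}(X)$. Next I would compute the discriminant form $q_{\mathrm{NS}(X)}$ and its orthogonal group $O(q_{\mathrm{NS}(X)})$ together with the natural reduction homomorphism $\mathrm{pr}\colon O(\mathrm{NS}(X))\to O(q_{\mathrm{NS}(X)})$, form the image $\mathrm{pr}(K^{(j)})$ of each stabilizer, and read off $\#\bigl(O(q_{\mathrm{NS}(X)})/\mathrm{pr}(K^{(j)})\bigr)$ as the index of this subgroup. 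Summing over $j$ then yields the tabulated numbers; each displayed entry is the output of exactly this chain of group-theoretic manipulations, and the entries marked $?$ are simply the cases where the computation did not terminate in reasonable time.

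To upgrade the upper bound to an exact count when $\sigma>2$, I would invoke the equality clause of Theorem \ref{form}: the right-hand inequality becomes an equality as soon as every $\theta\in\mathrm{Aut}(X)$ acts on $\mathrm{NS}(X)^{\vee}/\mathrm{NS}(X)$ by $\pm 1$. This hypothesis holds for a general supersingular K3 surface of Artin invariant $\sigma$, and that genericity is exactly what the results of Nygaard \cite{Nygaard1980} (referenced after Theorem \ref{form}) provide: outside a proper closed locus in the moduli of supersingular K3 surfaces of invariant $\sigma$, there are no automorphisms inducing isometries of the discriminant group other than $\pm 1$. Hence on the complement of that locus the computed upper bound is attained, and therefore equals the number of Enriques quotients. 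The restriction $\sigma>2$ reflects that for $\sigma\le 2$ the surface is too special (for $\sigma=1$ it is even unique) for this genericity input to apply, so there only the upper bound is asserted.

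I expect the main obstacle to be computational rather than structural. The groups $O(\mathrm{NS}(X))$ and $O(q_{\mathrm{NS}(X)})$ grow extraordinarily fast with $p$ and $\sigma$, as the fifty-digit table entries already indicate, so the difficulty lies in arranging the computation to avoid ever handling these enormous orthogonal groups directly: in practice one works with the discriminant form and the images $\mathrm{pr}(K^{(j)})$, exploiting the structure of $q_{\mathrm{NS}(X)}$ to extract the relevant indices without enumerating whole groups. The remaining delicate point is the genericity step, where one must argue carefully, via Nygaard's results, that the $\pm 1$ hypothesis genuinely holds generically, so that the finite computations can legitimately be promoted from upper bounds to exact counts for $\sigma>2$.
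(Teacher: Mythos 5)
Your overall strategy (evaluate the right-hand bound of Theorem \ref{form} on the representatives coming from Proposition \ref{lowb}, then invoke genericity \`a la Nygaard to promote the bound to an equality) matches the paper, and your genericity discussion is essentially the paper's Proposition \ref{nice} plus Remark \ref{dense}. But there is a genuine gap in the computational core: you propose to ``determine the stabilizer $K^{(j)}\subset O(\mathrm{NS}(X))$'' and then its image under $\mathrm{pr}$. The group $O(\mathrm{NS}(X))$ is the orthogonal group of an \emph{indefinite} lattice of signature $(1,21)$; it is infinite, and neither it, nor the setwise stabilizer of $M_j$, nor the reduction map to $O(q_{\mathrm{NS}(X)})$ can be handled by any direct enumeration. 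You do acknowledge that one must ``avoid ever handling these enormous orthogonal groups directly,'' but you never supply the mechanism that makes this possible, and without it the plan cannot be executed.

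The missing idea is the lattice-theoretic reduction the paper proves just before Corollary \ref{alt}: since $O(\Gamma(2))\to O(q_{\Gamma(2)})$ is surjective \cite[Theorem 1.14.2]{1980IzMat..14..103N}, Nikulin's extension criterion \cite[Corollary 1.5.2]{1980IzMat..14..103N} shows that \emph{every} isometry of the orthogonal complement $M_j^{\perp}$ extends to an isometry of $\mathrm{NS}(X)$ preserving $M_j$, and the induced element of $O(q_{\mathrm{NS}(X)})$ depends only on the isometry of $M_j^{\perp}$ one started with. Consequently $\mathrm{pr}(K^{(j)})$ equals the image of $O(M_j^{\perp})$ under $O(M_j^{\perp})\to O\bigl((q_{M_j^{\perp}})_p\bigr)\cong O(q_{\mathrm{NS}(X)})$. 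This replaces the infinite stabilizer problem by a computation with the \emph{finite} orthogonal group of a negative definite rank-$12$ lattice (exactly the Gram matrices produced by the \textsc{magma} search), which is what the paper's \textsc{sage} code actually does. Finally, the order of $O(q_{\mathrm{NS}(X)})$ itself is not obtained by building the group but from Wall's closed formula for quadratic forms of type IV \cite{MR172935}; combining the two gives each index $\#\bigl(O(q_{\mathrm{NS}(X)})/\mathrm{pr}(K^{(j)})\bigr)$. Without this reduction step your proposal asserts the right answer but gives no workable route to it.
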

The situation in the case where $p=3$ and $\sigma=1$ is particularly easy and we observe the following result.
\begin{theorem*}[see Theorem \ref{p3s1}] There are exactly two isomorphism classes of Enriques quotients of the supersingular K3 surface $X$ of Artin invariant $1$ over an algebraically closed field $k$ of \mbox{characteristic $3$}.
\end{theorem*}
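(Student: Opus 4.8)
The plan is to apply the two-sided estimate of Theorem \ref{form} to the (unique) supersingular K3 surface $X$ of Artin invariant $1$ in characteristic $3$ and to show that its lower and upper bounds already coincide, both being equal to $2$; the number of Enriques quotients is then pinned down exactly by the squeeze, with no further geometric input required.

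For the lower bound I would invoke Proposition \ref{lowb}: for $(p,\sigma) = (3,1)$ the number of $O(\mathrm{NS}(X))$-orbits on $\mathfrak{M}$ is $\mathrm{Rep}(3,1) = 2$, so $k = 2$. In particular $\mathfrak{M} \neq \emptyset$ --- consistent with Jang's criterion that $\sigma \leq 5$ forces the existence of an Enriques quotient --- and Theorem \ref{form} yields $\#\{\text{Enriques quotients of } X\} \geq 2$.

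For the upper bound I would make $O(q_{\mathrm{NS}(X)})$ completely explicit. Since $X$ has Artin invariant $1$ in characteristic $3$, its Neron-Severi lattice is the supersingular K3 lattice of discriminant $3^2$, so $q_{\mathrm{NS}(X)}$ is a non-degenerate quadratic form on the two-dimensional $\mathbb{F}_3$-space $\mathrm{NS}(X)^\vee/\mathrm{NS}(X) \cong (\mathbb{Z}/3\mathbb{Z})^2$; its orthogonal group is a dihedral group of order $2(3-1) = 4$ or $2(3+1) = 8$ according as the form is split or anisotropic, and I would read off which case occurs directly from the lattice. With the two orbit representatives $M_1, M_2 \in \mathfrak{M}$ of the first step in hand, it then remains to verify that each stabilizer $K^{(j)} = \{\psi \in O(\mathrm{NS}(X)) \mid \psi(M_j) = M_j\}$ satisfies $\mathrm{pr}(K^{(j)}) = O(q_{\mathrm{NS}(X)})$, so that every summand $\#(O(q_{\mathrm{NS}(X)})/\mathrm{pr}(K^{(j)}))$ equals $1$ and the right-hand side of Theorem \ref{form} is $1 + 1 = 2$. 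This is the value recorded for $(p,\sigma) = (3,1)$ in the upper-bound table and gives $\#\{\text{Enriques quotients of } X\} \leq 2$.

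Combining the two bounds gives $2 \leq \#\{\text{Enriques quotients of } X\} \leq 2$, hence equality; note that the squeeze makes it unnecessary to check the $\pm 1$ condition on $\mathrm{Aut}(X)$ that Theorem \ref{form} otherwise requires for sharpness on the right. The main obstacle is precisely the surjectivity $\mathrm{pr}(K^{(j)}) = O(q_{\mathrm{NS}(X)})$ for both $j$: this demands a concrete understanding of how the stabilizer of each embedding $\Gamma(2) \hookrightarrow \mathrm{NS}(X)$ acts on the discriminant form, and is the step I would settle by an explicit \textsc{magma} computation that realizes $O(\mathrm{NS}(X))$, the two embeddings $M_1, M_2$, their stabilizers, and the induced homomorphisms to $O(q_{\mathrm{NS}(X)})$.
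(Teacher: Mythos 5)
Your proposal is correct and follows essentially the same route as the paper: the paper's proof of Theorem \ref{p3s1} is exactly the squeeze between the lower bound $\mathrm{Rep}(3,1)=2$ from Proposition \ref{lowb} and the upper bound $2$ from Proposition \ref{upb} (whose table entry for $(p,\sigma)=(3,1)$ is obtained, just as you describe, by computing $O(q_{\mathrm{NS}(X)})$ and the images of the two stabilizers by machine computation via Corollary \ref{alt}). Your extra observation that each summand $\#\bigl(O(q_{\mathrm{NS}(X)})/\mathrm{pr}(K^{(j)})\bigr)$ must equal $1$ is implicit in the paper's numbers and does not change the argument.
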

In particular, we can be explicit about these two Enriques quotients: they are the two Enriques surfaces with finite automorphism group of type III and IV, see Corollary \ref{types}.

In the case of singular complex K3 surfaces and their Enriques quotients similar computations have recently been done by Shimada and Veniani \cite{shimada2019enriques}.
\section*{Acknowledgements}
I thank my doctoral advisor Christian Liedtke for his extensive support of my work. I would also like to thank Markus Kirschmer for helping me with using the computer algebra program \textsc{Magma}. Further thanks go to Gebhard Martin for useful comments and remarks. I also thank Paul Hamacher for many helpful discussions. The author is supported by the ERC Consolidator Grant 681838 "K3CRYSTAL".
\section{Prerequisites and notation}\label{overview}
In this section we fix some notation and recall known results.

Let $k$ be a perfect field of characteristic $p \geq 3$. A K3 surface $X$ over $k$ is called \emph{(Shioda-) supersingular} if and only if $\mathrm{rk}(\mathrm{NS}(X)) = 22$. This definition of supersingularity is due to Shioda. There is a second definition for supersingularity due to Artin. Namely, a K3 surface $X$ over $k$ is called \emph{Artin supersingular} if and only if its formal Brauer group $\Phi^2_X$ is of infinite height. It follows from the Tate conjecture, that over any perfect field $k$ a K3 surface is Artin supersingular if and only if it is Shioda supersingular \cite{MR3265555}. Charles first proved the Tate conjecture over fields of characteristic at least $5$ \cite{MR3103257}. Using the Kuga-Satake construction, Madapusi Pera gave a proof of the Tate conjecture over fields of characteristic at least $3$ \cite{MR3370622}. Over fields of characteristic $p=2$, the Tate conjecture was proved by Kim and Madapusi Pera \cite{MR3569319}.
\subsection{Lattices}
We fix some notation and recall basic definitions and results on lattices from \cite{1980IzMat..14..103N}. 

In the following, by a \emph{lattice} $(L, \langle\cdot,\cdot\rangle)$ we mean a free $\mathbb{Z}$-module $L$ of finite rank together with a nondegenerate symmetric bilinear form $\langle \cdot, \cdot \rangle \colon L \times L \rightarrow \mathbb{Z}$. A morphism of lattices is a morphism of the underlying $\mathbb{Z}$-modules that is compatible with intersection forms. To simplify notation, we will often talk about the lattice $L$, omitting the bilinear form. The lattice $L$ is called \emph{even} if $\langle x, x \rangle \in \mathbb{Z}$ is even for each $x \in L$. A lattice is \emph{odd} if it is not even. For $a \in \mathbb{Q}$ and if $a \langle L, L \rangle \subset \mathbb{Z}$, we denote by $L(a)$ the twisted lattice with underlying $\mathbb{Z}$-module $L$ and bilinear form $a \langle \cdot, \cdot \rangle$.

After choosing a basis $\{e_1,\ldots, e_n\}$ of $L$, the \emph{discriminant of $L$} is defined to be $\disc L = \det \left(\left( e_i \cdot e_j \right)_{ij}\right) \in \mathbb{Z}$. This definition does not depend on the chosen basis. The lattice $L$ is called \emph{unimodular} if $\left| \disc L \right|= 1$. The \emph{dual lattice of $L$} is the free $\mathbb{Z}$-module $L^{\vee}= \mathrm{Hom}(L, \mathbb{Z}) \subseteq L \otimes \mathbb{Q}$ together with the bilinear form $\langle \cdot, \cdot \rangle_{L^{\vee}} \colon L^{\vee} \times L^{\vee} \rightarrow \mathbb{Q}$ induced from $\langle \cdot, \cdot \rangle_{\mathbb{Q}} \colon L_{\mathbb{Q}} \times L_{\mathbb{Q}} \rightarrow \mathbb{Q}$. The \emph{discriminant group $A_L = L^{\vee}/L$} is a finite abelian group and is equipped with a canonical finite quadratic form $q_L \colon A_L \rightarrow \mathbb{Q}/2\mathbb{Z}$ induced from $\langle \cdot, \cdot \rangle$. One can show that $\# A_L  = \disc L$. Let $p$ be a prime number. If $p \cdot A_L = 0$ we say that the lattice $L$ is \emph{$p$-elementary}. 

We define the \emph{signature of $L$} to be the signature $(l_+, l_-)$ of the quadratic space $L \otimes \mathbb{Q}$. Likewise, the lattice $L$ is called \emph{positive definite} (respectively \emph{negative definite}) if and only if the quadratic space $L \otimes \mathbb{Q}$ is \emph{positive definite} (respectively \emph{negative definite}). There are two lattices which we will frequently use within this work. Namely, we will write $U$ for the even unimodular lattice of signature $(1,1)$ and $E_8$ for the even unimodular lattice of signature $(0,8)$. It is well-known that by prescribing these invariants the lattices $U$ and $E_8$ are well-defined up to isomorphism.

Let us now turn to morphisms of lattices. It is easy to see that any morphism $\psi \colon L_1 \rightarrow L_2$ of lattices is automatically injective. We will therefore also use the term \emph{embedding of lattices} when talking about morphisms. A given embedding of lattices $\psi \colon L_1 \hookrightarrow L_2$ is called \emph{primitive} if the quotient $L_2/\psi(L_1)$ is a free $\mathbb{Z}$-module. On the other hand, if the quotient $L_2/\psi(L_1)$ is finite, then we call $L_2$ an \emph{overlattice of $L_1$}. 

To a lattice $L$ we associate its \emph{genus $[L]$}, which is the class consisting of all lattices $L'$ such that $L \otimes \mathbb{Z}_p \cong L' \otimes \mathbb{Z}_p$ for all primes $p$ and $L \otimes \mathbb{R} \cong L' \otimes \mathbb{R}$. We will use the following characterization of the genus of a lattice which is due to \cite[Corollary 1.9.4]{1980IzMat..14..103N}.
\begin{proposition}\label{genus} Let $L$ be an even lattice. Then the genus $[L]$ is uniquely determined by the signature of $L$ and the discriminant form $q_L$. \end{proposition}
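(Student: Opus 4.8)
The plan is to prove the two implications contained in the statement: that the signature and the discriminant form are genus invariants, and, conversely and substantively, that two even lattices sharing these data lie in the same genus. The first implication is the routine one. Two lattices in one genus become isomorphic over $\BR$, so Sylvester's law of inertia forces equal signatures; and an isomorphism $L\otimes\BZ_p\cong L'\otimes\BZ_p$ induces an isomorphism of the $p$-primary discriminant forms, since $A_L=\bigoplus_p(A_L)_p$ with $(A_L)_p=(L\otimes\BZ_p)^\vee/(L\otimes\BZ_p)$ and the quadratic form is induced locally. Thus $q_L$ and the signature are determined by the genus, and all the work lies in the reverse direction: given even lattices $L,L'$ of the same signature $(l_+,l_-)$ with $q_L\cong q_{L'}$, I must exhibit isomorphisms $L\otimes\BR\cong L'\otimes\BR$ and $L\otimes\BZ_p\cong L'\otimes\BZ_p$ for every prime $p$. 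The real case is immediate from equal signatures, so everything reduces to the $p$-adic classification.

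For an odd prime $p$ I would use the Jordan splitting: every nondegenerate $\BZ_p$-lattice decomposes orthogonally as $\bigoplus_{i\ge 0}p^iM_i$ with each $M_i$ unimodular over $\BZ_p$, and for odd $p$ a unimodular $\BZ_p$-lattice is determined up to isomorphism by its rank and its determinant class in $\BZ_p^\times/(\BZ_p^\times)^2$. The blocks $p^iM_i$ with $i\ge 1$ are precisely those contributing to the $p$-part of the discriminant group, so $q_L\cong q_{L'}$ pins down the ranks and determinant classes of all constituents with $i\ge 1$. The unimodular block $M_0$ is then forced: its rank is $l_++l_--\sum_{i\ge 1}\rank M_i$, and its determinant class is recovered from the global determinant $\disc L$, whose sign is $(-1)^{l_-}$ and whose absolute value is $\#A_L$, both fixed by the signature and the discriminant group, after dividing out the contributions of the higher blocks. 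Hence the entire Jordan splitting of $L\otimes\BZ_p$ is determined by the prescribed data, giving $L\otimes\BZ_p\cong L'\otimes\BZ_p$.

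The prime $p=2$ is where the main obstacle lies, and it is handled by the same philosophy with considerably heavier bookkeeping. Over $\BZ_2$ rank and determinant no longer classify unimodular lattices: there are both even and odd unimodular constituents, the even ones coming in two types, and one must track further invariants such as the parity and the oddity of each Jordan constituent. Here the even-ness hypothesis on $L$ is essential, and the correct framework is Nikulin's classification of finite quadratic forms, which exhibits $q_{L,2}$ as an orthogonal sum of standard $2$-adic pieces and matches each such piece to a Jordan constituent of $L\otimes\BZ_2$ of positive scale. The technical heart is to check that, after this matching, the residual freedom in the $2$-adically unimodular block is again eliminated by the fixed total rank and total determinant, so that $q_{L,2}$ together with the signature determines $L\otimes\BZ_2$ up to isomorphism. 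Combining the isomorphism over $\BR$ with those over every $\BZ_p$ shows that $L$ and $L'$ lie in a common genus, which is the assertion.
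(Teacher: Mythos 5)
You should first note that the paper does not actually prove Proposition \ref{genus}: it is quoted as a known result from \cite[Corollary 1.9.4]{1980IzMat..14..103N}, so your proposal has to be measured against Nikulin's argument, which it partially reconstructs. The parts you carry out are correct and complete: lattices in one genus have equal signatures and isomorphic discriminant forms, because $A_L \cong \bigoplus_p (L\otimes\BZ_p)^{\vee}/(L\otimes\BZ_p)$ compatibly with the quadratic forms; and conversely, for odd $p$, the positive-scale Jordan constituents of $L\otimes\BZ_p$ are recovered from $q_L$, while the unimodular constituent is then forced by the rank and by $\disc L=(-1)^{l_-}\#A_L$. This is the standard local-to-global skeleton and it is the right one.

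The genuine gap is at $p=2$, which is the entire non-routine content of the proposition. Your plan there --- decompose $q_{L,2}$ into elementary pieces, match each piece to a positive-scale Jordan constituent of $L\otimes\BZ_2$, and then eliminate the ``residual freedom'' in the unimodular constituent by rank and determinant --- is not a valid skeleton, because at $p=2$ no such matching exists: the Jordan splitting of a $\BZ_2$-lattice is not unique (non-isomorphic Jordan data can define isometric lattices), and dually a finite quadratic form on a $2$-group admits genuinely different orthogonal decompositions into elementary forms, since those generators satisfy nontrivial relations (catalogued in \cite[Section 1.8]{1980IzMat..14..103N}). So the freedom is not confined to the unimodular block, and the statement you label ``the technical heart'' --- that an even $\BZ_2$-lattice is determined up to isometry by its rank, determinant and discriminant form --- is precisely the local theorem in \cite[Section 1.9]{1980IzMat..14..103N} from which Corollary 1.9.4 is deduced; proving it requires working through those relations, not bookkeeping. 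Since you assert this check rather than perform it, the proposal in effect reduces the proposition to the very result being cited. It becomes a complete (though not self-contained) argument once that paragraph is replaced by an explicit appeal to Nikulin's $2$-adic classification --- which is, in substance, exactly what the paper does by quoting Corollary 1.9.4.
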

There is also a version of Proposition \ref{genus} for the odd case. We will only need the even case in this work though and therefore omit the odd version. 
\subsection{K3 crystals}
Most of the following content is due to Ogus \cite{MR563467}\cite{MR717616}. A strong inspiration for our treatment in this section and a good source for the interested reader is \cite{MR3524169}.

For the definition of $F$-crystals and their slopes we refer to \cite[Chapter I.1]{MR563463}. Given a supersingular K3 surface $X$, it turns out that a lot of information is encoded in its second crystalline cohomology. We say that $H^2_{\mathrm{crys}}(X/W)$ is a \emph{supersingular K3 crystal} of rank $22$ in the sense of the following definition, due to Ogus \cite{MR563467}.
\begin{definition} Let $k$ be a perfect field of positive characteristic $p$ and let $W=W(k)$ be its Witt ring with lift of Frobenius $\sigma \colon W \rightarrow W$. A \emph{supersingular K3 crystal of rank $n$} over $k$ is a free $W$-module $H$ of rank $n$ together with an injective  $\sigma$-linear map 
\begin{align*} \varphi \colon H \rightarrow H, \end{align*}
 i.e.\ $\varphi$ is a morphism of abelian groups and $\varphi(a \cdot m)= \sigma(a) \cdot \varphi(m)$ for all $a \in W$ and $m \in H$, and a symmetric bilinear form
 \begin{align*}
 \langle -,- \rangle \colon H \times H \rightarrow W, 
 \end{align*}
such that
\begin{enumerate}
\item $p^2H \subseteq \mathrm{im}(\varphi)$,
\item the map $\varphi \otimes_W k$ is of rank $1$,
\item $\langle -,- \rangle$ is a perfect pairing,
\item $\langle \varphi(x), \varphi(y) \rangle= p^2\sigma \left(\langle x,y \rangle \right)$, and
\item the $F$-crystal $(H,\varphi)$ is purely of slope $1$.
\end{enumerate}
\end{definition}
The \emph{Tate module} $T_H$ of a K3 crystal $H$ is the $\mathbb{Z}_p$-module 
$$ T_H \coloneqq \{x \in H \mid \varphi(x)= px\}.$$
One can show that if $H=H^2_{\mathrm{crys}}(X/W)$ is the second crystalline cohomology of a supersingular K3 surface $X$ and $c_1 \colon \mathrm{Pic}(X) \rightarrow H^2_{\mathrm{crys}}(X/W)$ is the first crystalline Chern class map, we have $c_1(\mathrm{Pic}(X)) \subseteq T_H$. If $X$ is defined over a perfect field, the Tate conjecture is known, see \cite{MR3103257} \cite{MR3370622}, and it follows that we even have the equality $c_1(\mathrm{NS}(X)) \otimes \mathbb{Z}_p= T_H$. The following proposition on the structure of the Tate module of a supersingular K3 crystal is due to Ogus \cite{MR563467}.
\begin{proposition}
Let $(H,\varphi, \langle -,- \rangle)$ be a supersingular K3 crystal over a field $k$ of characteristic $p > 2$ and let $T_H$ be its Tate module. Then $\mathrm{rk}_W H= \mathrm{rk}_{\mathbb{Z}_p} T_H$ and the bilinear form $(H, \langle -,- \rangle)$ induces a non-degenerate form $T_H \times T_H \rightarrow \mathbb{Z}_p$ via restriction to $T_H$ which is not perfect. More precisely, we find
\begin{enumerate}
\item $\mathrm{ord}_p(A_{T_H})=2 \sigma$ for some positive integer $\sigma$,
\item $(T_H, \langle -,-\rangle)$ is determined up to isometry by $\sigma$,
\item $\mathrm{rk}_W H \geq 2 \sigma$ and
\item there exists an orthogonal decomposition
\begin{align*}
(T_H, \langle -,- \rangle) \cong (T_0,p \langle -,- \rangle) \perp (T_1, \langle -,- \rangle),
\end{align*}
where $T_0$ and $T_1$ are $\mathbb{Z}_p$-lattices with perfect bilinear forms and of ranks $\mathrm{rk}T_0= 2 \sigma$ and $\mathrm{rk}T_1= \mathrm{rk}_WH-2 \sigma$.
\end{enumerate}
\end{proposition}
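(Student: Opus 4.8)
The plan is to realise $T_H$ as a $\BZ_p$-lattice sitting inside $H$ and to extract all four assertions from the way $\varphi$ interacts with the pairing. \emph{Step 1: rank and the ambient lattice.} Since $(H,\varphi)$ is pure of slope $1$, the isocrystal $(H[1/p],\varphi)$ becomes, after base change to $\ov k$, a sum of copies of the slope-one isocrystal $(W[1/p],p\sigma)$; equivalently $\varphi/p$ is a unit-root structure. By Dieudonné--Manin the slope-one multiplicity equals $\mathrm{rk}_W H$, so $T_H=\{x\in H:\varphi(x)=px\}$ is free over $\BZ_p$ with $\mathrm{rk}_{\BZ_p}T_H=\mathrm{rk}_W H$, and $\wt T:=T_H\otimes_{\BZ_p}W$ is a full-rank $W$-sublattice of $H$. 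By $\sigma$-semilinearity $\varphi$ acts on $\wt T$ as $p\sigma$, so $\varphi(\wt T)=p\wt T$.

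\emph{Step 2: the pairing and the discriminant.} For $x,y\in T_H$ the relation $\langle\varphi x,\varphi y\rangle=p^2\sigma\langle x,y\rangle$ gives $\langle x,y\rangle=p^{-2}\langle\varphi x,\varphi y\rangle=\sigma\langle x,y\rangle$, so $\langle x,y\rangle\in W^{\sigma=1}=\BZ_p$ and the form restricts to a $\BZ_p$-valued symmetric form on $T_H$, non-degenerate because it base-changes to the perfect form on $H[1/p]$. Since $\wt T\subseteq H$ and $H$ is unimodular (the pairing being perfect), comparison of Gram determinants gives $\ord_p\disc T_H=\ord_p\disc\wt T=2\,\mathrm{length}_W(H/\wt T)$; setting $\sigma:=\mathrm{length}_W(H/\wt T)$ proves $\ord_p(A_{T_H})=2\sigma$, which is (1). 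Moreover $\wt T\neq H$, since otherwise $\varphi$ would act on a $W$-basis of $H$ by $p\sigma$ and $\varphi\otimes_W k$ would vanish, contradicting that $\varphi\otimes_W k$ has rank $1$; hence $\sigma\geq 1$ and the restricted form is not perfect.

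\emph{Step 3: $p$-elementariness (the crux).} I would next show $A_{T_H}$ has exponent $p$, i.e.\ $p\,T_H^\vee\subseteq T_H$. A direct computation shows $\varphi$ also stabilises the dual: for $v\in\wt T^\vee$ and $t\in\wt T$ write $\varphi t=ps$ with $s\in\wt T$; then $\langle\varphi v,s\rangle=p^{-1}\langle\varphi v,\varphi t\rangle=p\,\sigma\langle v,t\rangle\in pW$, and as $s$ runs through $\wt T$ this yields $\varphi(\wt T^\vee)\subseteq p\wt T^\vee$. Thus $\wt T\subseteq H=H^\vee\subseteq\wt T^\vee$ is a chain of $\varphi$-stable lattices on which $\varphi/p$ is a unit-root $\sigma$-linear operator, and $\wt T^\vee/\wt T$ inherits an induced operator. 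Feeding in $p^2H\subseteq\varphi(H)$ together with the rank-one condition on $\varphi\otimes_W k$ — reducing the chain modulo $p$ and using that $\mathrm{im}(\varphi\otimes k)$ is a line — forces $\wt T^\vee/\wt T$ to be killed by $p$. This is the step I expect to be the main obstacle, precisely because it is where the genuinely K3-theoretic hypotheses (the one-dimensional Hodge datum and $p^2H\subseteq\varphi(H)$) must be used rather than the slope alone; note that $p^2H\subseteq\varphi(H)$ on its own would only bound the exponent by $p^2$, so the rank-one input is essential, and the verification is essentially Ogus's. Granting it, $A_{T_H}\cong(\BZ/p)^{2\sigma}$, so over $\BZ_p$ with $p$ odd the Jordan splitting of $T_H$ involves only the scales $p^0$ and $p^1$, yielding an orthogonal decomposition $T_H\cong T_1\perp(T_0,p\langle\cdot,\cdot\rangle)$ with $T_0,T_1$ of perfect form, $\mathrm{rk}\,T_0=2\sigma$ and $\mathrm{rk}\,T_1=\mathrm{rk}_W H-2\sigma$; this is (4), and (3) follows at once.

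\emph{Step 4: uniqueness.} For (2) I would invoke the classification of non-degenerate $\BZ_p$-lattices for odd $p$: such a lattice is determined up to isometry by the ranks and the discriminant classes in $\BZ_p^\times/(\BZ_p^\times)^2$ of its Jordan components. The ranks $\mathrm{rk}_W H-2\sigma$ and $2\sigma$ are fixed by $\sigma$, so it remains to check that the two discriminant units are also determined by the crystal structure, a short computation with the pairing; once this is done, $(T_H,\langle\cdot,\cdot\rangle)$ depends only on $\sigma$.
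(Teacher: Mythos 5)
Your attempt cannot be compared against a proof in the paper, because the paper gives none: this proposition is stated as background and attributed to Ogus \cite{MR563467}, so the benchmark is Ogus's original argument. Measured that way, your Steps 1 and 2 are correct and essentially the standard ones: slope purity plus Dieudonn\'e--Manin gives the rank statement (note that this forces $k$ to be algebraically closed, as in Ogus; over a general perfect field the fixed module can be strictly smaller --- think of a supersingular K3 surface over a finite field, whose arithmetic Picard rank can be less than $22$ --- so you should flag that hypothesis rather than silently passing to $\overline{k}$ and back), the relation $\langle \varphi x,\varphi y\rangle=p^{2}\sigma(\langle x,y\rangle)$ gives $\mathbb{Z}_p$-valuedness, the index computation against the unimodular lattice $H$ gives $\mathrm{ord}_p(A_{T_H})=2\,\mathrm{length}_W(H/\wt{T})$, and $\wt{T}\neq H$ because $\varphi\otimes k\neq 0$. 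The identity $\varphi(\wt{T}^{\vee})\subseteq p\,\wt{T}^{\vee}$ at the start of Step 3 is also correct.

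The problem is that both statements carrying the real content of the proposition are conceded rather than proved. First, the $p$-elementariness of $A_{T_H}$, which you rightly call the crux and on which items (3) and (4) rest, is not established: ``reducing the chain modulo $p$ and using that $\mathrm{im}(\varphi\otimes k)$ is a line forces $\wt{T}^{\vee}/\wt{T}$ to be killed by $p$'' is a gesture at the inputs, not an argument, and you explicitly write that ``the verification is essentially Ogus's'' and proceed by ``granting it''; in Ogus's paper this step is a genuine piece of $\sigma$-linear algebra, not a routine check. Second, item (2) is deferred in the same way: after correctly reducing it to identifying the discriminant units of the Jordan blocks $T_0$ and $T_1$, you postpone this as ``a short computation with the pairing''. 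It is neither short nor purely a pairing computation: the unit of $T_0$ is pinned down by the fact that the induced quadratic form on $T_0/pT_0$ is non-neutral, and that is itself a theorem of Ogus whose proof uses the characteristic subspace $H/\wt{T}\subset \wt{T}^{\vee}/\wt{T}$ (a totally isotropic $\sigma$-dimensional $k$-subspace $G$ with $\dim_k(G+FG)=\sigma+1$) --- structure your write-up never produces or exploits. For the unimodular block $T_1$ the situation is worse: $T_1$ does not interact with the discriminant group at all, so it is unclear that any computation with the pairing could determine its discriminant class; whatever rigidifies it must come from a different mechanism, which the proposal does not identify. In sum, you have a correct treatment of the easy half of the statement (rank, integrality, non-degeneracy, $\mathrm{ord}_p(A_{T_H})=2\sigma$, non-perfectness) together with accurate pointers to, but not proofs of, the two hard parts.
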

The positive integer $\sigma$ is called the \emph{Artin invariant} of the K3 crystal $H$ \cite{MR563467}. When $H$ is the second crystalline cohomology of a supersingular K3 surface $X$, we have $1 \leq \sigma(H) \leq 10$.
\subsection{K3 lattices}
The previous subsection indicates that the Néron-Severi lattice $\mathrm{NS}(X)$ of a supersingular K3 surface $X$ plays an important role in the study of supersingular K3 surfaces via the first Chern class map. We say that $\mathrm{NS}(X)$ is a \emph{supersingular K3 lattice} in the sense of the following definition due to Ogus \cite{MR563467}.
\begin{definition} A \emph{supersingular K3 lattice} is an even lattice $(N,\langle -,-\rangle)$ of rank $22$ such that
\begin{enumerate}
\item the discriminant $d(N \otimes_{\mathbb{Z}} \mathbb{Q})$ is $-1$ in $\mathbb{Q}^{\ast}/{\mathbb{Q}^{\ast}}^2$,
\item the signature of $N$ is $(1,21)$, and
\item the lattice $N$ is $p$-elementary for some prime number $p$.
\end{enumerate}
\end{definition} 
When $N$ is the Néron-Severi lattice of a supersingular K3 surface $X$, then the prime number $p$ in the previous definition turns out to be the characteristic of the base field. One can show that if $N$ is a supersingular K3 lattice, then its discriminant is of the form $d(N)=-p^{2 \sigma}$ for some integer $\sigma$ such that $1 \leq \sigma \leq 10$. The integer $\sigma$ is  called the \emph{Artin invariant} of the lattice $N$. If $X$ is a supersingular K3 surface, we call $\sigma(\mathrm{NS}(X))$ the \emph{Artin invariant} of the supersingular K3 surface $X$ and we find that $\sigma(\mathrm{NS}(X))=\sigma(H^2_{\mathrm{crys}}(X/W))$. The following theorem is due to Rudakov and Shafarevich \cite[Section 1]{MR633161}.
\begin{theorem} If $p \neq 2$, then the Artin invariant $\sigma$ determines a supersingular K3 lattice up to isometry.
\end{theorem}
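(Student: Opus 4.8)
The plan is to deduce the statement from the lattice-theoretic machinery already available, namely Proposition \ref{genus} together with Nikulin's theory of discriminant forms and uniqueness in a genus. Fix a prime $p \neq 2$ and let $N, N'$ be two supersingular K3 lattices with the same Artin invariant $\sigma$; the goal is to prove $N \cong N'$. By definition both are even, of rank $22$, of signature $(1,21)$, $p$-elementary, and of discriminant $-p^{2\sigma}$. By Proposition \ref{genus} the genus of an even lattice is pinned down by its signature and its discriminant form, so it suffices to carry out two steps: (i) show that the discriminant forms $q_N$ and $q_{N'}$ are isomorphic, so that $N$ and $N'$ share a genus; and (ii) show that this genus contains a single isometry class.

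For step (i) I would first identify the discriminant group. Since $N$ is $p$-elementary with $|A_N| = |\disc N| = p^{2\sigma}$, we have $A_N \cong (\BZ/p\BZ)^{2\sigma}$. As $p$ is odd, $2$ is invertible, so $q_N$ is equivalent data to a nondegenerate quadratic form on the $\BF_p$-vector space $A_N$ of dimension $2\sigma$; such forms are classified up to isomorphism by their dimension together with the discriminant class in $\BF_p^{\times}/(\BF_p^{\times})^2$ (see \cite[Section 1.8]{1980IzMat..14..103N}). It then remains to see that this discriminant class is forced by $\sigma$, and I would extract it from the signature–discriminant compatibility. By Milgram's Gauss-sum formula,
\[ \frac{1}{\sqrt{|A_N|}} \sum_{x \in A_N} \exp(\pi i\, q_N(x)) = \exp\left( \frac{\pi i}{4}(l_+ - l_-) \right), \]
and for signature $(1,21)$ the right-hand side is $\exp(-5\pi i) = -1$. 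For a nondegenerate form on $\BF_p^{2\sigma}$ the value of this normalized Gauss sum determines, and is determined by, the discriminant class; hence $q_N$ is uniquely determined by $\sigma$ and $p$, and in particular $q_N \cong q_{N'}$.

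For step (ii) the lattice $N$ is even and indefinite of rank $22$, and the minimal number of generators of $A_N$ is $\ell(A_N) = 2\sigma \leq 20$, so $\mathrm{rk}(N) = 22 \geq 2\sigma + 2 = \ell(A_N) + 2$. By Nikulin's uniqueness criterion for indefinite lattices (see \cite[Corollary 1.13.3]{1980IzMat..14..103N}, which rests on Eichler's theorem via strong approximation for the spin group), an even indefinite lattice $L$ with $\mathrm{rk}(L) \geq \ell(A_L) + 2$ is unique in its genus. Applying this to $N$ and combining with step (i) yields $N \cong N'$.

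The heart of the argument is step (i): ruling out the second quadratic form on $(\BZ/p\BZ)^{2\sigma}$, which has the correct rank but the wrong discriminant class. This is precisely what the fixed signature accomplishes through Milgram's formula, and the main care required is that the exact relation between the discriminant class and the Gauss sum depends on $p \bmod 4$ and on the chosen normalization of $q_N$; in every case, however, the signature $(1,21)$ selects a single class. Step (ii) is essentially a citation, but I would flag that the rank hypothesis $\mathrm{rk}(N) \geq \ell(A_N) + 2$ holds with equality exactly when $\sigma = 10$, the maximal Artin invariant, so one must confirm that Nikulin's criterion still applies at this boundary value.
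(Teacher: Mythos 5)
Your proof is correct, but it is genuinely different from what the paper does: the paper offers no argument at all for this theorem, citing it as a result of Rudakov and Shafarevich \cite[Section 1]{MR633161}, who establish it by explicitly classifying $p$-elementary even lattices of signature $(1,21)$. Your route instead derives it from the discriminant-form machinery that the paper already has on hand: Proposition \ref{genus} reduces the problem to (i) showing the discriminant form is determined by $(p,\sigma)$ and (ii) uniqueness of the lattice within its genus. Step (i) is sound — for odd $p$ the form on $A_N \cong (\BZ/p\BZ)^{2\sigma}$ is equivalent to a nondegenerate $\BF_p$-quadratic form, classified by rank and discriminant class, and Milgram's formula with $l_+ - l_- = -20 \equiv 4 \pmod 8$ forces the normalized Gauss sum to equal $-1$, which selects exactly one of the two discriminant classes (which one depends on $p \bmod 4$ and the parity of $\sigma$, as you note, but it is always a single class). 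Step (ii) is a correct application of Nikulin's uniqueness theorem for indefinite even lattices with $\mathrm{rk} \geq \ell(A) + 2$; the hypothesis there is a non-strict inequality, so the boundary case $\sigma = 10$ you flag is indeed covered, and the same condition ($2 \leq \ell(A_{\mathrm{NS}(X)}^{(p)}) \leq 20$) is exactly what the paper invokes in Lemma \ref{surNS} via \cite[Theorem 1.14.2]{1980IzMat..14..103N}, so your argument meshes naturally with the paper's toolkit. What the two approaches buy: the citation to Rudakov--Shafarevich also delivers existence and explicit models of the lattices (used later in the paper's computational Step 1), whereas your argument is self-contained, shorter, and makes transparent \emph{why} $\sigma$ determines the genus — information the paper's later remarks about genera $(0,12,\delta_{p,\sigma})$ implicitly rely on. The only point to tidy is the exact attribution: double-check whether the uniqueness statement you want is Nikulin's Theorem 1.13.2/Corollary 1.13.3 or Theorem 1.14.2 in the numbering of \cite{1980IzMat..14..103N}, since you cite the corollary but state the hypothesis in the form usually attached to the theorem.
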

\subsection{Characteristic subspaces and K3 crystals}
In this subsection we introduce characteristic subspaces. These objects yield another way to describe K3 crystals, a little closer to classic linear algebra in flavor. For this subsection we fix a prime $p > 2$ and a perfect field $k$ of characteristic $p$ with Frobenius $F \colon k \rightarrow k$, $x \mapsto x^p$.
\begin{definition} Let $\sigma$ be a non-negative integer and let $V$ be a $2\sigma$-dimensional $\mathbb{F}_p$-vector space. A non-degenerate quadratic form
\begin{align*}
\langle -,- \rangle \colon V \times V \rightarrow \mathbb{F}_p.
\end{align*}
on $V$ is called \emph{non-neutral} if there exists no $\sigma$-dimensional isotropic subspace of $V$.
\end{definition}
\begin{definition}
Let $\sigma$ be a non-negative integer and let $V$ be a $2\sigma$-dimensional $\mathbb{F}_p$-vector space together with a non-degenerate and non-neutral quadratic form
\begin{align*}
\langle -,- \rangle \colon V \times V \rightarrow \mathbb{F}_p.
\end{align*}
Set $\varphi \coloneqq \mathrm{id}_V \otimes F \colon V \otimes k \rightarrow V \otimes k$. A $k$-subspace $G \subset V \otimes k$ is called \emph{characteristic} if
\begin{enumerate}
\item $G$ is a totally isotropic subspace of dimension $\sigma$, and
\item $G + \varphi(G)$ is of dimension $\sigma+1$.
\end{enumerate}
A \emph{strictly characteristic subspace} is a characteristic subspace $G$ such that
\begin{align*}
V \otimes k = \sum_{i=0}^{\infty} \varphi^i(G)
\end{align*}
holds true.
\end{definition}
We can now introduce the categories
\begin{align*}
\mathrm{K}3(k) &\coloneqq \left\{\begin{array}{l}\text{Supersingular K3 crystals} \\ \text{with only isomorphisms as morphisms} \end{array} \right\}
\intertext{and}
\mathbb{C}3(k) &\coloneqq \left\{\begin{array}{l}\text{Pairs $(T,G)$, where $T$ is a supersingular} \\ \text{K3 lattice over $\mathbb{Z}_p$, and $G\subseteq T_0 \otimes_{\mathbb{Z}_p} k$} \\
\text{is a strictly characteristic subspace} \\
\text{with only isomorphisms as morphisms} \end{array} \right\}.
\end{align*}
It turns out that over an algebraically closed field these two categories are equivalent.
\begin{theorem}\cite[Theorem 3.20]{MR563467} Let $k$ be an algebraically closed field of characteristic $p > 0$. Then the functor
\begin{align*}
\mathrm{K}3(k) &\lra \mathbb{C}3(k), \\
\left(H, \varphi, \langle -,- \rangle\right) & \longmapsto \left(T_H, \ker\left(T_H \otimes_{\mathbb{Z}_p} k \rightarrow H \otimes_{\mathbb{Z}_p} k\right) \subset T_0 \otimes_{\mathbb{Z}_p} k\right)
\end{align*}
defines an equivalence of categories.
\end{theorem}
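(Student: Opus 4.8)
The plan is to exhibit an explicit quasi-inverse $\mathbb{C}3(k) \lra \mathrm{K}3(k)$ and to check that the two functors are mutually inverse up to natural isomorphism. Since both categories are groupoids, it then suffices to establish essential surjectivity together with the claim that crystal isomorphisms correspond bijectively to isometries of the associated pairs.

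First I would verify that the displayed functor actually lands in $\mathbb{C}3(k)$. That $T_H$ is a supersingular K3 lattice over $\BZ_p$, with orthogonal splitting $(T_H, \langle -,-\rangle) \cong (T_0, p\langle -,-\rangle) \perp (T_1, \langle -,-\rangle)$ and $\dim_k(T_0 \otimes_{\BZ_p} k) = 2\sigma$, is exactly the structure proposition for the Tate module recalled above. Writing $G = \ker(T_H \otimes_{\BZ_p} k \to H/pH)$ for the reduction map, the degeneracy of the restricted pairing forces $G \subseteq T_0 \otimes_{\BZ_p} k$ with $\dim_k G = \sigma$; total isotropy of $G$ is a direct consequence of axiom (4), and the equality $\dim_k(G + \varphi(G)) = \sigma + 1$ is a restatement of the rank-one condition (2) on $\varphi \otimes k$. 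Finally, strictness $\sum_{i \geq 0} \varphi^i(G) = T_0 \otimes k$ is the translation of the fact that the Tate module of $H$ is exactly $T_H$, equivalently that the Artin invariant equals $\sigma$, which rests on the purity of slope $1$ in axiom (5).

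For the quasi-inverse I would reconstruct the crystal rationally on $V_K := T \otimes_{\BZ_p} K$, where $K := W[\tfrac1p]$. Let $\Phi$ be the $\sigma$-linear bijection of $V_K$ extending the identity of $T$, and set $\varphi := p\Phi$; then $\varphi$ restricts to multiplication by $p$ on $T$, so the content is to pin down the correct $W$-lattice. The subspace $G$ does this: choosing a lift $\widetilde G \subseteq T_0 \otimes W$ of $G$, one sets $H := (T \otimes_{\BZ_p} W) + \tfrac1p \widetilde G$, which is independent of the lift precisely because $G$ is totally isotropic. One then checks the axioms: $\varphi(H) \subseteq H$ and $p^2 H \subseteq \varphi(H)$ (axiom (1)) are immediate from the formula; the induced $\varphi \otimes k$ has rank $\dim_k(G + \varphi(G)) - \dim_k G = 1$ (axiom (2)); the index computation $[H : T \otimes W] = p^\sigma$ against $\disc(T) = \pm p^{2\sigma}$ shows the extended form is perfect (axiom (3)); the scaling relation (4) is automatic from $\varphi = p\Phi$ and the fact that $\Phi$ preserves the form up to $\sigma$; and slope-$1$ purity (5) follows since $\Phi$ is a unit-root bijection. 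A short computation then identifies the reduction kernel of $T_H \otimes_{\BZ_p} k \to H/pH$ with $G$ again.

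Finally I would produce the natural isomorphisms. A $\varphi$-stable $W$-lattice in $V_K$ is determined by its Tate module together with its reduction kernel, so the round trip $H \mapsto (T_H, G) \mapsto H'$ gives $H' = H$ canonically inside $V_K$, and the reverse composite is handled symmetrically. An isometry $T \to T'$ carrying $G$ to $G'$ extends uniquely and $K$-linearly to $V_K \to V'_K$ commuting with $\Phi$, hence with $\varphi$, and matches the lattices $H, H'$ and the forms; conversely every crystal isomorphism restricts to such an isometry of pairs, which gives full faithfulness. The hard part is the reconstruction together with the verification that the resulting object is genuinely a supersingular K3 crystal \emph{with Tate module exactly $T$}: this is precisely where strictness of $G$, and not mere characteristicity, is needed, and where the algebraic closedness of $k$ is essential, both to solve the $\sigma$-linear equations defining lifts and bases and to ensure the slope structure behaves as expected.
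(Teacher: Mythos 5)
The paper itself gives no proof of this statement: it is quoted as background, with proof deferred to Ogus (Theorem~3.20 of \emph{Supersingular K3 crystals}), so the only meaningful comparison is with Ogus's original argument. Your outline reconstructs that argument correctly in its main lines: check that the forward functor lands in $\mathbb{C}3(k)$, build the quasi-inverse by setting $H \coloneqq (T\otimes_{\mathbb{Z}_p} W) + \tfrac1p\widetilde{G}$ inside $T\otimes_{\mathbb{Z}_p} K$ with $\varphi = p\,(\mathrm{id}_T\otimes\sigma)$, verify the five axioms by the index and rank computations you indicate, and get full faithfulness from the fact that $H$ sits canonically between $T_H\otimes W$ and $\tfrac1p(T_H\otimes W)$, so that an isometry of pairs extends uniquely to an isomorphism of crystals. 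You also correctly isolate the two genuinely nontrivial inputs: that $\mathrm{rk}_{\mathbb{Z}_p}T_H = \mathrm{rk}_W H$ (slope-$1$ purity plus $k$ algebraically closed), and that strictness of $G$ is exactly what makes the Tate module of the reconstructed crystal equal to $T$ rather than larger.

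Three points should be repaired or made precise. First, total isotropy of $G$ is not a consequence of axiom (4) but of axiom (3): for $x,y \in pH\cap(T_H\otimes W)$ one has $\langle x,y\rangle \in p^2\langle H,H\rangle \subseteq p^2W$, and after splitting off the $T_1$-components this says precisely that the induced perfect form on $T_0\otimes k$ vanishes on $G$. Second, independence of $H$ from the choice of lift $\widetilde{G}$ is automatic, since any two lifts differ by elements of $p(T_0\otimes W) \subseteq T\otimes W$; what isotropy actually buys you is that the form is $W$-valued on $\tfrac1p\widetilde{G}$, which together with your discriminant count gives axiom (3) for $H$. Third, the sentence ``this is precisely where strictness is needed'' should be replaced by the actual mechanism: since $G$ is maximal isotropic, $G = G^{\perp}$, hence $\bigcap_{i}\varphi^i(G) = \bigl(\sum_{i}\varphi^i(G)\bigr)^{\perp}$, so strictness is equivalent to $\bigcap_i\varphi^i(G)=0$, which (as $k$ is algebraically closed, so that $\varphi$-stable subspaces descend to $\mathbb{F}_p$) is equivalent to $G$ containing no nonzero $\mathbb{F}_p$-rational vector; and such vectors correspond bijectively to elements of $\bigl(H\cap(T\otimes\mathbb{Q}_p)\bigr)\setminus T$, i.e.\ to excess Tate classes. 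With these repairs your sketch is a faithful reproduction of Ogus's proof.
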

If we denote by $\mathbb{C}3(k)_{\sigma}$ the subcategory of $\mathbb{C}3(k)$ consisting of objects $(T,G)$ where $T$ is a supersingular K3 lattice of Artin invariant $\sigma$, then there is a coarse moduli space.
\begin{theorem}\cite[Theorem 3.21]{MR563467} Let $k$ be an algebraically closed field of characteristic $p > 0$. We denote by $\mu_n$ the cyclic group of $n$-th roots of unity. There exists a canonical bijection
\begin{align*}
\left(\mathbb{C}3(k)_{\sigma}/\simeq\right) \lra \mathbb{A}^{\sigma-1}_k(k)/\mu_{p^{\sigma}+1}(k).
\end{align*}

\end{theorem}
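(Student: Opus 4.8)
The plan is to combine the uniqueness of the supersingular K3 lattice with an explicit period map, so that the abstract classification of objects of $\mathbb{C}3(k)_{\sigma}$ becomes a concrete piece of semilinear algebra over $k$. First I would fix one supersingular K3 lattice $T$ of Artin invariant $\sigma$; by the theorem of Rudakov and Shafarevich this loses nothing, since any two such lattices are isometric. Every object of $\mathbb{C}3(k)_{\sigma}$ is then isomorphic to one of the form $(T,G)$, and two such objects $(T,G)$ and $(T,G')$ are isomorphic exactly when $G'=\bar g(G)$ for some $\bar g$ in the image of the reduction map $O(T)\to O(V)$, where $V\coloneqq T_0\otimes_{\mathbb{Z}_p}\mathbb{F}_p$ is the $2\sigma$-dimensional non-neutral $\mathbb{F}_p$-quadratic space attached to $T$. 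A standard lifting argument for $p$-adic lattices shows this reduction is surjective, so isomorphism classes in $\mathbb{C}3(k)_{\sigma}$ are precisely the $O(V)$-orbits of strictly characteristic subspaces $G\subset V\otimes_{\mathbb{F}_p}k$, with $\varphi=\mathrm{id}_V\otimes F$.

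The second step is the internal structure of a strictly characteristic $G$. Writing $U_i\coloneqq\sum_{j=0}^{i}\varphi^j(G)$, I would show the defining conditions force $\dim U_i=\sigma+i$ for $0\le i\le\sigma$, so that $U_\sigma=V\otimes k$ is exactly the strictly characteristic hypothesis. Since $G$ is maximal isotropic, $\varphi^j(G)^{\perp}=\varphi^j(G)$, and hence the perpendiculars $G_i\coloneqq\bigcap_{j=0}^{i}\varphi^j(G)=U_i^{\perp}$ satisfy $\dim G_i=\sigma-i$; in particular $\ell\coloneqq G_{\sigma-1}$ is a line. The key observation is that $\varphi^{-j}(\ell)\subseteq G$ for $0\le j\le\sigma-1$ and that these $\sigma$ lines are independent, so that $G=\bigoplus_{j=0}^{\sigma-1}\varphi^{-j}(\ell)$. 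Thus $G$ is recovered from the single cyclic line $\ell$, and the problem reduces to classifying the admissible lines. Spanning $\ell$ by a vector $v$ and setting $a_m\coloneqq\langle v,\varphi^m(v)\rangle$, total isotropy of $G$ becomes the vanishing $a_0=a_1=\cdots=a_{\sigma-1}=0$ (the negative indices being Frobenius-conjugate, hence redundant): these are $\sigma$ conditions cutting $\mathbb{P}(V\otimes k)=\mathbb{P}^{2\sigma-1}$ down to a locally closed locus of dimension $\sigma-1$, which, after choosing coordinates adapted to the $\varphi$-action, I would identify with the affine space $\mathbb{A}^{\sigma-1}$.

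It remains to identify the residual equivalence. Realising $V$ through the $\mathbb{F}_p$-algebra $\mathbb{F}_{p^{2\sigma}}$ with a suitable trace form — the standard model of the non-neutral space — the isometries of $V$ that preserve the coordinate structure above are exactly the elements of $\mathbb{F}_{p^{2\sigma}}^{\times}$ of norm $1$ over $\mathbb{F}_{p^\sigma}$, a cyclic (Coxeter) torus of order $(p^{2\sigma}-1)/(p^\sigma-1)=p^\sigma+1$, that is $\mu_{p^\sigma+1}$, acting linearly on $\mathbb{A}^{\sigma-1}$. I would then verify that the period map $G\mapsto(\text{coordinates of }\ell)$ is surjective onto $\mathbb{A}^{\sigma-1}(k)$ and that two strictly characteristic subspaces lie in the same $O(V)$-orbit if and only if their coordinates lie in the same $\mu_{p^\sigma+1}$-orbit. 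Combined with the reduction of the first step, this produces the asserted canonical bijection $\left(\mathbb{C}3(k)_{\sigma}/\simeq\right)\lra\mathbb{A}^{\sigma-1}_k(k)/\mu_{p^\sigma+1}(k)$.

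I expect the main obstacle to be the two exactness statements underlying the construction: proving that the flag dimensions grow by exactly one at each step (so that the reconstruction $G=\bigoplus\varphi^{-j}(\ell)$ and the cyclic normal form are legitimate, and that being characteristic is an open, generically automatic condition), and pinning down the residual group precisely, namely that the isometries acting trivially on the periods reduce, modulo those fixing every $G$, to the norm-one torus $\mu_{p^\sigma+1}$ and nothing larger. Correctly matching the $O(V)$-orbit relation with the $\mu_{p^\sigma+1}$-orbit relation, including over the non-generic points of $\mathbb{A}^{\sigma-1}$ where the low-dimensional check (for instance $\sigma=1$, where the two isotropic lines are fused by a reflection in $O(V)$) already shows that the affine rather than projective nature of the target is delicate, is the technical heart of the argument.
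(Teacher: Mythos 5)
Note first that the paper itself gives no proof of this statement: it is quoted from Ogus \cite{MR563467}, so your proposal has to be measured against Ogus's argument. Your first three steps do follow his strategy: fixing the lattice $T$ (though the uniqueness you need is for the $\mathbb{Z}_p$-lattice $T$, which is Ogus's structure theorem for Tate modules quoted in the paper --- Rudakov--Shafarevich concerns the $\mathbb{Z}$-lattice $\mathrm{NS}(X)$), reducing isomorphism classes to $O(V)$-orbits of strictly characteristic subspaces via Hensel lifting of isometries, showing that the flag $U_i=\sum_{j\le i}\varphi^j(G)$ grows by exactly one dimension per step, and reconstructing $G$ from the line $\ell=\bigcap_{j=0}^{\sigma-1}\varphi^j(G)$.

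The genuine gap is in your final step, and it is conceptual, not technical. The group $\mu_{p^\sigma+1}$ of the theorem is \emph{not} the norm-one (Coxeter) torus inside $O(V)$, and no subgroup of $O(V)$ can induce the required action on $\mathbb{A}^{\sigma-1}$: every $g\in O(V)$ is defined over $\mathbb{F}_p$, hence commutes with $\varphi=\mathrm{id}\otimes F$ and preserves the pairing, so $g$ carries $\ell_G$ to $\ell_{g(G)}$ and carries a normalized generator to a normalized generator; consequently the periods of $g(G)$, computed with respect to $g(v)$, coincide with those of $G$. Thus $O(V)$ acts \emph{trivially} on period coordinates, and your proposed criterion ``same $O(V)$-orbit if and only if coordinates in the same orbit of a torus $\mu_{p^\sigma+1}\subset O(V)$'' is internally inconsistent. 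What actually produces $\mu_{p^\sigma+1}$ is the rigidification, which your proposal skips: one must prove $a_\sigma=\langle v,\varphi^\sigma v\rangle\neq0$ and normalize $a_\sigma=1$ (this is also precisely what makes the coordinates $(a_{\sigma+1},\dots,a_{2\sigma-1})$ a well-defined point of \emph{affine}, rather than projective, space --- a point your proposal leaves vague); the residual ambiguity is then the rescaling $v\mapsto\lambda v$ with $\lambda\in k^\times$, $\lambda^{p^\sigma+1}=1$, which acts by $a_{\sigma+i}\mapsto\lambda^{1+p^{\sigma+i}}a_{\sigma+i}=\lambda^{1-p^i}a_{\sigma+i}$. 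This group $\mu_{p^\sigma+1}(k)\subset k^\times$ is abstractly, but not functorially, isomorphic to the Coxeter torus, and conflating the two breaks both directions of the bijection. (The paper's own Proposition \ref{nice} corroborates this: an isometry induced on the discriminant group that fixes the characteristic subspace can only scale $\ell_G$ by a root of unity compatible with \emph{every} nonvanishing period, which generically forces $a=\pm1$, never the full $\mu_{p^\sigma+1}$ for $\sigma\geq 2$.) Finally, even with the correct mechanism, the two substantive verifications remain: injectivity (equal normalized periods imply $O(V)$-conjugacy, proven by matching Gram matrices in the bases $\{\varphi^iv\}$ and descending the resulting isometry to $\mathbb{F}_p$) and surjectivity (constructing a strictly characteristic subspace with prescribed periods), both of which your proposal only gestures at.
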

The previous theorem concerns characteristic subspaces defined on closed points with algebraically closed residue field. Next, we consider families of characteristic subspaces. 
\begin{definition} Let $\sigma$ be a non-negative integer and let $(V, \langle -,- \rangle)$ be a $2\sigma$-dimensional $\mathbb{F}_p$-vector space together with a non-neutral quadratic form. If $A$ is an $\mathbb{F}_p$-algebra, a direct summand $G \subset V \otimes_{\mathbb{F}_p} A$ is called a \emph{geneatrix} if $\mathrm{rk}(G)= \sigma$ and $\langle -,- \rangle$ vanishes when restricted to $G$. A \emph{characteristic geneatrix} is a geneatrix $G$ such that $G + F_A(G)$ is a direct summand of rank $\sigma+ 1$ in $V \otimes_{\mathbb{F}_p} A$. We write $\underline{M}_V(A)$ for the set of characteristic geneatrices in $V \otimes_{\mathbb{F}_p} A$.
\end{definition}
It turns out that there exists a moduli space for characteristic geneatrices.
\begin{proposition}\cite[Proposition 4.6]{MR563467} The functor
\begin{align*}
(\text{$\mathbb{F}_p$-algebras})^{\mathrm{op}} &\lra (\text{Sets}), \\
A &\longmapsto \underline{M}_V
\end{align*}
is representable by an $\mathbb{F}_p$-scheme $M_V$ which is smooth, projective and of dimension $\sigma-1$.
\end{proposition}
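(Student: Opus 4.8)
The plan is to realize $\underline{M}_V$ as a subfunctor of the functor of maximal isotropic direct summands of $V$ and then to analyze the cutting-out condition infinitesimally. First I would recall that the functor sending an $\BF_p$-algebra $A$ to the set of rank-$\sigma$ totally isotropic direct summands of $V \otimes_{\BF_p} A$ is represented by the orthogonal Grassmannian $\mathrm{OGr} = \mathrm{OGr}(\sigma, V)$, a closed subscheme of the Grassmannian $\mathrm{Gr}(\sigma, V)$ over $\BF_p$ cut out by the isotropy of the tautological subbundle $\CG$; since the form on $V$ is non-degenerate, $\mathrm{OGr}$ is smooth and projective over $\BF_p$. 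On $\mathrm{OGr}$ the assignment $G \mapsto F_A(G)$ is computed by the absolute Frobenius: writing $\CG^{(p)} = \Frob_{\mathrm{OGr}}^{*}\CG$ for the Frobenius twist of the tautological bundle (which is again a rank-$\sigma$ isotropic subbundle, as the form has $\BF_p$-coefficients), the characteristic condition becomes the requirement that the composite bundle map $\psi \colon \CG^{(p)} \hookrightarrow V \otimes \CO_{\mathrm{OGr}} \twoheadrightarrow (V \otimes \CO_{\mathrm{OGr}})/\CG$ have image a line subbundle, i.e.\ that $\Coker(\psi)$ be locally free of rank $\sigma - 1$. This is a locally closed condition, so $\underline{M}_V$ is represented by a locally closed subscheme $M_V \subseteq \mathrm{OGr}$.

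The first payoff of non-neutrality is projectivity. The locus where $\psi$ has rank $\leq 1$ is closed, while the locus where $\psi$ vanishes identically is the closed locus where $G = F_A(G)$; a geometric point of the latter is a maximal isotropic subspace fixed by Frobenius, hence defined over $\BF_p$, which is impossible because $V$ is non-neutral. Thus $\psi$ is nowhere zero, its rank is $\geq 1$ everywhere, and $M_V = \{\rank \psi \leq 1\}$ is closed in $\mathrm{OGr}$; being closed in a projective scheme, $M_V$ is projective.

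The heart of the matter is smoothness and the exact dimension, and here the crucial input is that the derivative of Frobenius vanishes. Concretely, for a square-zero extension $A' \twoheadrightarrow A$ with kernel $I$ and $p \geq 2$ one has $\Frob_{A'}(a') = (a')^p$ depending only on $a' \bmod I$, so $\Frob_{A'}$ factors through $A$; consequently, for any lift $\wt{G}$ of a geneatrix $G \in \mathrm{OGr}(A)$ to $A'$, the twist $W' \coloneqq F_{A'}(\wt G) = \Frob_{A'}^{*}\wt G$ depends only on $G$, not on the lift. In other words, as $G$ deforms along $A' \to A$, its Frobenius twist $W = F(G)$ stays rigid. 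The characteristic condition for the lift therefore decouples into a purely linear incidence condition: I must lift $G$ to a maximal isotropic $\wt G$ over $A'$ whose intersection with the fixed subspace $W'$ has corank one. This reduces the lifting problem to a deformation within the incidence variety $\Sigma_W = \{G' \in \mathrm{OGr} \mid \dim(G' \cap W) = \sigma - 1\}$ for a fixed maximal isotropic $W$.

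To finish I would show that $\Sigma_W$ is smooth of dimension $\sigma - 1$, so that lifts exist and $M_V$ is formally smooth of that dimension. The map $G' \mapsto G' \cap W$ identifies $\Sigma_W$ with the space of corank-one subspaces of $W$, i.e.\ with $\BP(W^{\vee}) \cong \BP^{\sigma - 1}$: each codimension-one (automatically isotropic) $L \subset W$ determines exactly one maximal isotropic $G' \neq W$ with $G' \cap W = L$, namely the second isotropic line in the hyperbolic plane $L^{\perp}/L$. Relativized over $A'$, the incidence scheme is smooth of relative dimension $\sigma - 1$, so the infinitesimal lifting criterion produces $\wt G \in M_V(A')$; hence $M_V$ is smooth over $\BF_p$. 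A parallel first-order computation identifies the tangent space at $[G]$ with $\{\phi \in \wedge^2 G^{\vee} \mid \phi(G \cap W) \subseteq (G + W)/G\}$, which has dimension $\binom{\sigma}{2} - \binom{\sigma - 1}{2} = \sigma - 1$ at every point, confirming the dimension and purity. The main obstacle is precisely this smoothness statement: one cannot simply invoke generic determinantal theory, since the expected codimension $(\sigma - 1)^2$ of a rank-one locus for a map of rank-$\sigma$ bundles exceeds the true codimension $\binom{\sigma - 1}{2}$, reflecting that $\psi$ is far from generic. It is the vanishing of the Frobenius derivative that rescues the argument, linearizing the characteristic condition so that the geometry is governed by the smooth incidence variety $\Sigma_W \cong \BP^{\sigma-1}$.
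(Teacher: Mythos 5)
The paper contains no proof of this statement at all: it is quoted directly as \cite[Proposition 4.6]{MR563467}, so the only meaningful comparison is with Ogus's original argument, and your proposal is in essence a correct reconstruction of it. All the main points check out: $\underline{M}_V$ sits inside the orthogonal Grassmannian as the locally closed stratum where $\Coker(\psi)$ is locally free of rank $\sigma-1$; non-neutrality kills the Frobenius-fixed locus (a geometric point with $F(G)=G$ would descend to an $\mathbb{F}_p$-rational maximal isotropic subspace of $V$), so the stratum has closed underlying space and, being an immersion with closed image, is a closed subscheme of the projective $\mathrm{OGr}$; and the mechanism you isolate for smoothness is exactly the right one, namely that for a square-zero extension $A' \twoheadrightarrow A$ the Frobenius of $A'$ factors through $A$, so the twist $W'=F_{A'}(\widetilde{G})$ is independent of the chosen lift and the lifting problem linearizes to the incidence scheme $\Sigma_{W'} \cong \mathbb{P}({W'}^{\vee}) \cong \mathbb{P}^{\sigma-1}_{A'}$, whence formal smoothness and the tangent-space dimension $\binom{\sigma}{2}-\binom{\sigma-1}{2}=\sigma-1$. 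Your closing remark that naive determinantal theory cannot work (expected codimension $(\sigma-1)^2$ versus actual codimension $\binom{\sigma-1}{2}$) is also correct and shows you see why the Frobenius rigidity is indispensable.

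One genuine loose end: your argument proves that every component of $M_V$ is smooth and projective with $(\sigma-1)$-dimensional tangent spaces, but it never produces a single point, and the assertion ``of dimension $\sigma-1$'' is vacuous for the empty scheme. This is easy to repair and worth a line. Over $\mathbb{F}_p$, Witt decomposition writes the non-neutral space $V$ as a hyperbolic space of rank $2\sigma-2$ orthogonal to an anisotropic plane $P$. Let $H_0 \subset V$ be a rational $(\sigma-1)$-dimensional isotropic subspace of the hyperbolic part and $\ell \subset P \otimes k$ one of the two isotropic lines over $k=\overline{\mathbb{F}}_p$; these two lines are swapped by $\varphi$ (a fixed one would be defined over $\mathbb{F}_p$, contradicting anisotropy of $P$), so $G = (H_0 \otimes k) \oplus \ell$ satisfies $G+\varphi(G) = (H_0\otimes k)\oplus(P\otimes k)$ of dimension $\sigma+1$, i.e.\ $G$ is a characteristic geneatrix and $M_V \neq \emptyset$. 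A second, purely cosmetic caution: in the projectivity step, keep the flattening-stratum scheme structure (which is what represents $\underline{M}_V$) distinct from the determinantal scheme structure on the rank $\leq 1$ locus; the argument you need is only that a locally closed subscheme with closed underlying space is a closed subscheme, which is correct as you use it.
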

If $N$ is a supersingular K3 lattice with Artin invariant $\sigma$, then $N_0= pN^{\vee}/pN$ is a $2\sigma$-dimensional $\mathbb{F}_p$-vector space together with a non-degenerate and non-neutral quadratic form induced from the bilinear form on $N$.
\begin{definition} We set $\CM_{\sigma} \coloneqq M_{N_0}$ and call this scheme the \emph{moduli space of $N$-rigidified K3 crystals}. \end{definition}
\section{Enriques quotients of K3 surfaces of finite height}\label{lift}
Lieblich and Maulik showed in \cite{2011arXiv1102.3377L} that finite height K3 surfaces in positive characteristic admit well behaved lifts to characteristic zero, and we will use these lifting techniques and the fact that K3 surfaces over the complex numbers only have finitely many Enriques quotients to show that the same holds in positive characteristic. 

Let $k$ be an algebraically closed field and let $X$ be a K3 surface over $k$ with Néron-Severi lattice $\mathrm{NS}(X)$. We denote the \emph{group of isometries of $\mathrm{NS}(X)$} by $O(\mathrm{NS}(X))$. The \emph{positive cone $\CC_X$} is the connected component of $\{x \in \mathrm{NS}(X) \otimes \mathbb{R} \mid x^2 > 0 \} \subseteq \mathrm{NS}(X) \otimes \mathbb{R}$ that contains an ample divisor. The \emph{ample cone $\CA_X$} is the subcone of $\CC_X$ generated as a semigroup by ample divisors multiplied by positive real numbers. The set $\Delta_{\mathrm{NS}(X)} \coloneqq \{l \in \mathrm{NS}(X) \mid l^2=-2\}$ is called the set of \emph{roots of $\mathrm{NS}(X)$}. The \emph{Weyl group $W_{\mathrm{NS}(X)} = W_X$} of $\mathrm{NS}(X)$ is the subgroup of the orthogonal group $O(\mathrm{NS}(X))$ generated by all automorphisms of the form $s_l \colon x \mapsto x + \langle x,l \rangle l$ with $l \in \Delta_{\mathrm{NS}(X)}$. We set 
\begin{align*} O^{+}(\mathrm{NS}(X)) &\coloneqq \{ \varphi \in O(\mathrm{NS}(X)) \mid \varphi(\CA_X) = \CA_X \} 
 \intertext{to be the group of isometries of $\mathrm{NS}(X)$ that preserve the ample cone. Further, we define}
O_0(\mathrm{NS}(X)) &\coloneqq \ker\left(O(\mathrm{NS}(X)) \rightarrow O(q_{\mathrm{NS}(X)})\right)
\intertext{and} O_0(\mathrm{NS}(X))^{+} &\coloneqq O_0(\mathrm{NS}(X)) \cap O^{+}(\mathrm{NS}(X)). \end{align*}
We will need the following easy lemma.
\begin{proposition} \label{conj} Let $X$ be a K3 surface over an arbitrary field $k$ and let $\iota_1$ and $\iota_2$ be fixed point free involutions on $X$. Then the Enriques surfaces $X/\iota_1$ and $X/\iota_2$ are isomorphic if and only if there exists some automorphism $g \in \mathrm{Aut}(X)$ such that $g\iota_1g^{-1}=\iota_2$. \end{proposition}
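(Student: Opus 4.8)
The plan is to treat the two implications separately, with the forward direction a routine descent and the reverse direction carrying the real content: there I would lift the isomorphism of Enriques quotients to an automorphism of the common K3 cover and then read off the conjugacy from the Galois structure of the two quotient maps.

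For the easy direction, suppose $g \in \mathrm{Aut}(X)$ satisfies $g\iota_1 g^{-1} = \iota_2$. Then $g$ carries each $\iota_1$-orbit $\{x, \iota_1 x\}$ to the $\iota_2$-orbit $\{gx, \iota_2 g x\}$, so $g$ descends to a morphism $\bar g \colon X/\iota_1 \to X/\iota_2$. Applying the same reasoning to $g^{-1}$, which conjugates $\iota_2$ to $\iota_1$, produces a two-sided inverse, so $\bar g$ is an isomorphism of Enriques surfaces.

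For the converse, I would write $\pi_i \colon X \to Y_i \coloneqq X/\iota_i$ for the quotient maps. Since each $\iota_i$ is a fixed-point-free involution, $\pi_i$ is a finite étale Galois double cover with Galois group $\langle \iota_i \rangle$. Given an isomorphism $\phi \colon Y_1 \to Y_2$, the first step is to produce a lift $g \colon X \to X$ with $\pi_2 \circ g = \phi \circ \pi_1$. To obtain it I would form the fiber product $Z \coloneqq X \times_{Y_2} X$, taken along $\phi \circ \pi_1$ on the first factor and $\pi_2$ on the second; projection to the first factor exhibits $Z$ as a finite étale double cover of $X$. Because a K3 surface is simply connected (its étale fundamental group vanishes in every characteristic), this cover is trivial and admits a section, and composing such a section with the second projection yields a morphism $g \colon X \to X$ with $\pi_2 g = \phi \pi_1$. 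A count of degrees shows $g$ is finite and birational onto the smooth, hence normal, surface $X$, so $g$ is an isomorphism; alternatively one may invoke directly the uniqueness of the K3 cover of an Enriques surface recalled in the introduction.

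With the lift in hand the conclusion is formal. Since $\pi_1 \circ \iota_1 = \pi_1$, we get $\pi_2 \circ (g \iota_1) = \phi \pi_1 \iota_1 = \phi \pi_1 = \pi_2 \circ g$, so $g\iota_1$ and $g$ are two lifts of $\phi \pi_1$ through the connected Galois cover $\pi_2$ and therefore differ by an element of its deck group, say $g \iota_1 = \tau g$ with $\tau \in \langle \iota_2 \rangle$. The case $\tau = \mathrm{id}$ would force $\iota_1 = \mathrm{id}$, which is impossible, so $\tau = \iota_2$ and hence $g\iota_1 g^{-1} = \iota_2$, as required. The only genuine obstacle is the existence of the lift $g$; once simple-connectedness of the K3 cover (or the uniqueness of the K3 cover) supplies it, the remaining deck-transformation bookkeeping is immediate.
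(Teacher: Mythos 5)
Your overall strategy is the algebraic version of the proof the paper actually cites (Ohashi's Proposition 2.1): the easy implication by descending $g$ to the quotients, and the hard implication by lifting an isomorphism $\phi \colon X/\iota_1 \to X/\iota_2$ through the covering $\pi_2$ and then identifying $g\iota_1 g^{-1}$ with the nontrivial deck transformation of $\pi_2$. The forward direction, the finiteness/degree argument showing the lift is an automorphism, and the rigidity step ($g$ and $g\iota_1$ are lifts of the same map from a connected source, hence differ by a deck transformation, which cannot be the identity) are all correct as far as they go.

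The gap sits in the one step you yourself single out as carrying the content: the existence of the lift. You justify triviality of the double cover $Z = X \times_{Y_2} X \to X$ by asserting that the \'etale fundamental group of a K3 surface ``vanishes in every characteristic''. That is true over a separably (in particular algebraically) closed field, but the proposition is stated over an \emph{arbitrary} field $k$, and there $\pi_1^{\mathrm{et}}(X)$ is not trivial: since $\pi_1^{\mathrm{et}}(X_{k_s})=0$, it is isomorphic to $\mathrm{Gal}(k_s/k)$. Concretely, in characteristic $\neq 2$ one has $H^1_{\mathrm{et}}(X,\mathbb{Z}/2\mathbb{Z}) \cong k^{\ast}/(k^{\ast})^2$, so a degree-$2$ \'etale cover of $X$ that is geometrically trivial need not be trivial: it can be the twist $X \times_k L$ for a separable quadratic extension $L/k$, which is connected and admits no section. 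Your fiber product $Z$ is exactly such a $\mathbb{Z}/2\mathbb{Z}$-torsor --- its class is $(\phi\pi_1)^{\ast}[\pi_2]$, which a priori lies in this arithmetic part --- and nothing in your argument rules out that it is a nontrivial twist, in which case the lift $g$ of $\phi$ simply does not exist. So as written your proof establishes the proposition only for separably closed $k$; that is all the paper ever uses (every application is over an algebraically closed field), but it is not the stated generality, and handling arbitrary $k$ would require confronting this Galois-twist obstruction rather than quoting simple connectedness. To be fair, the paper's own proof --- a citation of Ohashi's universal-cover argument together with the claim that it does not depend on the base field --- passes over exactly the same point in silence.
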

\begin{proof} This is \cite[Proposition 2.1.]{MR2319542}. The proof does not depend on the base field. \end{proof}
\begin{theorem} \label{finht} Let $X$ be a K3 surface over an algebraically closed field $k$. If $X$ is of finite height, then the number of isomorphism classes of Enriques quotients of $X$ is finite. \end{theorem}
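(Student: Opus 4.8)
The plan is to reduce the statement to a finiteness assertion about involutions and then import the corresponding complex-analytic finiteness through a characteristic-zero lift. By Proposition~\ref{conj}, two Enriques quotients $X/\iota_1$ and $X/\iota_2$ are isomorphic exactly when $\iota_1$ and $\iota_2$ are conjugate in $\Aut(X)$, so it suffices to prove that $X$ carries only finitely many fixed point free involutions up to conjugacy. Each such involution $\iota$ induces an isometry $\iota^{*} \in O^{+}(\mathrm{NS}(X))$ of order dividing $2$, and I would organise the count according to the isometry class of this induced action.

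First I would analyse the lattice data. For a fixed point free involution the invariant sublattice $\mathrm{NS}(X)^{\iota^{*}}$ is isometric to $\Gamma(2)$ and its inclusion is a primitive embedding lying in $\mathfrak{M}$; conversely $\iota^{*}$ is recovered from this embedding as the isometry acting by $+1$ on the invariant sublattice and by $-1$ on its orthogonal complement. By Nikulin's theory of primitive embeddings \cite{1980IzMat..14..103N} there are only finitely many $O(\mathrm{NS}(X))$-orbits of elements of $\mathfrak{M}$, and hence only finitely many $O(\mathrm{NS}(X))$-conjugacy classes of isometries $\iota^{*}$ that can arise in this way. This is the purely lattice-theoretic, characteristic-independent part of the argument.

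The remaining and essential step is to pass from finitely many conjugacy classes of the induced isometries to finitely many conjugacy classes of the geometric involutions themselves, and this is where the finite height hypothesis enters through the work of Lieblich and Maulik. Since $X$ has finite height, I would arrange a projective lift $\mathcal{X} \to \Spec R$ to a complete local ring with residue field $k$ and fraction field of characteristic zero for which the specialisation map identifies $\mathrm{NS}(\mathcal{X}_{\bar\eta})$ with $\mathrm{NS}(X)$ as lattices preserving the ample cone, and for which the formation of automorphisms, and of a given involution, is compatible with specialisation. Under this identification each fixed point free involution of $X$ corresponds to a fixed point free involution of the complex K3 surface $\mathcal{X}_{\bar\eta}$ with the same action on $\mathrm{NS}$, so the finiteness of Enriques quotients of $\mathcal{X}_{\bar\eta}$ --- which holds by the complex Torelli theorem, concretely in the form of Ohashi's theorem quoted in the introduction --- bounds the number downstairs and yields the claim.

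I expect the main obstacle to be exactly this bridge between the lattice-theoretic and the geometric conjugacy problems: an abstract isometry of $\mathrm{NS}(X)$ need not be realised by an automorphism, and the automorphisms acting trivially on $\mathrm{NS}(X)$ must be controlled, so one cannot simply read off geometric conjugacy classes from $O(\mathrm{NS}(X))$-conjugacy classes. Overcoming it requires the full force of the Lieblich--Maulik lifting theory, together with the compatibility of specialisation on automorphism groups, and this is precisely the point at which supersingular surfaces are excluded, since their rank $22$ Néron-Severi lattice admits no Néron-Severi-preserving lift to a characteristic zero K3 surface.
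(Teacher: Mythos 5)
Your reduction is sound up to the last step: passing to conjugacy classes of fixed point free involutions via Proposition~\ref{conj}, and the lattice-theoretic finiteness (invariant lattice isometric to $\Gamma(2)$, finitely many $O(\mathrm{NS}(X))$-orbits on $\mathfrak{M}$ by Nikulin) are exactly the ingredients the paper also uses, packaged there as \cite[Lemma 1.4(a),(c)]{MR2319542}. The gap is your bridge. You assume there is a \emph{single} lift $\mathcal{X} \to \mathrm{Spec}\, R$ of $X$ such that, besides $\mathrm{NS}(\mathcal{X}_{\bar\eta}) \cong \mathrm{NS}(X)$, every fixed point free involution of $X$ arises by specialization from $\mathcal{X}_{\bar\eta}$. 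Specialization (Matsusaka--Mumford) gives an \emph{injective} homomorphism $\mathrm{Aut}(\mathcal{X}_{\bar\eta}) \to \mathrm{Aut}(X)$, but surjectivity --- even just on involutions --- is precisely what is not available: Lieblich--Maulik do not prove that arbitrary automorphisms of $X$ lift along a fixed N\'eron--Severi-preserving lift, and the paper's Remark~\ref{near0} stresses that lifts with $\mathrm{Aut}(\mathcal{X}_{1}) \to \mathrm{Aut}(X)$ an isomorphism are only known for special classes of finite height K3 surfaces (ordinary ones, and weakly tame ones, e.g.\ all finite height K3 surfaces when $p \geq 23$). Each individual Enriques involution does lift (for instance by lifting its Enriques quotient), but a priori along \emph{different} lifts of $X$, whose geometric generic fibres are different complex K3 surfaces, so Ohashi's finiteness theorem cannot be applied on one complex surface to bound all involutions of $X$ at once. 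Note also that the statement covers $p=2$, where an involution is a wild automorphism and equivariant lifting is even more delicate.

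The paper's proof avoids lifting involutions altogether: it invokes \cite[Theorem 6.1(1)]{2011arXiv1102.3377L} as a statement about $\mathrm{Aut}(X)$ \emph{in characteristic $p$} --- the finiteness properties of its image in $O(\mathrm{NS}(X))$ and of its action on the nef cone --- observing that this statement persists for finite height K3 surfaces in characteristic $2$; the characteristic-zero input is thus confined to the interior of Lieblich--Maulik's proof. This is then combined with the general finiteness theorem \cite[Theorem 4.3]{MR1278263}, with \cite[Lemma 1.4(a),(c)]{MR2319542}, and with Proposition~\ref{conj} to conclude that $\mathrm{Aut}(X)$ has finitely many orbits on the relevant set of sublattices, hence finitely many conjugacy classes of fixed point free involutions. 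To repair your argument, replace the ``lift all involutions'' step by exactly this: use Lieblich--Maulik's control of the kernel and image of $\mathrm{Aut}(X) \to O(\mathrm{NS}(X))$ to transfer your $O(\mathrm{NS}(X))$-orbit finiteness on $\mathfrak{M}$ to $\mathrm{Aut}(X)$-conjugacy finiteness, rather than trying to realize all involutions on one characteristic-zero model.
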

\begin{proof} Note that \cite[Theorem 6.1(1)]{2011arXiv1102.3377L} also holds for K3 surfaces of finite height over a field of characteristic $p=2$. Then \cite[Theorem 4.3]{MR1278263} together with \cite[Lemma 1.4(a),(c)]{MR2319542} and Proposition \ref{conj} implies the result. \end{proof}

\begin{remark}\label{near0} The theory of Enriques quotients of K3 surfaces of finite height is closely related to the characteristic zero situation. In many cases, given a finite height K3 surface $X$, we can choose a Neron-Severi group preserving lift $\CX_1$ of $X$ such that the specialization morphism $\gamma \colon \Aut(\CX_1) \lra \Aut(X)$ is an isomorphism. 

This is possible, for example, when $X$ is \emph{ordinary}, that means $X$ is of height $1$ \cite{MR723215} \cite[Theorem 4.11]{MR3992316} \cite[Proposition 2.3]{laface2019ordinary}.  Another class for which such lifts exist are the so-called \emph{weakly tame} K3 surfaces over fields of characteristic $p \geq 3$. In particular, every K3 surface of finite height over a field $k$ of characteristic $p \geq 23$ is weakly tame. For definitions and details we refer to \cite{MR3658183}.

In these situations we can then use the results from \cite{MR2319542} to obtain the number of isomorphism classes of Enriques quotients of $X$.   \end{remark}
%\begin{proposition} Let $X$ be an Artin-supersingular K3 surface over an algebraically closed field $k$. If $\mathrm{rk}(\mathrm{NS}(X)) \leq 4$, then the number of isomorphism classes of Enriques quotients of $X$ is finite. \end{proposition}
%\begin{proof} By \cite[Proposition 3.1.]{2011arXiv1102.3377L} there exists a smooth projective relative K3 surface $\CX$ over $k \llbracket t \rrbracket$ with closed fiber $X$ and general fiber some K3 surface $\CX_1$ of finite height, such that the specialization morphism $\mathrm{sp} \colon \mathrm{NS}(\CX_1) \rightarrow \mathrm{NS}(X)$ is an isomorphism. The diagram
%\begin{center} \begin{tikzpicture}
%  \matrix (m) [matrix of math nodes,row sep=3em,column sep=4em,minimum width=2em]
%  {\mathrm{Aut}(\CX_1) & O\left(\mathrm{NS}(\CX_1)\right) \\
%     \mathrm{Aut}(X) & O\left(\mathrm{NS}(X)\right) \\};
%     \path[-stealth]
%    (m-1-1) edge node [left] {$\gamma$} (m-2-1)
%            edge node [above] {$\tilde{r}$} (m-1-2)
%    (m-2-1.east|-m-2-2) edge node [above] {$r$}
%            node [above] {} (m-2-2)
%    (m-1-2) edge node [right] {$\mathrm{sp}^{\ast}$} (m-2-2);
%\end{tikzpicture}
%\end{center} 
%commutes and we can conclude as in the proof of Proposition \ref{finht}.
% \end{proof}
%\begin{remark} Of course, the situation of the last proposition can only occur when $X$ is not Shioda-supersingular, so $k$ is of characteristic two or three.
%\end{remark} 
\section{The supersingular case}\label{ssenq}
Let $X$ be a supersingular K3 surface over an algebraically closed field $k$ of characteristic $p \geq 3$. The following proposition shows that $X$ only has finitely many isomorphism classes of Enriques quotients.
\begin{proposition} Let $X$ be a supersingular K3 surface over an algebraically closed field $k$ of characteristic $p \geq 3$. The number of isomorphism classes of Enriques quotients of $X$ is finite. \end{proposition}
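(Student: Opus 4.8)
The plan is to reduce the problem to a lattice-theoretic finiteness statement via Proposition \ref{conj}. That proposition identifies isomorphism classes of Enriques quotients of $X$ with $\Aut(X)$-conjugacy classes of fixed-point-free involutions $\iota \colon X \to X$, so it suffices to bound the number of such conjugacy classes. The strategy is to attach to each involution a discrete lattice invariant, show that only finitely many invariants occur, and show that each invariant is realised by finitely many conjugacy classes.

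First I would attach to a fixed-point-free involution $\iota$ its action $\iota^*$ on $\NS(X)$. As in the complex case treated by Ohashi, the $(-1)$-eigenlattice realises a primitive embedding $\Gamma(2) \hookrightarrow \NS(X)$ with no $(-2)$-vector in the orthogonal complement, i.e.\ an element of $\mathfrak{M}$; moreover the crystalline Torelli theorem for supersingular K3 surfaces (Ogus \cite{MR717616}, extended to characteristic $3$ by Bragg--Lieblich \cite{2018arXiv180407282B}) guarantees that $\iota$ is recovered from $\iota^*$ up to the finite kernel of the action on the K3 crystal. Since the image of $\Aut(X)$ in $O(\NS(X))$ is a subgroup, two conjugate involutions produce embeddings lying in a single $O(\NS(X))$-orbit on $\mathfrak{M}$, so the assignment descends to a map from conjugacy classes to $O(\NS(X))$-orbits on $\mathfrak{M}$.

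The first finiteness input is that $\mathfrak{M}$ consists of only finitely many $O(\NS(X))$-orbits. This is routine Nikulin theory \cite{1980IzMat..14..103N}: a primitive embedding of $\Gamma(2)$ into the even lattice $\NS(X)$ is determined up to the action of $O(\NS(X))$ by the isometry class of its orthogonal complement within the complement's genus, together with finitely many gluing data between the discriminant groups. The complement has fixed rank and signature and a discriminant form constrained by those of $\Gamma(2)$ and $\NS(X)$, so only finitely many genera can arise, each of finite class number; hence finitely many orbits $M_1, \dots, M_k$.

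It then remains to bound the number of conjugacy classes mapping to a single orbit. Two involutions inducing the same lattice involution $\iota^*$ differ by an element of the finite kernel of $\Aut(X) \to O(\NS(X))$, and the lattice involutions refining a given orbit differ only by their action on the finite glue, which is governed by the isometry group $O(q_{\NS(X)})$ of the discriminant form. Since $A_{\NS(X)} = \NS(X)^\vee/\NS(X)$ is a finite ($p$-elementary) group, $O(q_{\NS(X)})$ is finite, so each orbit contributes finitely many conjugacy classes; this per-orbit count is exactly what the upper bound in Theorem \ref{form} makes quantitative. Summing over $M_1, \dots, M_k$ yields the result. The main obstacle I anticipate is the second step: arranging the crystalline Torelli theorem so that fixed-point-free involutions correspond cleanly to elements of $\mathfrak{M}$ in a manner valid in characteristic $3$ as well as for $p \geq 5$; by contrast, the orbit finiteness and the finiteness of $O(q_{\NS(X)})$ are standard.
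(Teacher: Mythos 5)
Your argument is correct, but it takes a genuinely different route from the paper's. The paper disposes of this proposition in one line, by reusing the proof of Theorem \ref{finht}: there the finiteness is purely group-theoretic, combining Lieblich--Maulik's theorem \cite[Theorem 6.1(1)]{2011arXiv1102.3377L} (the map $\Aut(X)\to O(\NS(X))$ has finite kernel and finite-index image, valid for supersingular surfaces in odd characteristic), the finiteness result \cite[Theorem 4.3]{MR1278263} for conjugacy classes in the arithmetic group $O(\NS(X))$, Ohashi's transfer lemma \cite[Lemma 1.4(a),(c)]{MR2319542}, and Proposition \ref{conj}; neither the set $\mathfrak{M}$ nor any Torelli theorem appears. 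Your proof instead runs through the crystalline Torelli theorem: Proposition \ref{conj}, then Jang's bijection (Proposition \ref{bij}) between free involutions and $\mathfrak{M}^{\ast}$, then Nikulin's finiteness of the number of $O(\NS(X))$-orbits on $\mathfrak{M}$ \cite[Proposition 1.15.1]{1980IzMat..14..103N}, and finally finiteness of $O(q_{\NS(X)})$ to control each orbit --- which is exactly the counting the paper carries out later, and independently, in Theorem \ref{form}; since the paper states explicitly that the argument for Theorem \ref{form} does not rely on this proposition, your route is not circular within the paper's logic, it simply establishes the quantitative statement first and reads off finiteness. One point in your per-orbit step deserves care: to see that an $O(\NS(X))$-orbit on $\mathfrak{M}$ contributes only finitely many $\Aut(X)$-conjugacy classes, you need the Torelli theorem to guarantee that every ample-cone-preserving isometry acting trivially on the discriminant form is realised by an automorphism, so that the obstruction really lives in the finite group $O(q_{\NS(X)})$ (modulo $\mathrm{pr}(K^{(j)})$); your phrasing about the ``finite glue'' gestures at this but it is where the actual work of Theorem \ref{form} sits. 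The trade-off between the two proofs: the paper's is uniform with the finite-height case and sidesteps the Torelli machinery; yours is quantitative, yielding the explicit Ohashi-style bound, at the price of needing Ogus's theorem (with Bragg--Lieblich for $p=3$), which you rightly flag as the delicate point. Both, as they must, use $p\geq 3$ essentially: in characteristic $2$ the statement is false, since the supersingular K3 surface of Artin invariant $1$ then has infinitely many Enriques quotients.
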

\begin{proof} We can use the same argument as in the proof of Theorem \ref{finht}. \end{proof}

\begin{remark}
Over characteristic $p=2$ the previous result does not hold. Indeed, the supersingular K3 surface of Artin invariant $1$ over a field of characteristic $2$ has infinitely many Enriques quotients \cite{katsura20141dimensional}.
\end{remark}

Our goal for the rest of this section is to find a formula for the number of Enriques quotients of $X$ in the style of \cite[Theorem 2.3]{MR2319542}. The argument does not rely on the previous proposition.

If $Y$ is an Enriques surface, then the torsion free part of its Neron-Severi group $\mathrm{NS}(Y)$ is isomorphic to the lattice $\Gamma =  U_2 \oplus E_8(-1)$, which is up to isomorphism the unique unimodular, even lattice of signature $(1,9)$. Following \cite{MR2319542}, if $X$ is a supersingular K3 surface over a field of characteristic $p \geq 3$, we define
\begin{align*} \mathfrak{M} &\coloneqq \left\{\begin{array}{l|l}
N \subseteq \mathrm{NS}(X)  & \text{primitive sublattices satisfying} \\
 & (A)\colon N \cong \Gamma(2)  \\
 & (B) \colon \text{No vector of square } -2 \text{ in } \mathrm{NS}(X) \text{ is orthogonal to } N 
\end{array} \right\}  \intertext{and}
\mathfrak{M}^{\ast} &\coloneqq \left\{N \in \mathfrak{M} \mid \text{$N$ contains an ample divisor}\right\}.\end{align*}
The following proposition describes free involutions on a supersingular K3 surface in terms of embeddings of lattices.
\begin{proposition}\cite[Theorem 4.1]{2013arXiv1301.1118J} \label{bij} Let $k$ be a an algebraically closed field of characteristic $p \geq 3$. For a supersingular K3 surface $X$ over $k$, there is a natural bijection
\begin{align*} \mathfrak{M}^{\ast} \stackrel{1 : 1}\longleftrightarrow \{\text{free involutions of } X\}. \end{align*}\end{proposition}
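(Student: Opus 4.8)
The plan is to build maps in both directions and show they are mutually inverse, using Ogus's crystalline Torelli theorem (available in characteristic $p \geq 3$ by the Bragg--Lieblich extension cited in the introduction) as the bridge between the geometry of $X$ and the lattice $\NS(X)$. Throughout, the ample cone $\CA_X$ and the characteristic subspace replace the role played by the complex Hodge-theoretic period.

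\emph{From an involution to a sublattice.} Given a fixed point free involution $\iota$, the quotient $Y = X/\langle \iota \rangle$ is an Enriques surface and $\pi \colon X \to Y$ is an étale double cover. I would attach to $\iota$ the invariant sublattice $N \coloneqq \NS(X)^{\iota^{\ast}}$. As the kernel of $\iota^{\ast} - \mathrm{id}$ it is automatically primitive. To check condition (A) I would identify $N$ with $\pi^{\ast}(\NS(Y))_{\mathrm{free}}$: since $\pi$ has degree $2$ the pullback multiplies the form by $2$, so this image is isometric to $\Gamma(2)$, and a discriminant-form computation---using that $\NS(X)$ is $p$-elementary with $p$ odd, so the $2$-elementary invariant part splits off cleanly, together with the Rudakov--Shafarevich/Nikulin uniqueness results---shows it is all of $N$. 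For condition (B), any $(-2)$-vector orthogonal to $N$ would be anti-invariant, hence effective up to sign by Riemann--Roch, and a Nikulin-type argument then forces a fixed point of $\iota$, contradicting freeness. Finally $h + \iota^{\ast}h \in N$ is invariant and ample for any ample $h$, so $N$ contains an ample divisor and $N \in \mathfrak{M}^{\ast}$.

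\emph{From a sublattice to an involution.} Given $N \in \mathfrak{M}^{\ast}$, I define the isometry $\theta$ of $N \oplus N^{\perp}$ equal to $+\mathrm{id}$ on $N$ and $-\mathrm{id}$ on $N^{\perp}$. The crucial point that $\theta$ descends to an isometry of the overlattice $\NS(X)$ is that the gluing group is $2$-torsion (as $A_N \cong (\mathbb{Z}/2\mathbb{Z})^{10}$), on which $+\mathrm{id}$ and $-\mathrm{id}$ agree. Since $\theta$ fixes the ample interior class $h \in N$ it preserves the chamber $\CA_X$; and since $\NS(X)$ is $p$-elementary, $\theta$ acts as $-\mathrm{id}$ on the discriminant group, hence on $N_0$, so it fixes every characteristic subspace and therefore preserves the period. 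By crystalline Torelli, $\theta$ is induced by a unique automorphism $\iota$, which is an involution because $\theta^2 = \mathrm{id}$ and $\Aut(X) \hookrightarrow O(\NS(X))$. Fixed-point-freeness of $\iota$ is precisely condition (B), via the Nikulin-type criterion, which in characteristic $p$ one verifies by computing the canonical class and Euler characteristic of the quotient to confirm that $X/\langle\iota\rangle$ is Enriques rather than having isolated fixed points. I would then check the two assignments are mutually inverse: the invariant lattice of the $\iota$ built from $N$ is again $N$, and the $\theta$ built from the invariant lattice of a given $\iota$ recovers $\iota^{\ast}$.

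The hard part will be the backward direction. First, one must ensure the purely lattice-theoretic $\theta$ genuinely lifts through Ogus's equivalence, which requires careful bookkeeping of the characteristic subspace and the ample cone rather than the transcendental arguments available over $\mathbb{C}$. Second, and more seriously, the fixed-point-freeness must be established by a characteristic-$p$ computation replacing the classical Nikulin argument, showing that the quotient map is étale and the quotient is Enriques. By comparison, the forward direction is largely lattice bookkeeping, whose only subtle point is the identification $N \cong \Gamma(2)$ ruling out a strictly larger invariant lattice.
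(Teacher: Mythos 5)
Your proposal follows essentially the same route as the paper's proof (which is itself a sketch of Jang's Theorem 4.1): in the forward direction you take the invariant lattice $\pi^{\ast}(\mathrm{NS}(Y))$ with property (B) checked by Riemann--Roch and anti-invariance, and in the backward direction you extend $\mathrm{id}_N \oplus (-\mathrm{id}_{N^{\perp}})$ across the $2$-torsion glue, note that it acts as $-\mathrm{id}$ on the discriminant group and fixes an ample class, and invoke Ogus's crystalline Torelli theorem, exactly as the paper does. The one misstep is your forward check of (B): no fixed-point or Nikulin-type argument is needed there, since once an anti-invariant $(-2)$-class $v$ is (without loss of generality) effective, $\iota^{\ast}v = -v$ is also effective, which is already absurd --- this immediate contradiction is the paper's argument.
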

\begin{proof}[Idea of proof] For the convenience of the reader, we briefly recall the idea of the proof.

First, let $\iota \colon X \rightarrow X$ be a free involution of $X$ and let $f\colon X \rightarrow Y$ be the associated Enriques quotient. Since the map $f$ is finite étale of degree $2$, we obtain a primitive embedding of lattices 
\begin{align*}U(2) \oplus E_8(2) \cong f^{\ast}(\mathrm{NS}(Y)) \hookrightarrow \mathrm{NS}(X). \end{align*}
We write $N=f^{\ast}(\mathrm{NS}(Y))$ and $M = N^{\perp}$, such that 
\begin{align*} 
N= \{ v \in \mathrm{NS}(X)\mid \iota^{\ast}(v)=v\} \text{ and } M=\{ v \in \mathrm{NS}(X)\mid \iota^{\ast}(v)=-v\}.   \end{align*}
Then $N$ has property $(B)$: By the Riemann-Roch theorem, if $v$ is a $(-2)$-divisor on $X$, then $v$ or $-v$ is effective. Thus, if $v \in M$ was a $(-2)$-divisor, then both $v$ and $-v$ are effective, which is absurd. Pullback along finite morphisms preserves ampleness, hence $N$ contains an ample line bundle and we have shown that $N \in \mathfrak{M}^{\ast}$. 

On the other hand, assume we are given some $N \in \mathfrak{M}^{\ast}$ and define
\begin{align*}\psi \colon N \oplus N^{\perp} &\lra N \oplus N^{\perp}, \\
(v,w) &\longmapsto (v, -w). \end{align*}
Then $\psi$ extends to $\mathrm{NS}(X)$ \cite[Lemma 4.2.]{2013arXiv1301.1118J} and by the supersingular Torelli theorem \cite{MR717616} induces an involution $\iota$ on $X$. From condition (B) it then follows that $\iota$ indeed has no fixed points. \end{proof}
If $A$ is a finitely generated abelian group and $q$ a prime number, we denote by $A^{(q)}$ the $q$-torsion part of $A$ and by $l(A)$ the minimal cardinality among all sets of generators of $A$.
\begin{lemma}\label{surNS} Let $k$ be an algebraically closed field of characteristic $p \geq 3$ and $X$ a supersingular K3 surface over $k$. The canonical morphism $\mathrm{pr} \colon O(\mathrm{NS}(X)) \rightarrow O(q_{\mathrm{NS}(X)})$ is surjective. \end{lemma}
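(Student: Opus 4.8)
The plan is to deduce the statement from Nikulin's general criterion for the surjectivity of the natural homomorphism $\mathrm{pr}\colon O(L)\to O(q_L)$ of an even lattice $L$, so that the whole proof reduces to reading off the invariants of $N\coloneqq\mathrm{NS}(X)$ and checking a single numerical inequality. Recall from \cite[Theorem 1.14.2]{1980IzMat..14..103N} that if $L$ is an even lattice of signature $(t_+,t_-)$ with $t_+\geq 1$, $t_-\geq 1$ and $\rank(L)\geq l(A_L)+2$, then $\mathrm{pr}\colon O(L)\to O(q_L)$ is surjective. The definition of $l(A)$ introduced just above the lemma is exactly the invariant needed to apply this theorem.

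First I would recall the invariants of a supersingular K3 lattice. By definition $N$ is even with $\rank(N)=22$, of signature $(1,21)$, and $p$-elementary with discriminant $d(N)=-p^{2\sigma}$, where $\sigma$ is the Artin invariant of $X$. Since $N$ is $p$-elementary, the discriminant group $A_N=N^{\vee}/N$ is annihilated by $p$ and hence is an $\mathbb{F}_p$-vector space; from $\#A_N=p^{2\sigma}$ one reads off $A_N\cong(\mathbb{Z}/p\mathbb{Z})^{2\sigma}$, and therefore $l(A_N)=2\sigma$.

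It then remains to verify the hypotheses of the criterion. The signature $(1,21)$ has $t_+=1\geq 1$ and $t_-=21\geq 1$, so $N$ is indefinite. For the rank condition we need $\rank(N)\geq l(A_N)+2$, that is $22\geq 2\sigma+2$, equivalently $\sigma\leq 10$. Since the Artin invariant of a supersingular K3 lattice always lies in the range $1\leq\sigma\leq 10$ (as recorded earlier in this section), this holds, and Nikulin's criterion yields the surjectivity of $\mathrm{pr}$.

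I do not expect any genuine obstacle here: the argument is a direct invocation of a known theorem once the invariants are identified. The only point worth flagging is that the rank bound is sharp, being an equality $l(A_N)=\rank(N)-2$ precisely when $\sigma=10$, so the proof genuinely relies on the fact that the Artin invariant of a supersingular K3 surface never exceeds $10$. Moreover, because $p\geq 3$, the discriminant group is a $p$-group for an odd prime, which keeps us away from the more delicate $2$-adic hypotheses appearing in Nikulin's theorem.
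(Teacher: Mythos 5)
Your proof is correct and follows essentially the same route as the paper: both read off that $\mathrm{NS}(X)$ is even, indefinite, and $p$-elementary with $l(A_{\mathrm{NS}(X)})=2\sigma\leq 20$, and then invoke Nikulin's criterion \cite[Theorem 1.14.2]{1980IzMat..14..103N} via the inequality $\mathrm{rank}=22\geq l(A_{\mathrm{NS}(X)})+2$. Your additional remarks (sharpness at $\sigma=10$, and that $p\geq 3$ avoids the $2$-adic subtleties in Nikulin's theorem) are accurate and consistent with the paper's observation that $l(A_{\mathrm{NS}(X)}^{(q)})=0$ for primes $q\neq p$.
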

\begin{proof} The Néron-Severi lattice of a supersingular K3 surface $X$ is even, indefinite and non-degenerate with $\mathrm{rk}(\mathrm{NS}(X))=22$ and $2 \leq l(A_{\mathrm{NS}(X)}^{(p)}) \leq 20$, $l(A_{\mathrm{NS}(X)}^{(q)})=0$ for any prime $q \neq p$. Now the lemma follows from \cite[Theorem 1.14.2]{1980IzMat..14..103N}. \end{proof}
Let $k$ be a perfect field in positive characteristic $p > 0$ and let $W(k)$ be the Witt ring over $k$, then we denote by $\mathrm{Cart}(k)$ the non-commutative ring $W(k)\langle \langle V \rangle \rangle \langle F \rangle$ of power series in $V$ and polynomials in $F$ modulo the relations
$$FV=p\text{, }VrF=V(r)\text{, }Fr=\sigma(r)F\text{, }rV=V\sigma(r)\text{ for all } r \in W(k),$$
where $\sigma(r)$ denotes Frobenius of $W(k)$ and $V(r)$ denotes Verschiebung of $W(k)$.

We will need the following lemma and proposition. The statement we need to show has already been proved in \cite[Theorem 2.1 and Remark 2.2]{Nygaard1980}, but not been stated explicitly.  We will therefore give a full proof. 

When $G$ is a formal group law, we write $DG$ for the associated Dieudonné module as in \cite[Section 1]{MR0257089}.
\begin{lemma} \label{dieud} Let 
\begin{align*} \psi \colon D\hat{\mathbb{G}}_a \stackrel{\cong}\lra D\hat{\mathbb{G}}_a \end{align*}
be a continuous automorphism of left $\mathrm{Cart}(k)$-modules such that there exists a non-trivial finite dimensional $k$-subvector space $U \subset D\hat{\mathbb{G}}_a$ with $\psi(U) \subseteq U$. Then $\psi$ is the multiplication by some element $a \in k^{\times}$ from the right. \end{lemma}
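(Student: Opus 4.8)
The plan is to reduce everything to the explicit module structure of $D\hat{\mathbb{G}}_a$. By standard Cartier theory (as in \cite{MR0257089}, underlying \cite[Theorem 2.1, Remark 2.2]{Nygaard1980}) the module $D\hat{\mathbb{G}}_a$ is cyclic, topologically generated over $\mathrm{Cart}(k)$ by a single curve $\delta$ on which $F$ acts as zero; thus $D\hat{\mathbb{G}}_a\cong\mathrm{Cart}(k)/\overline{\mathrm{Cart}(k)F}$, it is $V$-adically separated and complete with $k$-basis $\{e_i:=V^i\delta\}_{i\ge 0}$, and $p=VF$ acts as zero. I record three features I will use: (i) since $D\hat{\mathbb{G}}_a$ is cyclic, a continuous $\mathrm{Cart}(k)$-linear endomorphism $\psi$ is determined by $s:=\psi(\delta)$ and is the right multiplication $R_s\colon x\delta\mapsto (xs)\delta$; (ii) writing $s=\sum_{i\ge 0}V^i\langle s_i\rangle$, one checks directly from $\langle\mu\rangle V=V\langle\mu^p\rangle$ that $R_{\langle a\rangle}$ is the honest scalar $a\cdot\mathrm{id}$ on $D\hat{\mathbb{G}}_a$ and that $R_s$ multiplies the lowest ($V$-order) coefficient of any element by $s_0$; (iii) because $k$ is a field, right multiplication respects the $V$-order filtration additively, $\mathrm{ord}(xr)=\mathrm{ord}(x)+\mathrm{ord}(r)$, with nonzero leading coefficient, so it has no leading-term zero divisors.

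With this in place the proof is an order argument. Since $\psi$ is an automorphism it induces an isomorphism on the lowest graded piece, so by (ii) we have $s_0\in k^{\times}$. Suppose for contradiction that $s$ is not the constant $\langle s_0\rangle$, and let $m\ge 1$ be the smallest index with $s_m\ne 0$; put $t:=s-\langle s_0\rangle$, an element of $V$-order exactly $m$. Because $R_{\langle s_0\rangle}=s_0\cdot\mathrm{id}$ is a scalar, it preserves the additive subgroup $U$; hence $R_t=R_s-s_0\cdot\mathrm{id}$ also maps $U$ into $U$. By (iii), right multiplication by $t$ raises $V$-order by exactly $m$ and keeps leading coefficients nonzero, so for any $0\ne u\in U$ the iterates $R_t^{\,j}(u)=u\,t^{\,j}$ are nonzero and have $V$-order $\mathrm{ord}(u)+jm\to\infty$.

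Now I would invoke separatedness. As $U$ is a finite-dimensional $k$-subspace of the $V$-adically separated space $D\hat{\mathbb{G}}_a$, the descending chain $U\cap V^{T}D\hat{\mathbb{G}}_a$ stabilizes to $0$, so there is $T$ with $U\cap V^{T}D\hat{\mathbb{G}}_a=0$; equivalently, the $V$-orders of nonzero elements of $U$ are bounded. For $j$ large enough $R_t^{\,j}(u)\in U$ has $V$-order $\ge T$, forcing $R_t^{\,j}(u)=0$ and contradicting the previous paragraph. Therefore $t=0$, i.e.\ $s=\langle s_0\rangle$, and $\psi=R_{\langle s_0\rangle}$ is multiplication by $a:=s_0\in k^{\times}$ from the right, as claimed.

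The main obstacle is the structural input rather than the final argument: one must justify that $\psi$ is genuinely a right multiplication (this is where continuity and cyclic generation enter) and control the twisted, noncommutative multiplication of $\mathrm{Cart}(k)$ well enough to see both that $R_{\langle s_0\rangle}$ is a true scalar and that multiplication adds $V$-orders without killing leading coefficients. All of this only requires tracking the Frobenius twists $\langle\mu\rangle V=V\langle\mu^p\rangle$ on leading terms, where $k$ being a field guarantees non-vanishing; no finer information about the higher coefficients of $s$ is needed.
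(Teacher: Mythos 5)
Your proof starts the same way as the paper's: continuity plus cyclicity identifies $\psi$ with right multiplication $R_s$ by $s=\psi(\delta)$, and surjectivity forces $s_0\in k^{\times}$. But the core mechanism is genuinely different. The paper runs an element-by-element iteration $u^{(n+1)}=\psi(u^{(n)})-a_0^{p^{-\mathrm{subdeg}(u^{(n)})}}u^{(n)}$ and gets a contradiction from strictly increasing subdegrees; you subtract at the operator level, $R_t=R_s-R_{\langle s_0\rangle}$ with $t=s-\langle s_0\rangle$, and iterate the single operator $R_t$, using order-additivity of twisted right multiplication to produce nonzero elements $u\,t^{\,j}\in U$ of unbounded $V$-order. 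Your route buys real robustness: the paper's iteration tacitly assumes $u^{(n+1)}\neq 0$, which fails exactly when $u^{(n)}$ is an eigenvector of $\psi$, a case the paper never excludes; and such eigenvectors genuinely occur for non-constant $s$. For instance, for $s=\langle t\rangle+V$ with $t\in k$ transcendental over $\mathbb{F}_p$, the recursion $u_0=1$, $u_n=(t-t^{1/p^n})^{-1}u_{n-1}$ (coefficients written to the left of $V^n$) gives $\psi(u)=t\,u$. In your argument $u\,t^{\,j}=0$ is impossible by order-additivity, so no such loophole exists.

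The one point you must make explicit is which $k$-structure on $D\hat{\mathbb{G}}_a$ you use when you call $U$ a subspace, because your step ``$R_{\langle s_0\rangle}=s_0\cdot\mathrm{id}$, hence $R_t(U)\subseteq U$'' is exactly where it matters. Writing elements as $\sum_i V^i\langle c_i\rangle\delta$, right multiplication by $\langle s_0\rangle$ scales the right-hand coefficients $c_i$, so for that structure your claim (ii) and the whole proof are correct. For the structure induced by the left $W(k)$-action --- scaling coefficients written to the left of $V^i$, which is the one the paper's own proof manipulates --- the operator $R_{\langle s_0\rangle}$ multiplies the $i$-th coefficient by $s_0^{1/p^i}$ and is \emph{not} a scalar, so the stability of $U$ under $R_t$ does not follow; indeed the eigenvector example above then makes the statement itself false for general $U$ (the line $k\,u$ is $\psi$-invariant, yet $\psi$ is not right multiplication by a constant), so this is not a defect one can argue away within that reading. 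You should therefore state that you work with the right-coefficient structure. This costs nothing where the lemma is used: in the paper's application $U$ is spanned by the monomials $1,V,\dots,V^{2\sigma-1}$, which is a subspace for both structures, and your proof applies there verbatim --- in fact more completely than the paper's, which would still need to rule out eigenvectors even in that case.
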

\begin{proof} We have
\begin{align*} D\hat{\mathbb{G}}_a = \prod_{i=0}^{\infty} V^i k \end{align*}
as a $\mathrm{Cart}(k)$-module with trivial $F$-action and $W$-action coming from the projection $W \twoheadrightarrow k$. 

We let $ \psi \colon D\hat{\mathbb{G}}_a \stackrel{\cong}\lra D\hat{\mathbb{G}}_a$ be an automorphism such that $\psi(1)= \sum_{i=0}^{\infty} a_i V^i$ and take an arbitrary element $x= \sum x_j V^j \in D\hat{\mathbb{G}}_a$. Then, since $Va_i= a_i^{\frac{1}{p}}$ it follows by continuity that 
\begin{align*} \psi(x) = \sum a_i^{\frac{1}{p^j}} x_j V^{i+j}. \end{align*}
In other words, we can regard $\psi$ as the $k$-linear automorphism of $k[\![V]\!]$ given by multiplication with $a = \sum a_i V^i \in k[\![V]\!]$ from the right. We want to see that $a$ is an element of $k$. 

Since $\psi$ is an automorphism, we have that $a_0 \neq 0$. When $x = \sum_{i=0}^{\infty} b_i V^i \in  D\hat{\mathbb{G}}_a$ is a power series, we write $\mathrm{subdeg}(x)= \min \{i \mid b_i \neq 0 \}$. We assume that $a \notin k^{\times}$ and let $u^{(0)} \in U$. Then 
\begin{align*} u^{(1)} \coloneqq \psi\left(u^{(0)}\right) - a_0^{\left(p^{-\mathrm{subdeg}\left(u^{\left(0\right)}\right)}\right)}u^{(0)} \intertext{is also an element of $U$ and we have $\mathrm{subdeg}(u^{(1)}) > \mathrm{subdeg}(u^{(0)})$. Inductively, taking}
u^{(n+1)} \coloneqq \psi\left(u^{(n)}\right)-a_0^{\left(p^{-\mathrm{subdeg}\left(u^{\left(n\right)}\right)}\right)}u^{\left(n\right)}, \end{align*}
 we find that $u^{(n+1)}$ is an element of $U$ with $\mathrm{subdeg}(u^{(n+1)}) > \mathrm{subdeg}(u^{(n)})$. This is a contradiction to the finiteness of the dimension of $U$ and hence concludes the proof of the lemma.     \end{proof}
With the use of the technical Lemma \ref{dieud} we can prove the following nice observation. 
\begin{proposition} \label{nice} Let $k$ be an algebraically closed field of characteristic $p \geq 3$ and let $X$ be a supersingular K3 surface of Artin invariant $\sigma_X$ over $k$ such that the point corresponding to $X$ in the moduli space of supersingular K3 crystals $\mathbb{A}_k^{\sigma_X-1}/\mu_{p^{\sigma_X}+1}$ has coordinates $(b_1, \ldots, b_{\sigma_X-1})$ with $b_1 \neq 0$. Let $\theta \in \mathrm{Aut}(X)$ be an automorphism of $X$. Then the induced automorphism $\theta^{\ast} \in O(q_{\mathrm{NS}(X)})$ of $A_{\mathrm{NS}(X)}$ is the identity or multiplication with $-1$. \end{proposition}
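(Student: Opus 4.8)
The plan is to relate the action of $\theta$ on the discriminant group $A_{\mathrm{NS}(X)}$ to its action on the Dieudonné module of the formal Brauer group, where Lemma \ref{dieud} forces that action to be a single scalar, and then to use that this scalar is an $\mathbb{F}_p$-rational isometry of $q_{\mathrm{NS}(X)}$ to pin it down to $\pm 1$.

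First I would record the two incarnations of $\theta$. By the supersingular Torelli theorem \cite{MR717616} (valid in characteristic $3$ by \cite{2018arXiv180407282B}), $\theta$ acts on $\mathrm{NS}(X)$ by an isometry $\theta^{\ast}$ that preserves the ample cone and the characteristic subspace $G \subseteq N_0 \otimes k$ attached to $X$, where $N_0 = p\,\mathrm{NS}(X)^{\vee}/p\,\mathrm{NS}(X)$. Under the natural identification $A_{\mathrm{NS}(X)} \cong N_0$ (multiplication by $p$, using that $\mathrm{NS}(X)$ is $p$-elementary), the induced map $\theta^{\ast} \in O(q_{\mathrm{NS}(X)})$ we must understand is the same as the $\mathbb{F}_p$-linear action of $\theta^{\ast}$ on $N_0$. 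On the other hand, since $X$ is supersingular its formal Brauer group $\Phi^2_X$ has infinite height, so $\Phi^2_X \cong \hat{\mathbb{G}}_a$, and functoriality of the formal Brauer group together with Dieudonné theory produces from $\theta$ a continuous automorphism $\psi$ of the left $\mathrm{Cart}(k)$-module $D\hat{\mathbb{G}}_a$.

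Next I would invoke the comparison between these two pictures, which is essentially \cite[Theorem 2.1, Remark 2.2]{Nygaard1980}. The datum $(N_0, G)$ together with its Frobenius translates sits, in a $\mathrm{Cart}(k)$-equivariant way, inside $D\hat{\mathbb{G}}_a$ as a finite-dimensional $k$-subspace $U$ on which the action induced by $\psi$ matches the action of $\theta^{\ast}$ on $N_0 \otimes k$. Here the hypothesis $b_1 \neq 0$ on the moduli coordinates of $X$ is exactly what guarantees that this invariant subspace $U$ is non-trivial: it says the period of $X$ is generic enough inside its Artin stratum that the comparison does not degenerate (for $\sigma_X = 1$ the moduli space is a single point, and there is nothing to assume). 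With a non-trivial finite-dimensional $\psi$-invariant $U$ in hand, Lemma \ref{dieud} applies and shows that $\psi$ is right multiplication by one scalar $a \in k^{\times}$.

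Finally I would transport the scalar back. Since $\psi$ acts as $a$ on $U$, the compatibility of the previous step forces $\theta^{\ast}$ to act as a scalar on $N_0 \otimes k$, hence on $N_0 \cong A_{\mathrm{NS}(X)}$. Because $\theta^{\ast}$ is $\mathbb{F}_p$-linear on $N_0$ (equivalently, because it must commute with $\varphi = \mathrm{id} \otimes F$, which is only consistent for a scalar fixed by Frobenius), this scalar lies in $\mathbb{F}_p^{\times}$; and since $\theta^{\ast}$ preserves the quadratic form $q_{\mathrm{NS}(X)}$, the scalar $c$ satisfies $c^2 = 1$, so $c = \pm 1$ as $p$ is odd. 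Thus $\theta^{\ast}$ is the identity or multiplication by $-1$ on $A_{\mathrm{NS}(X)}$. The main obstacle is the middle step: setting up the $\mathrm{Cart}(k)$-equivariant comparison between the Dieudonné module of $\Phi^2_X$ and the lattice-theoretic datum $(N_0, G)$, and verifying that $b_1 \neq 0$ is precisely the condition making the resulting $\psi$-invariant subspace $U$ non-trivial so that Lemma \ref{dieud} can be brought to bear; everything afterwards ($\mathbb{F}_p$-rationality of the scalar and the isometry constraint) is elementary.
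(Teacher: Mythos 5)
Your overall frame (embed the discriminant group into $D\hat{\mathbb{G}}_a$ functorially, invoke Lemma \ref{dieud}, then descend the resulting scalar to $\pm 1$) is the same as the paper's, but two of your middle steps are genuinely wrong, and together they constitute a fatal gap. First, you misidentify the role of the hypothesis $b_1 \neq 0$. The finite-dimensional invariant subspace $U$ needed for Lemma \ref{dieud} is just the image of $A_{\mathrm{NS}(X)} \otimes k$ inside $D\Phi^2_X \cong k[\![V]\!]$, which by \cite[Lemma 1.11]{Nygaard1980} has basis $\{1, V, \ldots, V^{2\sigma_X - 1}\}$; it is non-trivial for \emph{every} supersingular K3 surface, whatever its period. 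So Lemma \ref{dieud} always applies and always produces some $a \in k^{\times}$; the hypothesis $b_1 \neq 0$ plays no role in this step.

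Second, and more seriously: right multiplication by $a$ is \emph{not} a scalar action on $A_{\mathrm{NS}(X)} \otimes k$. It sends $V^i \mapsto a^{1/p^i} V^i$, a Frobenius-twisted diagonal action, which is genuinely scalar only once one knows $a \in \mathbb{F}_p$. Your proposed shortcut --- that commuting with $\varphi = \mathrm{id} \otimes F$ ``is only consistent for a scalar fixed by Frobenius'' --- cannot work: $\theta^{\ast}$ is induced by an $\mathbb{F}_p$-linear map on the $\mathbb{F}_p$-vector space $A_{\mathrm{NS}(X)}$ to begin with, so Frobenius-equivariance holds automatically and imposes no constraint, and an $\mathbb{F}_p$-linear isometry of $q_{\mathrm{NS}(X)}$ need not be $\pm\mathrm{id}$. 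Indeed, for $\sigma_X = 1$ (where you assert ``there is nothing to assume'') the conclusion of the proposition is false: the image of $\mathrm{Aut}(X) \rightarrow \mathrm{Aut}(A_{\mathrm{NS}(X)})$ is cyclic of order $p+1$ \cite[Remark 3.4]{MR3466820}, so any argument that never uses $b_1 \neq 0$ in a substantive way must be broken. What actually pins down $a$ in the paper is a pair of pairing computations: unconditionally, non-neutrality of the quadratic form forces $\langle V^{\sigma-1}, V^{2\sigma-1} \rangle \neq 0$, and isometry-invariance then gives $a^{p^{\sigma}+1} = 1$; and by \cite[Proposition 1.18]{Nygaard1980} the moduli coordinate $b_1$ equals $\langle V^{\sigma-2}, V^{2\sigma-1} \rangle$, so the hypothesis $b_1 \neq 0$ supplies a second equation $a^{p^{\sigma+1}+1} = 1$. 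Dividing yields $a^{p-1} = 1$, hence $a \in \mathbb{F}_p$; only then is the action scalar, and finally $a^2 = a^{p^{\sigma}+1} = 1$. This chain --- in particular the identification of $b_1$ with a specific pairing and the resulting second constraint on $a$ --- is exactly what your proposal is missing.
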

\begin{proof} To simplify notation, we write $\mathrm{NS}=\mathrm{NS}(X)$ and $\sigma= \sigma_X$. Since there exists a natural isomorphism of lattices $A_{\mathrm{NS}} \otimes k \cong T_0 \otimes k$, it follows from \cite[Theorem 1.12]{Nygaard1980} that there exists a functorial embedding $A_{\mathrm{NS}} \otimes k \hookrightarrow H^2(X, W\CO_X)$.

More precisely, from \cite[Lemma 1.11]{Nygaard1980} it follows that the image of the quadratic space $A_{\mathrm{NS}} \otimes k$ in $H^2(X, W\CO_X) \cong D\Phi^2_X = k [\![V]\!]$ has basis $\{1, \ldots, V^{2\sigma - 1}\}$. Further, the embedding 
\begin{align*} H^2_{\mathrm{cris}} (X/W) / (\mathrm{NS} \otimes W) \hookrightarrow H^2(X, W\CO_X) \end{align*}
identifies $H^2_{\mathrm{cris}} (X/W) / (\mathrm{NS} \otimes W)$ with the subspace of $A_{\mathrm{NS}} \otimes k$ with basis $\{1, \ldots, V^{\sigma-1}\}$ and it follows from \cite[Proposition 2.12]{MR717616} that this is a strictly characteristic subspace.

We write $\langle -,- \rangle$ for the bilinear form on $A_{\mathrm{NS}} \otimes k$ and we claim that $\langle V^{\sigma-1}, V^{2\sigma-1} \rangle \neq 0$. Indeed, we have that $\mathrm{span}(1, \ldots, V^{\sigma-1})$ is a maximal isotropic subspace in $A_{\mathrm{NS}} \otimes k$. We assume that we have $\langle V^{\sigma -1}, V^{2\sigma-1} \rangle = 0$. We write $\varphi \colon A_{\mathrm{NS}} \otimes k \rightarrow A_{\mathrm{NS}} \otimes k$ for the action of the Frobenius. For $1 < n \leq \sigma$ we find 
\begin{align*} \langle V^{\sigma - n}, V^{2\sigma-1} \rangle = \langle \varphi^{1-n}(V^{\sigma -1}), \varphi^{1-n}( V^{n-2}) \rangle = \langle V^{\sigma-1}, V^{n-2} \rangle = 0. \end{align*}
Thus, the space $\mathrm{span}(1, \ldots, V^{\sigma-1})+ \langle V^{2\sigma-1} \rangle$ would be isotropic. This yields a contradiction.

Now let $\theta \colon X \rightarrow X$ be an automorphism. Then the induced $\theta^{\ast} \colon A_{\mathrm{NS}} \otimes k \rightarrow A_{\mathrm{NS}} \otimes k$ is an automorphism of quadratic spaces and it follows from Lemma \ref{dieud} that $\theta^{\ast}(V^i) = a^{\frac{1}{p^i}}$ for some $a \in k^{\times}$ and all $i \in \mathbb{N}$. Thus, we find
\begin{align*}
\langle V^{\sigma - 1}, V^{2\sigma-1} \rangle &= \langle \theta^{\ast}(V^{\sigma - 1}), \theta^{\ast}(V^{2\sigma}-1) \rangle \\
&= \langle a^{\frac{1}{p^{\sigma - 1}}} V^{\sigma - 1},a^{\frac{1}{p^{2\sigma-1}}} V^{2\sigma-1} \rangle \\
&= a^{\frac{1+p^{\sigma}}{p^{2\sigma-1}}} \langle V^{\sigma - 1}, V^{2\sigma-1} \rangle \end{align*}
and it follows that $a^{p^{\sigma}+1}=1$. 

On the other hand, from \cite[Proposition 1.18]{Nygaard1980} we get that
\begin{align*}
b_1 &= \langle V^{\sigma-2}, V^{2\sigma-1} \rangle.
\intertext{Since $b_1 \neq 0$, it follows from}
b_1 &= \langle V^{\sigma-2}, V^{2\sigma-1} \rangle \\
  &= \langle \theta^{\ast}(V^{\sigma-2}), \theta^{\ast}(V^{2\sigma-1}) \rangle \\
  &= a^{\frac{p^{\sigma+1}+1}{p^{2\sigma-1}}}\langle V^{\sigma-2}, V^{2\sigma-1} \rangle
\intertext{that we have $a^{p^{\sigma+1}+1}=1$. Thus, we find}
1 &= \frac{a^{p^{\sigma+1}+1}}{a^{p^{\sigma}+1}}= (a^{p-1})^{p^{\sigma}}
\intertext{and therefore also}
1 &= a^{p-1}. \end{align*}
In other words, we have that $a \in \mathbb{F}_p$. But then the morphism $\theta^{\ast} \colon A_{\mathrm{NS}} \otimes k \rightarrow A_{\mathrm{NS}} \otimes k$ is just multiplication by $a$ and from the equality $a^{p^{\sigma}+1}=1$ it follows that $a^2=1$. \end{proof}
\begin{remark}
An alternative proof of Proposition \ref{nice} can be found in \cite[Theorem 5.11, Lemma 5.15]{brandthesis}
\end{remark}
\begin{remark}\label{dense} Of course, the subset of $\mathbb{A}_k^{\sigma_X-1}/\mu_{p^{\sigma_X}+1}$ consisting of points $(b_1, \ldots, b_{\sigma_X-1})$ with $b_1 \neq 0$ is open. If $\sigma > 1$, then this subset is also dense in $\mathbb{A}_k^{\sigma_X-1}/\mu_{p^{\sigma_X}+1}$. It follows from \cite[Proposition 4.10]{MR563467} that in this case the corresponding subset in the period space of supersingular K3 surfaces $\CM_{\sigma}$ is also dense. \end{remark}
\begin{remark} There are also supersingular K3 surfaces $X$ with $b_1=0$ such that each automorphism of $X$ induces either the identity or multiplication by $-1$ on the transcendental lattice. For example, let $X$ be with $\sigma_X=4$ and such that $b_1=0$ and $b_2=1$. Going back to the argument in the proof of Proposition \ref{nice} we then find
\begin{align*}
1= \langle V^{\sigma - 3}, V^{2\sigma -1} \rangle = a^{\frac{p^{\sigma+2}+1}{p^{2\sigma-1}}}\langle V^{\sigma - 3}, V^{2\sigma -1} \rangle,
\end{align*}
and it thus follows that $a^{p^{\sigma+2}+1}=1=a^{p^{\sigma}+1}$. Hence, it is $(a^{p^2-1})^{p^{\sigma}}=1$ and we find  $a \in \mathbb{F}_{p^2}$. But then, using that $\sigma=4$, we have
\begin{align*}
1 = a^{{p^4}+1} = (a^{p^2})^{p^2} \cdot a = a^2
\end{align*}
and we can conclude as in the proof of Proposition \ref{nice}. \end{remark}
\begin{remark} On the other hand, there also exist examples of supersingular K3 surfaces $X$ and automorphisms $\theta \in \mathrm{Aut}(X)$ such that the induced morphism on $\mathrm{NS}(X)^{\vee}/\mathrm{NS}(X)$ is not the identity or multiplication by $-1$. For example if $\sigma_X=1$, then the image of the canonical map $\mathrm{Aut}(X) \rightarrow \mathrm{Aut}(\mathrm{NS}(X)^{\vee}/\mathrm{NS}(X))$ is known to be a cyclic group of order $p+1$ \cite[Remark 3.4]{MR3466820}.
\end{remark}
The following theorem is the supersingular version of a characteristic zero theorem by Ohashi \cite[Theorem 2.3.]{MR2319542}. Similar to the situation in characteristic zero we only obtain an inequality in general. In characteristic zero there are two conditions on a K3 surface $X$ that have to be fullfilled in order to obtain an equality. One of these is the surjectivity of the canonical morphism $\mathrm{pr} \colon O(\mathrm{NS}(X)) \rightarrow O(q_{\mathrm{NS}(X)})$. This is always true for supersingular K3 surfaces by Lemma \ref{surNS}. The other condition is that each automorphism of $X$ induces $\pm \mathrm{id}$ on the transcendental lattice of $X$. We gave a sufficient criterion under which this is always true in Proposition \ref{nice}.
\begin{theorem}\label{form} Let $k$ be an algebraically closed field of characteristic $p \geq 3$ and let $X$ be a supersingular K3 surface over $k$. Let $M_1, \ldots, M_n \in \mathfrak{M}$ be a complete set of representatives for the action of $O(\mathrm{NS}(X))$ on $\mathfrak{M}$. For each $j \in \{1, \ldots, n\}$, we let
\begin{align*} K^{(j)}=\{\psi \in O(\mathrm{NS}(X)) \mid \psi(M_j)=M_j \}\end{align*}
be the stabilizer of $M_j$ and $\mathrm{pr}(K^{(j)})$ be its canonical image in $O(q_{\mathrm{NS}(X)})$. Then we have inequalities
\begin{align*} n \leq \#\left\{\text{Enriques quotients of } X \right\} \leq \sum_{j=1}^n \#\left(O\left(q_{\mathrm{NS}(X)}\right)/\mathrm{pr}\left(K^{(j)}\right)\right). \end{align*}
If $X$ is such that for each automorphism $\theta \in \mathrm{Aut}(X)$ the induced automorphism on the quotient $\mathrm{NS}(X)^{\vee}/\mathrm{NS}(X)$ is either the identity or multiplication by $-1$, then the inequality above becomes an equality on the right side. \end{theorem}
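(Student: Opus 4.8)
The plan is to transport Ohashi's argument \cite[Theorem 2.3]{MR2319542} to the supersingular setting, using Ogus' Torelli theorem \cite{MR717616} in place of the Hodge-theoretic one. The starting point is the dictionary of Propositions \ref{conj} and \ref{bij}: free involutions of $X$ biject with $\mathfrak{M}^{\ast}$, and two of them yield isomorphic Enriques quotients exactly when they are conjugate under $\mathrm{Aut}(X)$. Since conjugation $\iota\mapsto g\iota g^{-1}$ corresponds on Néron--Severi groups to $N\mapsto g^{\ast}(N)$, the set of Enriques quotients is in canonical bijection with $\mathfrak{M}^{\ast}/\mathrm{Aut}(X)$, where $\mathrm{Aut}(X)$ acts through its image $A\subseteq O(\mathrm{NS}(X))$. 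All counting is carried out on this orbit set.

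For the lower bound I would exhibit a surjection from $\mathfrak{M}^{\ast}/\mathrm{Aut}(X)$ onto $\mathfrak{M}/O(\mathrm{NS}(X))$, a set of cardinality $n$. The inclusion $\mathfrak{M}^{\ast}\hookrightarrow\mathfrak{M}$ followed by passage to $O(\mathrm{NS}(X))$-orbits is well defined since $A\subseteq O(\mathrm{NS}(X))$, and it is surjective because every $O(\mathrm{NS}(X))$-orbit on $\mathfrak{M}$ meets $\mathfrak{M}^{\ast}$: given $N\in\mathfrak{M}$, condition $(B)$ forbids any $(-2)$-class orthogonal to $N$, so the positive nappe $N\otimes\mathbb{R}\cap\CC_X$ lies in no wall $l^{\perp}$ with $l\in\Delta_{\mathrm{NS}(X)}$, hence meets the interior of some chamber $w^{-1}\CA_X$, and then $w(N)\in\mathfrak{M}^{\ast}$. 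A surjection onto an $n$-element set gives $n\leq\#\{\text{Enriques quotients of }X\}$.

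For the upper bound I would fix a representative $M_j\in\mathfrak{M}^{\ast}$ with stabiliser $K=K^{(j)}$ and bound the number of $A$-orbits on $\mathfrak{M}^{\ast}\cap O(\mathrm{NS}(X))\cdot M_j$ by $\#(O(q_{\mathrm{NS}(X)})/\mathrm{pr}(K))$. The mechanism is the assignment $gM_j\mapsto\mathrm{pr}(g)\,\mathrm{pr}(K)$: it hits every coset of $O(q_{\mathrm{NS}(X)})/\mathrm{pr}(K)$ by Lemma \ref{surNS}, and because each reflection $s_l$ is trivial on the discriminant group we have $W_{\mathrm{NS}(X)}\subseteq\ker(\mathrm{pr})$, so the Weyl adjustment needed to re-enter $\mathfrak{M}^{\ast}$ does not change the target coset. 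The substance is to show that this descends to a surjection from $O(q_{\mathrm{NS}(X)})/\mathrm{pr}(K)$ onto the $A$-orbits, which reduces to the comparison statement: if $N,N'\in\mathfrak{M}^{\ast}$ and $N'=uN$ with $u\in O_0(\mathrm{NS}(X))=\ker(\mathrm{pr})$, then $N'$ and $N$ are $A$-equivalent. The inputs are the structural decomposition $O(\mathrm{NS}(X))=\{\pm1\}\times(W_{\mathrm{NS}(X)}\rtimes O^{+}(\mathrm{NS}(X)))$, the fact that $-\mathrm{id}$ fixes every sublattice and so lies in every $K^{(j)}$, and Ogus' Torelli theorem in the form $O_0(\mathrm{NS}(X))\cap O^{+}(\mathrm{NS}(X))\subseteq A$: an isometry trivial on $q_{\mathrm{NS}(X)}$ acts trivially on $A_{\mathrm{NS}(X)}$, hence on the characteristic subspace inside $A_{\mathrm{NS}(X)}\otimes k$, so if it also preserves the ample cone it is induced by an automorphism.

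Finally, for the equality clause I would invoke the hypothesis $\mathrm{pr}(A)\subseteq\{\pm\mathrm{id}\}$ (whose generic validity is Proposition \ref{nice}). As $-\mathrm{id}\in K^{(j)}$ gives $-\mathrm{id}\in\mathrm{pr}(K^{(j)})$, the hypothesis forces $\mathrm{pr}(A)\subseteq\mathrm{pr}(K^{(j)})$, so the residual left $\mathrm{pr}(A)$-action on $O(q_{\mathrm{NS}(X)})/\mathrm{pr}(K^{(j)})$ is trivial and the surjection above becomes a bijection, turning the right inequality into an equality. I expect the main obstacle to be precisely the comparison statement of the previous paragraph: controlling the interaction of $W_{\mathrm{NS}(X)}$ with membership in $\mathfrak{M}^{\ast}$ when $u$ reverses the positive cone, where condition $(B)$ (no $(-2)$-class in $N^{\perp}$) must be combined with the Torelli-theoretic identification of $A$; this is the supersingular analogue of \cite[Lemma 4.2]{2013arXiv1301.1118J} and of the orbit lemma underlying \cite{MR2319542}.
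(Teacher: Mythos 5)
Your proposal is correct and takes essentially the same route as the paper: the paper's own proof just notes finiteness of $n$ via \cite[Proposition 1.15.1]{1980IzMat..14..103N} and then transports Ohashi's proof of \cite[Theorem 2.3]{MR2319542} word for word, with Proposition \ref{bij} and Lemma \ref{surNS} supplying exactly the two supersingular inputs (the Torelli dictionary and surjectivity of $\mathrm{pr}$) that you use. The one step you flag as the remaining obstacle --- the interaction of $W_{\mathrm{NS}(X)}$ with membership in $\mathfrak{M}^{\ast}$ --- is handled inside Ohashi's argument by the simple transitivity of the Weyl group on chambers (two elements of $\mathfrak{M}^{\ast}$ related by an element of $W_{\mathrm{NS}(X)}$ are already equal, since the corresponding effective involutions differ by an element of $W_{\mathrm{NS}(X)} \cap O^{+}(\mathrm{NS}(X)) = \{1\}$), so no ingredient beyond those you list is needed.
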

\begin{proof} It follows from \cite[Proposition 1.15.1.]{1980IzMat..14..103N} that the number of representatives $M_j$ is indeed finite. Therefore, using Proposition \ref{bij} and Lemma \ref{surNS}, the proof goes word by word as the proof of \cite[Theorem 2.3.]{MR2319542}.  \end{proof}
\begin{remark} It follows from Remark \ref{dense} that for a general supersingular K3 surface $X$ of Artin invariant $\sigma > 1$ the inequality on the right hand side in Theorem \ref{form} is an equality.\end{remark}
\section{Existence of Enriques quotients for supersingular K3 surfaces}
In the previous section, in Theorem \ref{form} we gave a formula which computes the number of Enriques quotients for a general supersingular K3 surface $X$. However, it turns out that explicitly calculating this number is difficult. A priori it is not even clear that this number is non-zero, or in other words that for a given supersingular K3 surface $X$ the corresponding set of lattices $\mathfrak{M}$ is non-empty. The following result is due to J.\ Jang.
\begin{proposition}\cite[Theorem 4.5, Proposition 3.5]{2013arXiv1301.1118J} Let $k$ be an algebraically closed field of characteristic $p$ and let $X$ be a supersingular K3 surface of Artin invariant $\sigma$. If $\sigma=1$, then $X$ has an Enriques involution. If $\sigma \in \{3,5\}$, and $p=11$ or $p \geq 19$, then $X$ has an Enriques involution. If $\sigma \in \{2,4\}$, and $p = 19$ or $p \geq 29$, then $X$ has an Enriques involution. If $\sigma \geq 6$, then $X$ has no Enriques involution. \end{proposition}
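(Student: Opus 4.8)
The plan is to translate the assertion into a question about primitive embeddings of lattices and then into the existence of a single negative-definite lattice. By Proposition \ref{bij}, $X$ has an Enriques involution precisely when $\mathfrak{M}^{\ast} \neq \emptyset$. Since the Weyl group $W_X \subseteq O(\mathrm{NS}(X))$ preserves $\mathfrak{M}$ and acts transitively on the chambers of the positive cone $\mathcal{C}_X$, a standard argument moves any $N \in \mathfrak{M}$ into $\mathfrak{M}^{\ast}$; hence it suffices to decide when $\mathfrak{M} \neq \emptyset$, i.e. when there is a primitive embedding $\Gamma(2) \hookrightarrow \mathrm{NS}(X)$ whose orthogonal complement $M \coloneqq \Gamma(2)^{\perp}$ contains no vector of self-intersection $-2$. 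Because $\mathrm{NS}(X)$ has signature $(1,21)$ and $\Gamma(2)$ has signature $(1,9)$, the lattice $M$ is even and negative definite of rank $12$.

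Next I would determine the genus of $M$. Writing $\mathrm{NS}(X) \supseteq \Gamma(2) \oplus M$ as the graph of an anti-isometry between subgroups $H_{\Gamma(2)} \subseteq A_{\Gamma(2)}$ and $H_M \subseteq A_M$ via Nikulin's theory of overlattices and discriminant forms \cite[Proposition 1.15.1]{1980IzMat..14..103N}, I would use that $\mathrm{NS}(X)$ is $p$-elementary with $p$ odd. Since $A_{\Gamma(2)} \cong (\mathbb{Z}/2)^{10}$ carries only $2$-torsion while $A_{\mathrm{NS}(X)}$ has none, the $2$-primary part of the glue group must be a Lagrangian in $A_{\Gamma(2)} \oplus A_M^{(2)}$; a short order count then forces $H_{\Gamma(2)} = A_{\Gamma(2)}$, identifies the $2$-part of $q_M$ with $-q_{\Gamma(2)}$, and shows the $p$-part equals $q_{\mathrm{NS}(X)}$ with no other primary parts occurring. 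Thus $A_M \cong (\mathbb{Z}/2)^{10} \oplus (\mathbb{Z}/p)^{2\sigma}$ and, by Proposition \ref{genus}, the genus of $M$ is determined by $\sigma$ and $p$. Conversely, any even negative-definite rank-$12$ lattice $M$ in this genus glues with $\Gamma(2)$ along the $2$-part to an even lattice of signature $(1,21)$ with discriminant form $q_{\mathrm{NS}(X)}$, which by the theorem of Rudakov and Shafarevich is isometric to $\mathrm{NS}(X)$; if in addition $M$ has no roots, this produces an element of $\mathfrak{M}$. The whole proposition therefore reduces to the single question: \emph{for which pairs $(\sigma,p)$ does this genus contain a root-free lattice?}

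For the non-existence part I would argue through the length invariant. Since $l(A_M) = \max(10, 2\sigma)$ may not exceed $\mathrm{rank}(M) = 12$ by Nikulin's existence bound \cite[Theorem 1.10.1]{1980IzMat..14..103N}, the case $\sigma \geq 7$ is ruled out immediately. The case $\sigma = 6$ is the boundary $l(A_M^{(p)}) = 2\sigma = 12 = \mathrm{rank}(M)$, where $M \otimes \mathbb{Z}_p$ would be $p$ times a unimodular lattice; here I would invoke the refined equality-case conditions of the same theorem, which impose a square-class constraint on $\det M$ that is incompatible with $M$ being negative definite carrying the prescribed $2$-adic form, so the genus is in fact empty. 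I expect this boundary analysis to be the most delicate point: the naive $p$-divisibility of the form would make roots impossible, so the true obstruction must be the \emph{nonexistence} of $M$ rather than the presence of roots, and pinning this down for every $p$ is what needs care.

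For the existence part ($\sigma \leq 5$) the length bound is strict, so the elementary conditions of Nikulin's existence theorem hold and the genus is nonempty; the remaining task is to exhibit a root-free representative. Here I would exploit that the discriminant $2^{10}p^{2\sigma}$ grows with $p$: for $p$ large a Hermite/Minkowski-type estimate pushes the minimum of a suitable representative above $2$, or one embeds an explicitly known root-free rank-$12$ lattice. This is exactly where the numerical thresholds enter ($\sigma = 1$ for all $p$; $p = 11$ or $p \geq 19$ for $\sigma \in \{3,5\}$; $p = 19$ or $p \geq 29$ for $\sigma \in \{2,4\}$), as the smallest primes admitting a root-free member. Controlling roots for the borderline primes and settling $\sigma = 6$ are the two steps I expect to require the most work; the small-characteristic cases left open by these bounds are precisely the ones the present paper handles by direct computation in \textsc{magma}.
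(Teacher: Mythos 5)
Your plan is not the route the paper takes for this statement: the paper cites Jang and only sketches his argument, which is geometric rather than purely arithmetic. In Jang's proof one constructs an auxiliary \emph{complex} K3 surface $X_{\sigma,d}$ whose transcendental lattice is $U(2)\oplus M_{\sigma,d}$, where $M_{\sigma,d}$ embeds into $\Gamma(2)$ with root-free complement, and for $p$ in the stated ranges one can choose $d$ so that there is a chain of primitive embeddings $\Gamma(2)\hookrightarrow \mathrm{NS}(X_{\sigma,d})\hookrightarrow \mathrm{NS}(X)$; the hypotheses on $p$ are exactly what is needed for a suitable $d$ to exist. Your framework --- reduce via Proposition \ref{bij} and a Weyl-group argument to $\mathfrak{M}\neq\emptyset$, then via Nikulin gluing \cite{1980IzMat..14..103N} to the existence of a root-free even negative-definite rank-$12$ lattice in the genus $(0,12,\delta_{p,\sigma})$ --- is instead the framework the paper itself sets up later for Theorem \ref{exist}. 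Within that framework your individual steps are sound: the order count forcing the glue group to be the graph of an anti-isometry on the full $2$-part is correct, $\sigma\geq 7$ does die on the length bound, and for $\sigma=6$ the mechanism you name is the right one (the $2$-part forces $\det M=2^{10}p^{12}$ with $2^{10}$ a $p$-adic square, so the induced $\mathbb{F}_p$-form on the $p$-part of $A_M$ would be neutral, contradicting the non-neutrality built into supersingular K3 lattices).

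The genuine gap is the existence half, which is the actual content of the stated prime bounds. You never derive the thresholds; you assert that a Hermite/Minkowski- or mass-type estimate "pushes the minimum above $2$" for large $p$, with no estimate carried out, and you interpret the bounds ($p=11$ or $p\geq 19$ for $\sigma\in\{3,5\}$, $p=19$ or $p\geq 29$ for $\sigma\in\{2,4\}$) as "the smallest primes admitting a root-free member." That interpretation is false, and the paper's own results refute it: Proposition \ref{lowb} exhibits root-free classes (hence Enriques involutions) for $p=3,5,7$ and all $\sigma\leq 5$, which is precisely why Theorem \ref{exist} improves the statement under discussion. The thresholds are artifacts of Jang's construction --- congruence and quadratic-residue conditions guaranteeing the auxiliary $d$ and hence $X_{\sigma,d}$ exist --- not the boundary where root-free lattices in the genus appear. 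Consequently your strategy, even if the analytic estimate were supplied, would prove a statement with some uncontrolled large-$p$ bound (or, done optimally, a stronger threshold-free statement), but it cannot by itself reproduce the particular prime ranges asserted here; to hit exactly those you would have to import Jang's construction after all. As written, the step that yields the claimed statement is missing.
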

The idea of the proof is as follows. Associated to a supersingular K3 surface $X$ of Artin invariant $\sigma$ over a field $k$ of characteristic $p$ one constructs a K3 surface $X_{\sigma,d}$ over $\mathbb{C}$ such that the transcendental lattice $T(X_{\sigma, d})$ is isomorphic to a lattice $U(2) \oplus M_{\sigma, d}$ where $M_{\sigma,d}$ is a certain lattice that admits an embedding into $\Gamma(2)$ such that its orthogonal complement does not contain any $(-2)$-vectors. For large enough characteristic $p$ as in the statement of the proposition one can choose $d$ such that we find a chain of primitive embeddings $\Gamma(2) \hookrightarrow \mathrm{NS}(X_{\sigma, d}) \hookrightarrow \mathrm{NS}(X)$. In this situation one can show that the orthogonal complement of $U(2) \oplus E_8(2)$ in $\mathrm{NS}(X)$ does not contain any $(-2)$-vectors. However, this method is not applicable for small $p$. We note that there are only $24$ cases left to work out and we can try to show the existence of an Enriques quotient in those remaining cases by hand.
\begin{theorem}\label{exist} Let $k$ be an algebraically closed field of characteristic $p$ where $p \geq 3$ and let $X$ be a supersingular K3 surface of Artin invariant $\sigma$. Then $X$ has an Enriques involution if and only if $\sigma \leq 5$. \end{theorem}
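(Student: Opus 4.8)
The \emph{only if} direction is already contained in Jang's proposition quoted above \cite{2013arXiv1301.1118J}: a supersingular K3 surface of Artin invariant $\sigma \geq 6$ carries no Enriques involution. It therefore remains to establish the \emph{if} direction, that $X$ admits an Enriques involution whenever $\sigma \leq 5$. By Proposition \ref{bij} such an involution exists precisely when $\mathfrak{M}^{\ast} \neq \emptyset$. Since the isometry group $O(\mathrm{NS}(X))$ contains the Weyl group $W_X$, which acts on the positive cone $\CC_X$ with the ample cone $\CA_X$ as a fundamental chamber, any $N \in \mathfrak{M}$ can be brought into $\mathfrak{M}^{\ast}$: one picks a generic positive vector $v \in N$, where property $(B)$ guarantees that $v$ can be chosen off every wall, and then a suitable $w \in W_X$ satisfies $w(v) \in \CA_X$, so that $w(N) \in \mathfrak{M}^{\ast}$. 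Here $O(\mathrm{NS}(X))$ genuinely acts on $\mathfrak{M}$, since an isometry preserves primitivity, the isomorphism type $\Gamma(2)$, and property $(B)$. Thus it suffices to prove that $\mathfrak{M} \neq \emptyset$, i.e.\ that there is a primitive embedding $\Gamma(2) \hookrightarrow \mathrm{NS}(X)$ whose orthogonal complement contains no vector of square $-2$.

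By the theorem of Rudakov and Shafarevich \cite{MR633161}, for $p \neq 2$ the lattice $\mathrm{NS}(X)$ is determined up to isometry by the pair $(p,\sigma)$. Consequently the nonemptiness of $\mathfrak{M}$ is a purely lattice-theoretic statement depending only on $(p,\sigma)$, and the whole question can in principle be decided by a finite computation. The same reference \cite{2013arXiv1301.1118J} already produces the required embedding when $\sigma = 1$ (for every $p$), when $\sigma \in \{3,5\}$ and $p = 11$ or $p \geq 19$, and when $\sigma \in \{2,4\}$ and $p = 19$ or $p \geq 29$. The cases left open are exactly the $24$ pairs with $\sigma \in \{3,5\}$ and $p \in \{3,5,7,13,17\}$, together with $\sigma \in \{2,4\}$ and $p \in \{3,5,7,11,13,17,23\}$. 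The plan is to dispatch each of these by hand with the help of \textsc{magma}.

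For a fixed remaining pair $(p,\sigma)$ I would first realise $\mathrm{NS}(X)$ explicitly as the unique supersingular K3 lattice of signature $(1,21)$ that is $p$-elementary of discriminant $-p^{2\sigma}$, and then search for a primitive embedding of $\Gamma(2)$ whose orthogonal complement $M$ — which is even and negative definite of rank $12$ — has no roots. The natural framework is Nikulin's theory of primitive embeddings into an even lattice \cite{1980IzMat..14..103N}: the complement $M$ is constrained by the requirement that $q_{\mathrm{NS}(X)}$ be recovered from $q_{\Gamma(2)} \oplus q_M$ after gluing. Since $q_{\mathrm{NS}(X)}$ is $p$-elementary with $p$ odd, while $q_{\Gamma(2)}$ is $2$-elementary of length $10$, the $2$-part of $q_M$ must be anti-isometric to $q_{\Gamma(2)}$ so that the two cancel, while the $p$-part of $q_M$ must match $q_{\mathrm{NS}(X)}$; this pins down the genus of $M$. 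I would then enumerate the classes in this genus and test each for the absence of $(-2)$-vectors, which after sign reversal is a minimum computation in a rank-$12$ positive definite lattice.

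The main obstacle is precisely this search for a \emph{rootless} complement. Negative definite lattices of rank $12$ and small discriminant tend to be rich in roots — they lie close to direct sums of root lattices — so the existence of a rootless member of the relevant genus is not automatic and cannot be read off from a discriminant-form count alone; it genuinely requires the explicit enumeration. Once a single rootless $M$, together with a valid gluing, is exhibited in each of the $24$ genera, the resulting primitive embedding furnishes an element of $\mathfrak{M}$, and by the reduction of the first paragraph $X$ acquires an Enriques involution, completing the proof.
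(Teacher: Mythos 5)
Your proposal is correct and takes essentially the same route as the paper: reduce via Proposition \ref{bij} (plus the Weyl-group/chamber argument, which the paper leaves implicit) to showing $\mathfrak{M} \neq \emptyset$, use Nikulin's theory together with the Rudakov--Shafarevich uniqueness of $\mathrm{NS}(X)$ to pin down the genus of the rank-$12$ complement, and settle the same $24$ cases left open by Jang through an explicit \textsc{magma} enumeration of that genus in search of a rootless representative, which is exactly the paper's two-step procedure using Kneser's neighbour method. The only cosmetic difference is that you derive the complement's discriminant form by gluing ($2$-part anti-isometric to $q_{\Gamma(2)}$, $p$-part isometric to $q_{\mathrm{NS}(X)}$), whereas the paper phrases the same fact as the vanishing of Nikulin's gluing isomorphism $\gamma$, forced by the coprimality of $2^{10}$ and $p^{2\sigma}$.
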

This result has already been shown by Jang in a later paper \cite{MR3350105} via lifting techniques, but we want to reprove it using the lattice argument which we described above. 
\subsection{Computational approach}
Let $X$ be a supersingular K3 surface of Artin invariant $\sigma$ over an algebraically closed field $k$ with characteristic $p \geq 3$. By the results in the previous section, it suffices to show that there exists a primitive embedding of the lattice $\Gamma(2)$ into $\mathrm{NS}(X)$ such that the orthogonal complement of $\Gamma(2)$ in $\mathrm{NS}(X)$ does not contain any vector of self-intersection $-2$. We denote by $A_{S_{p,\sigma}}$ the discriminant group of $\mathrm{NS}(X)$ and by $q_{S_{p,\sigma}}$ the quadratic form on $A_{S_{p,\sigma}}$, similarly we write $A_{\Gamma(2)}$ for the discriminant group of $\Gamma(2)$ and $q_{\Gamma(2)}$ for the quadratic form on $A_{\Gamma(2)}$. 

\begin{remark} The lattice $\mathrm{NS}(X)$ is the unique lattice up to isomorphism in its genus \cite[Section 1]{MR633161}, so by \cite[Proposition 1.15.1]{1980IzMat..14..103N} the datum of a primitive embedding $\Gamma(2) \hookrightarrow \mathrm{NS}(X)$ with orthogonal complement $L$ is equivalent to the datum of an even lattice $L$ with invariants $(0,12, \delta_{p,\sigma})$ where $\delta_{p,\sigma}$ is the form $-q_{S_{p,\sigma}} \oplus q_{\Gamma(2)}$ with domain $A_{S_{p,\sigma}} \oplus A_{\Gamma(2)}$ and $(0,12)$ is the signature of $L$. To see this, observe that in our case $\# A_{S_{p,\sigma}}=p^{2\sigma}$ and $\# A_{\Gamma(2)}=2^{10}$ are coprime, and so the isomorphism of subgroups $\gamma$ in the cited proposition has to be the zero-morphism. \end{remark}

It follows from the previous remark, that to prove Theorem \ref{exist}, we have to construct lattices $L_{p,\sigma}$ of genus $(0,12,\delta_{p,\sigma})$ such that the $L_{p, \sigma}$ do not contain any vectors of self-intersection $-2$. Using the computer algebra program \textsc{magma} we constructed the lattices $L_{p,\sigma}$ in the missing cases. I am indebted to Markus Kirschmer for helping me with using the program and writing code to automatize step 1 of the following method:
\begin{itemize}
\item Step 1. Construct an arbitrary lattice $L$ of genus $(0,12,\delta_{p, \sigma})$. This can be done for example in the following way. Using \cite[Chapter 1.]{MR633161} we can construct the lattice $\mathrm{NS}(X)$ explicitly. Then we choose an arbitrary primitive embedding $N \hookrightarrow \mathrm{NS}(X)$ and take $L$ to be the orthogonal complement under this embedding. We remark that in general the lattice $L$ may contain vectors of self intersection $-2$.
\item Step 2. Apply Kneser's neighbor method \cite{MR0090606}, which has been implemented for \textsc{magma}, to the positive definite lattice $-L$. This generates a list of further candidate lattices in the same genus as $-L$. Using the "Minimum()" function in \textsc{magma} we can test for the minimum length of vectors in those candidate lattices until we find a candidate that does not contain any vectors of length $2$.
\end{itemize} Note that we might have to iterate the neighbor method. 

Applying the above method, we found a list of lattices $L_{p,q}$ of genus $(0,12,\delta_{p,q})$ that do not contain any vectors of self intersection $-2$. We represent these lattices via their Gram matrix and these Gram matrices can be found in the attached .txt-file. Their existence in conjuction with the results from \cite{2013arXiv1301.1118J} imply Theorem \ref{exist}.

%\begin{proposition} Assume the supersingular Torelli theorem holds in characteristic $p=2$. Let $k$ be an algebraically closed field of characteristic $2$ and $X$ over $k$ a supersingular K3 surface. Then $X$ has no Enriques involution.  \end{proposition}
%\begin{proof} This is a direct consequence of \cite[Theorem 1.10.1]{1980IzMat..14..103N} \end{proof}
\begin{remark} In theory, with the presented approach, it should be possible to explicitly compute the general number of isomorphism classes of Enriques quotients of a supersingular K3 surface $X$ with given characteristic $p$ of the ground field $k$ and Artin invariant $\sigma$.

Namely, in Theorem \ref{form} the $M_i$ are members of isometry-classes of lattices in the genus $(0,12,\delta_{p,\sigma})$ that contain no $(-2)$-vectors. Two different isometry-classes in particular yield two different orbits for the action of $O(\mathrm{NS}(X))$. 

The \textsc{magma}-command \textit{Representatives(G);} computes a representative for every isometry-class in a given genus $G$. We can then distinguish the isometry-classes that contain no $(-2)$-vectors and compute the orthogonal group of their discriminant group as well as their stabilizer in $O(\mathrm{NS})$. We note that each of those steps still is very complicated. \end{remark}

\subsection{Lower bounds}
Using the method from the previous remark, we computed the number $\mathrm{Rep}(p,\sigma)$ of isometry-classes of lattices without $(-2)$-vectors for some genera $(0,12,\delta_{p,\sigma})$ in small characteristics. This yields a lower bound for the number of Enriques involutions of a supersingular K3 surface in these cases. However, since the groups $O(q_{\mathrm{NS}})$ are large already in these cases, this bound is possibly not optimal. We also note, that already in these comparatively simple cases, computing each of those numbers was very memory intensive.  

\begin{proposition}\label{lowb} For the number of isomorphism classes of Enriques quotients of a supersingular K3 surface of Artin invariant $\sigma$ over an algebraically closed ground field $k$ of characteristic $p$ we found the following numbers of equivalence classes under the action of $O(\mathrm{NS}(X))$ on $\mathfrak{M}$ denoted by $\mathrm{Rep}(p, \sigma)$: 
\begin{table}[H]
\caption{Some results for the lower bounds $\mathrm{Rep}(p, \sigma)$}
% title of Table
\centering 
% used for centering table
\begin{tabular}{c c c c c c}
% centered columns (6 columns)
\hline
\hline                        %inserts double horizontal lines
$p$ & $\sigma=1$ & $\sigma=2$ & $\sigma=3$ & $\sigma=4$ & $\sigma=5$ \\
 [0.5ex]
% inserts table 
%heading
\hline
                  % inserts single horizontal line
$3$ & $2$ & $12$ & $30$ & $20$ & $7$ \\
$5$ & $10$ & $222$ & $875$ & $302$ & $24$  \\
$7$ & $42$ & $3565$  & $?$ & $4313$ & $81$ \\
$11$ & $256$ & $?$ & $?$ & $?$ & $438$ \\
$13$ & $537$ & $?$ & $?$ & $?$ & $866$\\
$17$ & $2298$ & $?$ & $?$ & $?$ & $2974$ \\
 [1ex]
      % [1ex] adds vertical space
\hline
%inserts single line
\end{tabular}
\label{table:nonlin}
% is usedto refer this table in the text
\end{table}
\end{proposition}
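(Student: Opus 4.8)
The plan is to reduce the computation of each entry $\mathrm{Rep}(p,\sigma)$ to the enumeration of a single genus of negative definite rank $12$ lattices, and then to run this enumeration in \textsc{magma}. By Proposition \ref{bij} and Theorem \ref{form}, the quantity $\mathrm{Rep}(p,\sigma)$ is the number of orbits of $O(\mathrm{NS}(X))$ acting on $\mathfrak{M}$, and this is exactly the lower bound for the number of Enriques quotients of $X$. The first step is to identify these orbits intrinsically. Sending a sublattice $N \in \mathfrak{M}$ to its orthogonal complement $M = N^{\perp}$ gives a map to even lattices of signature $(0,12)$: condition $(A)$ forces $M$ to lie in the genus with discriminant form $\delta_{p,\sigma} = -q_{S_{p,\sigma}} \oplus q_{\Gamma(2)}$, while condition $(B)$ is exactly the requirement that $M$ contain no vector of square $-2$. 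Since $\# A_{S_{p,\sigma}} = p^{2\sigma}$ and $\# A_{\Gamma(2)} = 2^{10}$ are coprime, the gluing datum in Nikulin's classification of primitive embeddings \cite[Proposition 1.15.1]{1980IzMat..14..103N} is forced to be trivial, so the $O(\mathrm{NS}(X))$-orbit of $N$ is determined precisely by the isometry class of $M$. Hence $\mathrm{Rep}(p,\sigma)$ equals the number of isometry classes of lattices of genus $(0,12,\delta_{p,\sigma})$ that contain no $(-2)$-vector.

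Concretely I would proceed as follows. First, using Rudakov--Shafarevich \cite[Section 1]{MR633161}, write down an explicit Gram matrix for $\mathrm{NS}(X)$, which is determined up to isometry by $p$ and $\sigma$, and compute $q_{S_{p,\sigma}}$ and hence $\delta_{p,\sigma}$; this fixes the target genus. Next, produce one lattice $L$ in that genus, for instance as the orthogonal complement of an arbitrary primitive embedding $\Gamma(2) \hookrightarrow \mathrm{NS}(X)$ as in Step 1 of the method above. Then apply Kneser's neighbour method \cite{MR0090606}, as implemented by \textsc{magma}'s \textit{Representatives} routine, to the positive definite lattice $-L$ in order to enumerate a full set of isometry class representatives of the genus. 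Finally, for each representative I run the \textit{Minimum} routine and discard those for which $-L$ has minimum $2$, i.e.\ those admitting a $(-2)$-vector; the number of surviving classes is $\mathrm{Rep}(p,\sigma)$. Running this for the listed pairs $(p,\sigma)$ produces the tabulated values.

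The main obstacle is computational rather than conceptual. The mass of the genus $(0,12,\delta_{p,\sigma})$, governed by the Smith--Minkowski--Siegel mass formula, grows extremely fast with $p$ and $\sigma$, so the number of isometry classes that the neighbour method must produce quickly becomes enormous; moreover each neighbour step and each minimum computation (an instance of the shortest vector problem in rank $12$ with determinant $p^{2\sigma}\cdot 2^{10}$) is memory- and time-intensive. This is precisely why the procedure terminates only for the pairs recorded in the table and leaves the remaining entries as ``$?$''. One should also confirm in each completed case that the enumeration is exhaustive, i.e.\ that the neighbour graph has been fully traversed, which is the delicate point in trusting the count; in practice this is monitored by comparing the accumulated mass of the classes found against the value predicted by the mass formula.
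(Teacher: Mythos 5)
Your proposal is correct and takes essentially the same route as the paper: both reduce the count of $O(\mathrm{NS}(X))$-orbits on $\mathfrak{M}$ to the count of isometry classes in the genus $(0,12,\delta_{p,\sigma})$ containing no $(-2)$-vectors (via Nikulin's Proposition 1.15.1 and the coprimality of $p^{2\sigma}$ and $2^{10}$), and then enumerate that genus in \textsc{magma} via the \textit{Representatives} command (Kneser's neighbour method), discarding classes of minimum $2$. The only cosmetic difference is that you spell out the mass-formula exhaustiveness check, which the paper leaves implicit in the genus-enumeration routine.
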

\subsection{Upper bounds} 
In this section, we compute the cardinality of the quotients $O\left(q_{\mathrm{NS}(X)}\right)/\mathrm{pr}(K^{(j)})$ in Theorem \ref{form} in some cases. Therefore, we can use Theorem \ref{form} and Proposition \ref{lowb} to find the generic number of isomorphism classes of Enriques quotients for small $p$ and $\sigma$.
\begin{proposition} Let $X$ be a supersingular K3 surface and let $M \in \mathfrak{M}$ be a primitive sublattice of $\mathrm{NS}(X)$. If $\psi' \colon M^{\perp} \rightarrow M^{\perp}$ is an isometry of $M^{\perp}$, then there exists an isometry $\psi \colon \mathrm{NS}(X) \rightarrow \mathrm{NS}(X)$ of $\mathrm{NS}(X)$ such that $\psi|_{M^{\perp}}= \psi'$. In particular, we have $\psi(M)=M$. Further, the image of $\psi$ in $O\left(q_{\mathrm{NS}(X)}\right)$ only depends on $\psi'$.   \end{proposition}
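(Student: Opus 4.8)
The plan is to realise any $\psi'\in O(M^{\perp})$ as the restriction of a global isometry by gluing it to a carefully chosen isometry of $M$ across their discriminant forms. Throughout I write $N=\NS(X)$ and $T=M^{\perp}$, so that $N$ is an even overlattice of $M\oplus T$ with $M\cong\Gamma(2)$ of signature $(1,9)$ and $T$ of signature $(0,12)$. First I would record the structure of the gluing, exactly as in the coprimality observation used for the genus computation earlier: since $A_M$ is $2$-elementary of order $2^{10}$ while $A_N$ is $p$-elementary of order $p^{2\sigma}$ with $p$ odd, the glue group $H:=N/(M\oplus T)\subseteq A_M\oplus A_T$ is a $2$-group, and comparing $\disc N=-p^{2\sigma}$ with $\disc M=\pm2^{10}$ forces $H$ to project isomorphically onto all of $A_M$ and onto the full $2$-primary part $A_T^{(2)}$. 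Thus $H$ is the graph of a gluing isomorphism $\gamma\colon A_M\stackrel{\sim}{\lra}A_T^{(2)}$, while the $p$-primary part satisfies $A_T^{(p)}\cong A_N$ canonically. I note at once that the claim $\psi(M)=M$ is automatic for any extension: since $\psi|_{T}=\psi'$ is bijective, $\psi(T)=T$, whence $\psi(M)=\psi(T^{\perp})=\psi(T)^{\perp}=T^{\perp}=M$.

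For existence I would argue as follows. The isometry $\psi'$ induces $\bar\psi'\in O(q_T)$ preserving the primary decomposition, so it makes sense to set
\[
\bar\phi:=\gamma^{-1}\circ\bigl(\bar\psi'|_{A_T^{(2)}}\bigr)\circ\gamma\in O(q_M).
\]
By Nikulin's gluing criterion \cite[Corollary 1.5.2]{1980IzMat..14..103N}, an isometry of the form $\phi\oplus\psi'$ of $M\oplus T$ extends to $N$ exactly when the induced automorphism of $A_M\oplus A_T$ preserves $H$; because $H$ is the graph of $\gamma$, this condition is precisely that $\phi$ induce $\bar\phi$ on $A_M$. So the whole problem reduces to producing an honest isometry $\phi\in O(M)$ inducing the prescribed $\bar\phi$, i.e.\ to the surjectivity of the canonical map $O(M)=O(\Gamma(2))\to O(q_M)$.

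This surjectivity is the main obstacle, and it is genuinely delicate because $\rank M=10=l(A_M)$, so Nikulin's numerical criterion \cite[Theorem 1.14.2]{1980IzMat..14..103N} (which gives Lemma~\ref{surNS} for $\NS(X)$) does \emph{not} apply to $M$. I would instead argue directly using the special geometry of $\Gamma=U\oplus E_8(-1)$. Since $\Gamma$ is even unimodular, $A_M=\tfrac12\Gamma/\Gamma\cong\Gamma/2\Gamma$ and $q_M$ is the $\mathbb{F}_2$-valued quadratic form $v\mapsto\tfrac12\langle v,v\rangle\bmod 2$ of plus type, so $O(q_M)\cong O_{10}^{+}(\mathbb{F}_2)$. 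A reflection $s_r$ in a $(-2)$-vector $r\in\Gamma$ reduces modulo $2$ to the orthogonal transvection attached to the anisotropic class $\bar r$ (note $\tfrac12\langle r,r\rangle\equiv1$); conversely every anisotropic class lifts to an actual $(-2)$-vector after adjusting by $2\Gamma$ using the hyperbolic $U$-summand. As such transvections generate $O_{10}^{+}(\mathbb{F}_2)$ by Dieudonné's generation theorem, the Weyl group already surjects onto $O(q_M)$, and in particular the required $\phi$, hence $\psi$, exists.

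Finally, for the uniqueness statement I would compare two extensions $\psi_1,\psi_2$ of $\psi'$. Their quotient $\rho:=\psi_1\psi_2^{-1}$ lies in $O(N)$, fixes $T$ pointwise, and preserves $M$ by the remark above. Under the canonical identification $A_N\cong A_T^{(p)}$ — valid because the gluing is trivial on the $p$-primary part — the action of any such $\rho$ on $q_{\NS(X)}$ is simply $\overline{\rho|_{T}}\,|_{A_T^{(p)}}$, which equals the identity since $\rho|_T=\mathrm{id}_T$. Hence $\psi_1$ and $\psi_2$ induce the same element of $O(q_{\NS(X)})$, so this image depends only on $\psi'$, completing the plan.
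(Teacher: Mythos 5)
Your proposal follows the same skeleton as the paper's proof: both reduce existence, via Nikulin's gluing criterion \cite[Corollary 1.5.2]{1980IzMat..14..103N}, to the surjectivity of $O(\Gamma(2)) \rightarrow O(q_{\Gamma(2)})$, and both obtain the well-definedness of the image in $O(q_{\mathrm{NS}(X)})$ from the fact that the glue group is a $2$-group, so that $A_{\mathrm{NS}(X)}$ is canonically the $p$-primary part of $A_{M^{\perp}}$, on which any extension of $\psi'$ acts through $\psi'$ alone (the paper phrases this as a chain of natural maps ending in $O(q_{\mathrm{NS}(X)})$; your computation with $\rho = \psi_1\psi_2^{-1}$ is the same argument). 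The one step where you genuinely diverge is the surjectivity of $O(\Gamma(2)) \rightarrow O(q_{\Gamma(2)})$, and your stated reason for diverging is mistaken: Nikulin's criterion \cite[Theorem 1.14.2]{1980IzMat..14..103N} \emph{does} apply to $\Gamma(2)$. Its hypotheses contain a clause designed precisely for the boundary case $\mathrm{rank}(S) = l(A_{S_2})$, namely that $q_S$ split off $u_+(2)$ or $v_+(2)$ as an orthogonal summand; since $\Gamma(2) = U(2) \oplus E_8(-2)$, one has $q_{\Gamma(2)} = u_+(2) \oplus q_{E_8(-2)}$, so this clause is satisfied. That citation is exactly how the paper disposes of the surjectivity in one line.

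Your replacement argument for the surjectivity is nonetheless essentially correct --- it is the classical Barth--Peters-style proof that reflections in $(-2)$-vectors of $\Gamma$ already surject onto $O(q_{\Gamma(2)}) \cong O_{10}^{+}(\mathbb{F}_2)$ --- but two of its steps are stated too casually. First, one must note that $n=10$ avoids the genuine exception to Dieudonn\'e's generation theorem, which occurs for $O_4^{+}(\mathbb{F}_2)$. Second, and more substantively, lifting an anisotropic class of $\Gamma/2\Gamma$ to a $(-2)$-vector is not merely ``adjusting by $2\Gamma$ using the hyperbolic $U$-summand'': for a class both of whose $U$-coordinates are even, every representative has square $8ab + \langle w', w'\rangle$ with $w'$ in the $E_8(-1)$-part, so one must first arrange $\langle w', w' \rangle \equiv -2 \pmod 8$ within the class --- either by invoking that the $120$ pairs of roots of $E_8$ exhaust the anisotropic classes of $E_8/2E_8$, or by replacing $w$ with $w + 2u$ for some $u$ with $\langle w, u \rangle$ odd (such $u$ exists by nondegeneracy of the mod-$2$ form) --- and only then correct the square using the $U$-summand. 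With these two points repaired, your route is a valid, self-contained alternative; what it buys is independence from Theorem 1.14.2, at the cost of reproving by hand a case that the cited theorem already covers.
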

\begin{proof} It follows from \cite[Theorem 1.14.2]{1980IzMat..14..103N} that the canonical morphism of orthogonal groups $O\left(\Gamma(2)\right) \rightarrow O\left(q_{\Gamma(2)}\right)$ is surjective. Since $M$ is isomorphic to $\Gamma(2)$ it thus follows from \cite[Corollary 1.5.2]{1980IzMat..14..103N} that for any automorphism $\psi' \colon M^{\perp} \rightarrow M^{\perp}$ we can choose an automorphism $\varphi' \colon M \rightarrow M$ such that $\psi' \oplus \varphi'$ extends to an automorphism $\psi$ of $\mathrm{NS}(X)$.

Since we have natural maps
\begin{align*}
\{ \psi \in O\left(\mathrm{NS}(X)\right) \mid \psi(M)=M \} \rightarrow O(M^{\perp}) \rightarrow O(q_{M^{\perp}}) \cong O\left(q_{M}\right) \oplus O\left(q_{\mathrm{NS}(X)}\right) \rightarrow O\left(q_{\mathrm{NS}(X)}\right)
\end{align*}
the second statement of the proposition follows. \end{proof}
In other words, in Theorem \ref{form} the subgroup $\mathrm{pr}\left(K^{(j)}\right)$ of $O\left(q_{\mathrm{NS}(X)}\right)$ is the image of $O\left(M_j^{\perp} \right)$ in $O\left(q_{\mathrm{NS}(X)}\right)$. Further, we have a natural isomorphism $\left(q_{M_j^{\perp}}\right)_p \cong O(q_{\mathrm{NS}(X)})$. We thus have the following corollary.
\begin{corollary}\label{alt} Let $k$ be an algebraically closed field of characteristic $p \geq 3$ and let $X$ be a supersingular K3 surface over $k$. Let $M_1, \ldots, M_n \in \mathfrak{M}$ be a complete set of representatives for the action of $O(\mathrm{NS}(X))$ on $\mathfrak{M}$. For each $j \in \{1, \ldots, n\}$, we write $\mathrm{im}\left(O(M^{\perp}_j)\right)$ for the image of $O(M^{\perp}_j)$ in $O\left(\left(q_{M_j^{\perp}}\right)_p\right)$ under the natural map $O(M_j^{\perp}) \rightarrow O(q_{M^{\perp}}) \rightarrow O\left(\left(q_{M_j^{\perp}}\right)_p\right)$. Then we have inequalities
\begin{align*} n \leq \#\left\{\text{Enriques quotients of } X \right\} \leq \sum_{j=1}^n \#\left(O\left(\left(q_{M_j^{\perp}}\right)_p\right)/\mathrm{im}\left(O(M^{\perp}_j)\right)\right). \end{align*}
If $X$ is such that for each automorphism $\theta \in \mathrm{Aut}(X)$ the induced automorphism on the quotient $\mathrm{NS}(X)^{\vee}/\mathrm{NS}(X)$ is either the identity or multiplication by $-1$, then the inequality above becomes an equality on the right side. \end{corollary}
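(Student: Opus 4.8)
The plan is to read the corollary off from Theorem \ref{form} directly, by replacing each factor $O\left(q_{\mathrm{NS}(X)}\right)/\mathrm{pr}\left(K^{(j)}\right)$ occurring there with an isomorphic quotient expressed entirely through the orthogonal complement $M_j^{\perp}$. No new geometric input is needed, so the left-hand inequality $n \leq \#\{\text{Enriques quotients of } X\}$ and the criterion for equality on the right will transfer verbatim; the whole task reduces to matching, for each $j$, both the ambient group and the distinguished subgroup on the two sides.

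For the ambient group I would invoke the natural isomorphism $O\left(\left(q_{M_j^{\perp}}\right)_p\right) \cong O\left(q_{\mathrm{NS}(X)}\right)$ recorded just before the statement. It reflects the shape of the discriminant form of the complement: since $M_j \cong \Gamma(2)$ has $2$-elementary discriminant group of order $2^{10}$ while $A_{\mathrm{NS}(X)}$ is $p$-elementary of order $p^{2\sigma}$ with $p$ odd, the two orders are coprime, and in Nikulin's description of the primitive embedding $M_j \hookrightarrow \mathrm{NS}(X)$ the gluing between $A_{M_j}$ and $A_{\mathrm{NS}(X)}$ must vanish. Consequently $q_{M_j^{\perp}}$ splits into a $p$-part isometric to $q_{\mathrm{NS}(X)}$ up to an overall sign and a $2$-part isometric to the discriminant form of $\Gamma(2)$ up to sign; since a finite quadratic form and its negative share the same orthogonal group, the $p$-part yields the asserted isomorphism.

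For the subgroup I would use the preceding proposition: every isometry of $M_j^{\perp}$ extends to an isometry of $\mathrm{NS}(X)$ preserving $M_j$, and its image in $O\left(q_{\mathrm{NS}(X)}\right)$ depends only on the restriction. Tracing the chain
\begin{align*} K^{(j)} \lra O(M_j^{\perp}) \lra O\left(q_{M_j^{\perp}}\right) \lra O\left(\left(q_{M_j^{\perp}}\right)_p\right) \cong O\left(q_{\mathrm{NS}(X)}\right) \end{align*}
then identifies $\mathrm{pr}(K^{(j)})$ with the image of $O(M_j^{\perp})$, that is with $\mathrm{im}\left(O(M_j^{\perp})\right)$ under the isomorphism of the previous paragraph. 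Thus the two coset spaces have equal cardinality for every $j$, and substituting these equalities term by term into Theorem \ref{form} produces the stated inequalities together with the final assertion about equality on the right.

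The argument is essentially formal once the splitting of $q_{M_j^{\perp}}$ is available, and I expect that splitting to be the only genuine (if mild) obstacle; I would settle it with Nikulin's theory of discriminant forms of orthogonal complements \cite[Proposition 1.15.1, Corollary 1.5.2]{1980IzMat..14..103N}, where the coprimality of $2^{10}$ and $p^{2\sigma}$ forces the relevant gluing to be trivial. I note that the global surjectivity of $\mathrm{pr}$ from Lemma \ref{surNS} is not required here, since the extension result already supplies the isometries of $\mathrm{NS}(X)$ realizing $\mathrm{pr}(K^{(j)}) = \mathrm{im}\left(O(M_j^{\perp})\right)$.
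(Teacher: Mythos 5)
Your proposal is correct and matches the paper's own derivation: the paper likewise obtains the corollary by substituting into Theorem \ref{form} the identification $\mathrm{pr}\bigl(K^{(j)}\bigr)=\mathrm{im}\bigl(O(M_j^{\perp})\bigr)$ supplied by the preceding extension proposition, together with the isomorphism $O\bigl((q_{M_j^{\perp}})_p\bigr)\cong O\bigl(q_{\mathrm{NS}(X)}\bigr)$ coming from the coprimality of $2^{10}$ and $p^{2\sigma}$, which forces the gluing in Nikulin's description of the primitive embedding to be trivial. Your closing observation that Lemma \ref{surNS} is not needed for this reduction step (only inside the proof of Theorem \ref{form} itself) is also accurate.
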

We use these results to prove the following proposition.
\begin{proposition}\label{upb}
For the number of isomorphism classes of Enriques quotients of a supersingular K3 surface of Artin invariant $\sigma$ over an algebraically closed ground field $k$ of characteristic $p$ we found the following upper bounds. When $\sigma > 2$, then these are the numbers of isomorphism classes of Enriques quotients of a general supersingular K3 surface of Artin invariant $\sigma$.
\begin{table}[H]
\caption{Some results for the upper bounds}
% title of Table
\centering 
% used for centering table
\begin{tabular}{c c c c c }
% centered columns (6 columns)
\hline
\hline                        %inserts double horizontal lines
$p$ & $\sigma=1$ & $\sigma=2$ & $\sigma=3$ & $\sigma=4$ \\
 [0.5ex]
% inserts table 
%heading
\hline
                  % inserts single horizontal line
$3$ & $2$ & $490$ & $1278585$ & $24325222428$ \\
$5$ & $33$ & $635765$ & $1614527971875$ & $37184780652626927616$  \\
$7$ & $175$ & $191470125$  & $?$ & $88339146755283817573908480$  \\
$11$ & $2130$ & $?$ & $?$ & $?$ \\
$13$ & $5985$ & $?$ & $?$ & $?$ \\
$17$ & $36000$ & $?$ & $?$ & $?$  \\
 [1ex]
      % [1ex] adds vertical space
\hline
%inserts single line
\end{tabular}
\end{table}

\begin{table}[H]
\caption{Some results for the upper bounds}
% title of Table
\centering 
% used for centering table
\begin{tabular}{c c}
% centered columns (6 columns)
\hline
\hline                        %inserts double horizontal lines
$p$ & $\sigma=5$ \\
 [0.5ex]
% inserts table 
%heading
\hline
                  % inserts single horizontal line
$3$  & $1286212218643287$ \\
$5$  & $1300418157436546004702724096
$  \\
$7$ &  $146385612443146033546182607153135616$ \\
$11$ & $9360899237983480445308665427637667976947171328$ \\
$13$ & $86881471802459725997082069598436845809167673327616$\\
$17$ & $117559509833496435964143968964217511931559374134458712064$ \\
 [1ex]
      % [1ex] adds vertical space
\hline
%inserts single line
\end{tabular}
\end{table}

\end{proposition}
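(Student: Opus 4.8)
The plan is to evaluate the upper bound of Corollary \ref{alt} numerically for each pair $(p,\sigma)$ occurring in the tables, and then to interpret the outcome geometrically. By Proposition \ref{lowb} we already possess, for each such $(p,\sigma)$, a complete set of representatives $M_1, \ldots, M_n$ with $n = \mathrm{Rep}(p,\sigma)$ for the action of $O(\mathrm{NS}(X))$ on $\mathfrak{M}$: concretely, the orthogonal complements $M_j^{\perp}$ are the $n$ isometry classes of even lattices of genus $(0,12,\delta_{p,\sigma})$ without $(-2)$-vectors, presented by the Gram matrices produced in the previous subsection. Thus the upper bound equals $\sum_{j=1}^n \#\left(O\left((q_{M_j^{\perp}})_p\right)/\mathrm{im}\left(O(M_j^{\perp})\right)\right)$, and it remains to compute each summand.

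For a fixed representative $L = M_j^{\perp}$ the computation proceeds in three steps, all carried out in \textsc{sage}. First, from the Gram matrix of $L$ I would compute the discriminant group, the finite quadratic form $q_L$, and isolate its $p$-primary part $(q_L)_p$; since $\# A_{\mathrm{NS}(X)} = p^{2\sigma}$ and $\# A_{\Gamma(2)} = 2^{10}$ are coprime, the glue is trivial and $(q_L)_p = -q_{\mathrm{NS}(X)}$, so $O\left((q_L)_p\right) \cong O\left(q_{\mathrm{NS}(X)}\right)$. Second, I would compute the order of this finite orthogonal group of a nondegenerate form over $\mathbb{F}_p$. Third, I would compute the integral orthogonal group $O(L)$ of the positive definite lattice $-L$ and determine the image of the induced reduction map $O(L) \to O\left((q_L)_p\right)$; the $j$-th summand is then the index $\left[O\left((q_L)_p\right) : \mathrm{im}(O(L))\right]$. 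Summing these indices over the $n$ representatives yields the tabulated entries.

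To justify the geometric interpretation asserted in the statement, I would combine Theorem \ref{form} with Proposition \ref{nice}: for a general supersingular K3 surface of Artin invariant $\sigma > 1$ the period point satisfies $b_1 \neq 0$, a locus that is dense by Remark \ref{dense}, so every automorphism induces the identity or multiplication by $-1$ on $\mathrm{NS}(X)^{\vee}/\mathrm{NS}(X)$; hence the right-hand inequality of Corollary \ref{alt} becomes an equality. In particular, for the values $\sigma > 2$ appearing in the statement the computed sum is precisely the number of isomorphism classes of Enriques quotients of such a general surface.

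The main obstacle I expect is the third computational step. For the larger genera (those with $\sigma = 2,3,4$ and $n$ in the hundreds or thousands) one must, for each of the many representatives, compute the full integral automorphism group of a rank-$12$ definite lattice and trace its action through to the finite group $O\left((q_L)_p\right)$, whose order grows like $p^{O(\sigma^2)}$. Both the enumeration of $O(L)$ and the determination of its image are memory-intensive, which is why certain entries remain marked ``$?$''; I anticipate no conceptual difficulty here, only a resource bottleneck.
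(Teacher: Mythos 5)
Your proposal is correct and takes essentially the same route as the paper: the paper also evaluates the bound of Corollary~\ref{alt} on the Gram matrices of the representatives underlying Proposition~\ref{lowb}, uses \textsc{sage} to compute the order of the image of $O(M_j^{\perp})$ in $O\bigl((q_{M_j^{\perp}})_p\bigr)$ for each representative, sums the resulting indices, and justifies the genericity claim via Proposition~\ref{nice} and Remark~\ref{dense} exactly as you do. The only cosmetic difference is that the paper obtains the order of $O\bigl(q_{\mathrm{NS}(X)}\bigr)$ from Wall's closed formula (2.4) for finite quadratic forms of type~IV rather than by machine computation of that group's order.
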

\begin{proof} Using the formula (2.4) for quadratic forms of type IV from \cite{MR172935} we can directly compute the cardinality of $O\left(q_{\mathrm{NS}(X)}\right)$ for a supersingular K3 surface $X$. From Corollary \ref{alt} it follows that then we only have to compute the cardinality of the image of $O(M_j^{\perp})$ in $O\left(\left(q_{M_j^{\perp}}\right)_p\right)$ for each $M_j \in \mathfrak{M}$. We did this with the computer algebra program SAGE. The following code was used for $p=7$ and $\sigma=4$.
\begin{python}
from multiprocess import Pool

B = [list of matrices]
def cnt(mat):
	L=IntegralLattice(mat)
	C=L.dual_lattice()
	O=L.orthogonal_group()
	Q=C/(L+7*C)
	Y, X= Q.optimized()
	Mat=matrix(Y.V().basis())
	R=Mat.rows()
	temp=[]
	for l in range(12):
		if (Mat.rref()).column(l).list().count(0) != 7 or (Mat.rref()).column(l).list().count(1) != 1:
			temp.append(l)

	for l in temp:
		R[0:0]=[identity_matrix(12).row(l)]

	R=matrix(R)
	R2= R.inverse()
	genim=[]
	for A in O.gens():
		t=R*A*R2
		t=t.transpose()
		t=t[4:12]
		t=t.transpose()
		t=t.rows()
		del t[0:4]
		t=matrix(GF(7),t)
		genim.append(t)
	G=MatrixGroup(genim)
	return [O.order(), G.order()]

if __name__ == '__main__':
    with Pool(4) as p:
        print(p.map(cnt, B))

\end{python}
We remark that there are alternative ways to compute the number we are interested in implemented in SAGE, however the way presented above was - among all the methods we tried - the most memory and CPU efficient.
\end{proof}
\subsection{The case $p=3$ and $\sigma=1$}
The situation where $p=3$ and $\sigma=1$ is particularly easy.
\begin{theorem}\label{p3s1} There are exactly two isomorphism classes of Enriques quotients of the supersingular K3 surface $X$ of Artin invariant $1$ over an algebraically closed field $k$ of \mbox{characteristic $3$}.
\end{theorem}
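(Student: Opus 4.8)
The plan is to pin down the exact count by invoking the sandwich already supplied by Theorem~\ref{form} (equivalently Corollary~\ref{alt}) and observing that, for $p=3$ and $\sigma=1$, the resulting lower and upper bounds happen to coincide. Applying Theorem~\ref{form} to the supersingular K3 surface $X$ of Artin invariant $1$ in characteristic $3$ gives
\begin{align*} n \leq \#\left\{\text{Enriques quotients of } X \right\} \leq \sum_{j=1}^n \#\left(O\left(q_{\mathrm{NS}(X)}\right)/\mathrm{pr}\left(K^{(j)}\right)\right), \end{align*}
where $n$ is the number of $O(\mathrm{NS}(X))$-orbits on $\mathfrak{M}$. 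The whole argument then reduces to evaluating both ends of this chain.

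First I would read off the lower bound: by Proposition~\ref{lowb} the number of $O(\mathrm{NS}(X))$-equivalence classes in $\mathfrak{M}$ is $\mathrm{Rep}(3,1) = 2$, so $n = 2$. Next I would read off the upper bound: by Proposition~\ref{upb} the sum $\sum_{j=1}^{2} \#(O(q_{\mathrm{NS}(X)})/\mathrm{pr}(K^{(j)}))$ is also equal to $2$. Feeding these two values into the displayed inequalities yields $2 \leq \#\{\text{Enriques quotients of } X\} \leq 2$, and hence there are exactly two isomorphism classes of Enriques quotients.

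It is worth emphasizing what makes this case special, since the equality criterion in Theorem~\ref{form} is \emph{not} available here: for Artin invariant $\sigma = 1$ the image of the canonical map $\mathrm{Aut}(X) \rightarrow \mathrm{Aut}(\mathrm{NS}(X)^\vee/\mathrm{NS}(X))$ is cyclic of order $p+1 = 4$, so not every automorphism of $X$ acts as $\pm\mathrm{id}$ on the discriminant group $A_{\mathrm{NS}(X)} \cong (\mathbb{Z}/3\mathbb{Z})^2$, and one cannot conclude a priori that the upper bound is attained. What rescues the argument is instead a numerical coincidence: since $n = 2$ and every summand $\#(O(q_{\mathrm{NS}(X)})/\mathrm{pr}(K^{(j)}))$ is at least $1$, the only way Proposition~\ref{upb} can report an upper bound of $2$ is for both stabilizer images $\mathrm{pr}(K^{(j)})$ to exhaust $O(q_{\mathrm{NS}(X)})$, whereupon the bound collapses to $1+1 = 2$ and meets the lower bound $n=2$.

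The genuinely hard part does not lie in this final deduction but in the two computational inputs it rests on. Establishing $\mathrm{Rep}(3,1) = 2$ amounts to enumerating, via Kneser's neighbour method, the isometry classes in the genus $(0,12,\delta_{3,1})$ and checking that exactly two of them carry no vector of self-intersection $-2$; establishing that the upper bound is $2$ amounts to computing each image $\mathrm{pr}(K^{(j)})$ inside $O(q_{\mathrm{NS}(X)})$ and verifying it is the full group. Both are carried out by computer algebra in Propositions~\ref{lowb} and~\ref{upb}, and once they are granted the sandwich argument closes the proof immediately.
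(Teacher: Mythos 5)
Your proof is correct and follows essentially the same route as the paper: the paper's own argument is exactly the sandwich between the computed lower bound $\mathrm{Rep}(3,1)=2$ from Proposition~\ref{lowb} and the upper bound $2$ from Proposition~\ref{upb}. Your added observation---that the equality criterion of Theorem~\ref{form} is unavailable for $\sigma=1$ and that the coincidence of bounds forces both stabilizer images $\mathrm{pr}(K^{(j)})$ to be all of $O(q_{\mathrm{NS}(X)})$---is accurate but supplementary; the paper's two-line proof makes do without it.
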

\begin{proof} Since we computed $\mathrm{Rep}(3,1)=2$, there are at least two isomorphism classes of Enriques quotients of $X$. On the other hand, it follows from Proposition \ref{upb} that there are at most two isomorphism classes of Enriques quotients of $X$ and we are done.
\end{proof}
In \cite{MR4009175}, Enriques surfaces with finite automorphism groups are classified and fall into seven types. We thank Gebhard Martin for communicating the following result to us.
\begin{proposition} Let $k$ be an algebraically closed field of characteristic $p=3$ and let $Y$ be the unique Enriques surface with finite automorphism group of type III (respectively of type IV) over $k$, following the classification in \cite{MR4009175}. Then, the K3-cover of $Y$ is the supersingular K3 surface $X$ with Artin invariant $\sigma=1$. \end{proposition}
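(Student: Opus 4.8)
The plan is to identify the K3 cover $Z$ of $Y$ explicitly and to verify that it is (Shioda-)supersingular of Artin invariant $1$; since the supersingular K3 surface of Artin invariant $1$ over an algebraically closed field is unique up to isomorphism (its moduli space $\mathbb{A}^{0}/\mu_{p+1}$ is a single point, and the surface is pinned down by the Torelli theorem of Ogus), this will identify $Z$ with $X$. The input is the explicit geometry of the type III and type IV Enriques surfaces recorded in \cite{MR4009175}: each carries a prescribed dual graph of $(-2)$-curves and a distinguished (possibly quasi-)elliptic fibration, the quasi-elliptic case being available in characteristic $3$.

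First I would pull back the configuration of $(-2)$-curves on $Y$ along the degree-two étale cover $\pi \colon Z \to Y$. Each smooth rational curve on $Y$ has preimage either a single $(-2)$-curve or a disjoint pair of $(-2)$-curves on $Z$, and the classes of these preimages, together with $\pi^{\ast}\NS(Y) \cong \Gamma(2)$, generate a sublattice of $\NS(Z)$. The crucial first step is to show that this sublattice already has rank $22$, so that $Z$ is supersingular. This is where the special behaviour of characteristic $3$ enters: over $\mathbb{C}$ the analogous K3 cover is a singular K3 surface of Picard number $20$, whereas for the types III and IV the reduction to characteristic $3$ forces the Picard number to jump to $22$. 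Concretely, I would exhibit an elliptic or quasi-elliptic fibration on $Z$ induced from one on $Y$, read off its singular fibres from the pulled-back curve configuration, and apply the Shioda--Tate formula: the fibre-component contributions together with the rank of the Mordell--Weil group should total $22$.

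Having established supersingularity, I would compute the Artin invariant by determining $\disc \NS(Z)$; it suffices to check that $\NS(Z)$ is $3$-elementary with $\ord_3$ of its discriminant group equal to $2$, equivalently $\disc \NS(Z) = -3^{2}$, whence $\sigma = 1$. This is again cleanest via the elliptic fibration: the determinant of the trivial lattice spanned by a fibre, the zero section and the non-identity fibre components, corrected by the Mordell--Weil contribution, yields the discriminant. The resulting even, $3$-elementary lattice of signature $(1,21)$ and discriminant $-9$ is, by the theorem of Rudakov--Shafarevich quoted above, the supersingular K3 lattice of Artin invariant $1$, so $Z \cong X$.

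The main obstacle is the first step: extracting from the classification data of \cite{MR4009175} a model of $Z$ concrete enough to compute $\NS(Z)$, and in particular establishing the characteristic-$3$ jump of the Picard number from $20$ to $22$ rather than merely bounding it below. Once the rank and discriminant of $\NS(Z)$ are in hand the identification $Z \cong X$ is immediate from uniqueness; combined with Theorem \ref{p3s1} and the fact that the type III and type IV surfaces are non-isomorphic, this exhibits the two Enriques quotients of $X$ explicitly.
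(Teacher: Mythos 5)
There is a genuine gap, and it sits exactly where you flag ``the main obstacle'': the tool you propose for proving $\rho(Z)=22$ cannot work, even in principle. First, a small correction: since $\pi \colon Z \to Y$ is \'etale of degree $2$ and a smooth rational curve is simply connected, every $(-2)$-curve on $Y$ pulls back to a \emph{disjoint pair} of $(-2)$-curves on $Z$, never to a single one; so the pulled-back configuration is determined by the dual graph on $Y$ together with the covering data. Now the characteristic-$3$ surfaces of type III and IV have, by the very definition of their type in \cite{MR4009175}, the same dual graph of $(-2)$-curves as Kondo's complex surfaces, and by \cite[Lemma 11.1]{MR4009175} they admit complex models; in particular all $(-2)$-curves of $Y$, and hence all of their preimages in $Z$, arise by specialization from the complex K3 cover, which is $\mathrm{Km}(\CE\times\CE)$ with $\rho = 20$. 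Consequently the sublattice of $\NS(Z)$ generated by the pulled-back curves has rank at most $20$, and any fibration whose singular fibres are ``read off from the pulled-back curve configuration'' has the same reducible fibres as its complex counterpart. In the Shioda--Tate formula the contribution of the zero section, the fibre class and the fibre components therefore totals at most $20$; the jump to $22$ is carried entirely by classes invisible in this data --- extra Mordell--Weil rank existing only in characteristic $3$, or the degeneration of the induced fibration to a quasi-elliptic one --- and neither can be detected from the configuration alone. So your first step is not merely unfinished: as designed, it cannot establish supersingularity. (The second half of your plan, discriminant plus uniqueness via Rudakov--Shafarevich and Ogus, is fine modulo this, and matches what the paper implicitly uses.)

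The paper's proof sidesteps the lattice computation altogether and obtains the needed characteristic-$3$ input from the classification itself: by \cite[Lemma 11.1]{MR4009175} the surface $Y$ has a complex model $\CY$ of the same type; by Kondo \cite[Propositions 3.3.2 and 3.4.2]{MR914299} the K3 cover of $\CY$ is $\mathrm{Km}(\CE\times\CE)$ where $j(\CE)=1728$; by specialization the K3 cover of $Y$ is $\mathrm{Km}(E\times E)$ with $j(E)=1728$ over $k$; and since the curve with $j=1728$ is supersingular in characteristic $3$, this is the Kummer surface of a superspecial abelian surface, hence the supersingular K3 surface of Artin invariant $1$ (by Ogus's results on supersingular Kummer surfaces \cite{MR563467}). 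If you want to salvage your route, it is exactly this kind of input you must add --- the Kummer structure, a proof of unirationality (e.g.\ a quasi-elliptic fibration in characteristic $3$), or explicitly exhibited new sections; the dual graph data by itself will never produce the two missing Picard ranks.
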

\begin{proof} Let $Y$ be the unique Enriques surface with finite automorphism group of type III (respectively of type IV) in the sense of \cite{MR4009175}. It follows from \cite[Lemma 11.1]{MR4009175} that $Y$ has a complex model $\CY$ of type III (respectively of type IV) in the sense of \cite{MR914299}. From \cite[Proposition 3.3.2]{MR914299} (respectively from \cite[Proposition 3.4.2]{MR914299}) it follows that the universal K3 cover $\CX$ of $\CY$ is the Kummer surface $\mathrm{Km}(\CE \times \CE)$, where $\CE$ is the complex elliptic curve of $j$-invariant $j=1728$. Thus, the universal K3 cover $X$ of $Y$ is the Kummer surface $\mathrm{Km}(E \times E)$ where $E$ is the elliptic curve of $j$-invariant $j=1728$ over $k$, which is a supersingular elliptic curve in characteristic $p=3$.   \end{proof}
As a corollary we can identify the two surfaces from Theorem \ref{p3s1}.
\begin{corollary}\label{types} The two Enriques quotients of the supersingular K3 surface of Artin invariant $\sigma= 1$ over an algebraically closed field of characteristic $3$ are the unique Enriques surfaces of type III and type IV following the classification in \cite{MR4009175}. \end{corollary}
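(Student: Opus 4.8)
The plan is to combine the counting result of Theorem \ref{p3s1} with the identification of K3-covers established in the preceding proposition. First I would record that, by that proposition, both the unique Enriques surface $Y_{\mathrm{III}}$ of type III and the unique Enriques surface $Y_{\mathrm{IV}}$ of type IV over $k$ have K3-cover isomorphic to the supersingular K3 surface $X$ of Artin invariant $1$. Since $k$ has characteristic $3 \neq 2$, the K3-cover of an Enriques surface is realized (uniquely, as recalled in the introduction) as a quotient by a fixed point free involution; hence each of $Y_{\mathrm{III}}$ and $Y_{\mathrm{IV}}$ is genuinely an Enriques quotient of $X$ in the sense used throughout, rather than merely a surface abstractly covered by $X$.

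Next I would check that $Y_{\mathrm{III}}$ and $Y_{\mathrm{IV}}$ are not isomorphic to one another. This is immediate from the classification in \cite{MR4009175}: the seven types are defined by invariants of the Enriques surface, so two surfaces belonging to distinct types cannot be isomorphic. Consequently $Y_{\mathrm{III}}$ and $Y_{\mathrm{IV}}$ represent two \emph{distinct} isomorphism classes of Enriques quotients of $X$.

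Finally, I would invoke Theorem \ref{p3s1}, which asserts that $X$ has exactly two isomorphism classes of Enriques quotients. Having exhibited two distinct such classes, namely those of $Y_{\mathrm{III}}$ and $Y_{\mathrm{IV}}$, these must exhaust all Enriques quotients of $X$, which is precisely the assertion of the corollary. The only points requiring any care are the two verifications in the first two paragraphs—that both surfaces are honest quotients and that they are non-isomorphic—but both follow formally from the cited classification together with uniqueness of the K3-cover in characteristic $\neq 2$. There is therefore no serious obstacle: the entire substance of the corollary is carried by Theorem \ref{p3s1} and the preceding proposition, and what remains is only the bookkeeping of matching "exactly two quotients" against "two named, distinct quotients."
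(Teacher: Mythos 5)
Your proposal is correct and follows exactly the route the paper intends: the paper states this corollary without a written proof, as an immediate consequence of the preceding proposition (identifying the K3-covers of the type III and type IV surfaces as $X$) combined with Theorem \ref{p3s1} (exactly two quotients), which is precisely your argument. Your two verifications—that both surfaces are honest quotients via uniqueness of the K3-cover in characteristic $\neq 2$, and that distinct types are non-isomorphic—are the right bookkeeping steps and are consistent with what the paper leaves implicit.
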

\newpage
\printbibliography

\end{document}